\newtheorem{theorem}{Theorem}
\newtheorem{lemma}[theorem]{Lemma}
\newtheorem{corollary}{Corollary}
\newtheorem{proposition}[theorem]{Proposition}
\theoremstyle{definition}                 
\newtheorem{remark}{Remark} \newtheorem*{notation}{Notation}
\newcommand{\field}[1]{\mathbb{#1}}          \newcommand{\Q}{\field{Q}}
\newcommand{\R}{\field{R}}                   \newcommand{\Z}{\field{Z}}
\newcommand{\C}{\field{C}}
                    \newcommand{\e}{\varepsilon}
 \newcommand{\ra}{\rightarrow}
\begin{document}

\title[Monodromy of Cyclic Coverings of the Projective Line]
{Monodromy of Cyclic Coverings of the Projective Line}

\author{T.N.Venkataramana}

{\address{ T.N.Venkataramana, School of Mathematics, TIFR, Homi Bhabha
Road, Colaba, Mumbai 400005, India}

\email{venky@math.tifr.res.in}

\subjclass{primary:  22E40.  Secondary:  20F36  \\  T.N.Venkataramana,
School of  Mathematics, Tata  Institute of Fundamental  Research, Homi
Bhabha Road, Colaba, Mumbai 40005, INDIA}

\date{}

\begin{abstract} We show that the  image of the pure braid group under
the monodromy  action on the homology  of a cyclic  covering of degree
$d$ of the projective line  is an arithmetic group provided the number
of ramification  points is sufficiently  large compared to  the degree
$d$ and the ramification degrees are co-prime to $d$.
\end{abstract}

\maketitle{}

\section{Introduction}\label{introsection}

A subgroup $\Gamma \subset GL_N(\Z)$, is said to be an {\bf arithmetic
group} if $\Gamma  $ has finite index in  its integral Zariski closure
${\mathcal G}(\Z)$  (i.e.  suppose  ${\mathcal G}\subset GL_N$  is the
Zariski closure of $\Gamma  $; then $\Gamma \subset {\mathcal G}(\Z)$,
which  by definition, is  ${\mathcal G}\cap  GL_N(\Z)$.  We  say that
$\Gamma $ is arithmetic, if $\Gamma $ is a subgroup of finite index in
${\mathcal  G}(\Z)$).   Otherwise,  we  say  that  $\Gamma  $  is  not
arithmetic, or that $\Gamma $ is {\it thin} \cite{Sar}. \\

A natural class of subgroups  of $SL_N(\Z)$ arise as monodromy groups.
Suppose  $X\ra   S$  is  a  family  of   smooth  projective  varieties
$(X_s)_{s\in  S}$  parametrised  by  a  base variety  $S$.   Then  the
fundamental  group  $\pi  _1(S)$   acts  on  the  integral  cohomology
$H^*(X_s)$ of a  typical fibre $X_s$. The image of  this action is the
``monodromy group''.  Griffiths  and Schmid \cite{Gr-Sch} first raised
the possibility that monodromy  groups are arithmetic.  However, there
are several examples which show that the monodromy group is not always
an arithmetic  group.  Notable among them are  those of Deligne-Mostow
\cite{Del-Mos} (see  also \cite{Nor} where the monodromy  group is not
even  finitely presented).   In  the examples  of \cite{Del-Mos},  the
monodromy  group is  a subgroup  of  infinite index  in an  arithmetic
lattice in a  product of unitary groups $U(r,s)$  (such that the group
of  the real  points of  the Zariski  closure of  the  monodromy group
contains the  product of  the special unitary  groups $SU(p,q)$  and )
such  that one  of  the factors  of  the product  is $U(n-1,1)$.   The
projection of the  monodromy to this factor sometimes  gives a lattice
in $U(n-1,1)$  which can  be shown to  be a non-arithmetic  lattice in
$U(n-1,1)$. \\

The  examples of  \cite{Del-Mos}  arise as  the  monodromy of  certain
families of  cyclic coverings of a  fixed order $d$  of the projective
line ${\mathbb  P}^1(\C)$, where the family is  prescribed by choosing
$n+1$  distinct branch  points in  the  affine line  $\C$, with  fixed
ramification. To be precise, let  $n\geq 1$ and $d\geq 2$ be integers.
Fix integers $k_1,k_2,\cdots, k_{n+1}$  with $1\leq k_i \leq d-1$, and
such that  the g.c.d.  of $k_1,k_2,  \cdots, k_{n+1}$ and  $d$ is $1$.
Given $n+1$ distinct points  $a_1,a_2, \cdots, a_{n+1}$ in the complex
plane, put $a=(a_1, \cdots, a_{n+1}) \in \C^{n+1}$. Consider the curve
$X_{a,k}$ given by the pair ($x,y$) satisfying the equation
\begin{equation} \label{affine} 
y^d= (x-a_1)^{k_1}(x-a_2)^{k_2}\cdots (x-a_{n+1})^{k_{n+1}}\end{equation}
with $y\neq 0$ and $x\neq a_1,a_2, \cdots, a_{n+1}$. 

Let ${\mathcal C}$  be the space of points in  $\C^{n+1}$ all of whose
coordinates are distinct; as the  point $a\in {\mathcal C}$ varies, we
get a family
\[{\mathcal F}=\{(y,x,a)\in \C ^*
\times   \C    \times   {\mathcal   C}:    y^d=   \prod   _{i=1}^{n+1}
(x-a_i)^{k_i}\},\]  and  the  fibration  ${\mathcal  F}\ra
{\mathcal  C}$ given  by the  projection map  $(y,x,a)\mapsto  a$. The
fibre over  a point $a\in {\mathcal  C}$ is the affine  curve given in
equation  (\ref{affine}).  The  curve $X_{a,k}$  is a  compact Riemann
surface $X_{a,k}^*$ minus a finite set of punctures.  We may consider,
analogously, the  family ${\mathcal F}^*$ of  compact Riemann surfaces
$X_{a,k}^*$ fibering over ${\mathcal C}$. \\

The fundamental group of the space  ${\mathcal C}$ is well known to be
the  pure  braid group  $P_{n+1}$  on  $n+1$  strands (see  subsection
\ref{topologybraid}); thus the fibration ${\mathcal F}^* \ra {\mathcal
C}$ yields a monodromy representation
\[\rho _M ^* (k,d): P_{n+1}\ra GL(H_1(X_{a,k} ^* ,\Z)),\] of $P_{n+1}$
on the integral homology of the fibre $X_{a,k}^*$.  If $N$ is the rank
of the abelian group $H_1(X_{a,k}^*,\Z)$,  then the image $\Gamma $ of
$P_{n+1}$ is a subgroup of $GL_N(\Z)$.  It can be shown that the group
${\mathcal G}(\R)$ of  real points of the Zariski  closure of $\Gamma$
is  contained  in a  product  of  unitary  groups $U(p,q)$  such  that
${\mathcal G}(\R)$ contains the  product of the special unitary groups
$SU(p,q)$. The group $G=\Z/d\Z$ acts on the equation
\[y^d= \prod _{i=1}^ {n+1}(x-a_i)^{k_i}  ,\] by the map $g(x,y)\mapsto
gy$ for $g\in G$ where $G$ is viewed as $d$-th roots of unity in $\C$.
We  may   decompose  the  homology  $H_1$  of   $X_{a,k}^*$  into  $G$
eigenspaces with respect to this  action.  Fix a primitive $d$-th root
of unity, say $\omega =e^{2\pi i  /d}$. Fix a generator $T$ of $G$. If
$1\leq  f  \leq  d$ is  an  integer,  fix  the  part of  the  homology
$H_1(X_{a,k}^*,\C)$ on which the generator $T\in G$ acts by the scalar
$\omega  ^f$. The  group  of real  points  of the  Zariski closure  of
$\Gamma $  acting on this  part will again  be contained in  a unitary
group  of the  form $U(p_f,q_f)$  and  will (in  general) contain  the
special unitary group $SU(p_f,q_f)$. \\

We now  describe briefly, the results of  \cite{Del-Mos}.  Suppose $f$
is an integer with $1\leq f \leq d-1$, and coprime to $d$. Given $x\in
{\mathbb  R}$,  denote  by  $\{x\}$  its  fractional  part.  Put  $\mu
_i=\{\frac{k_if}{d}\}$   for   $1\leq  i   \leq   n+1$.   Write   $\mu
_{\infty}=2- \sum \mu  _i$. We impose the following  conditions on the
$\mu _i$.  [1]  $\mu _i+\mu _j <1$ for  all $i,j$ including $i=\infty$
[2] $0< \mu _{\infty}$ [3]  $\frac{1}{1-\mu _i -\mu _j}$ is an integer
if $k_i\neq k_j$ [4] if $k_i=k_j$ then $\frac{1}{1-2\mu _i}$ is a half
integer.  \\

Then it  is shown in  \cite{Del-Mos} that the  factor of the  group of
real  points of  the Zariski  closure of  the monodromy  $\Gamma  $ in
$GL(H_1(X_{a,k}^*,\Z))=GL_N(\Z)\subset GL_N(\C)$  corresponding to $f$
(as  in the preceding  paragraph) contains  the special  unitary group
$SU(n-1,1)$ and  is contained in $U(n-1,1)$.  Moreover, the projection
of $\Gamma $ to this factor gives a lattice in $U(n-1,1)$ (if the $\mu
_i$ satisfy some further conditions, then the lattice in $U(n-1,1)$ is
an {\it arithmetic} lattice). \\

For example,  consider the family for  varying $b_1,b_2,b_3,b_4\in \C$, 
all distinct, of the curves corresponding to the equation
\[y^{18}=(x-b_1)(x-b_2)(x-b_3)(x-b_4).\] In this  case, $n=3$.  By the
criteria of  \cite{Del-Mos}, the monodromy is  non-arithmetic.  In the
notation of  the preceding paragraph,  we take $f=7$ and  $d=18$; then
$\mu _i=7/18$  and $\mu _{\infty}=8/18$.   Hence $\frac{1}{1-\mu _i-\mu
_j}$ is a half integer -namely  $9/2$ - if $i,j \leq 4$ (and hence $\mu
_i=\mu _j$);  and $\frac{1}{1-\mu _i  -\mu _j}$ is an  integer- namely
$6$- if  $i\leq 4$ and  $j=\infty$.  By the half  integrality ($\Sigma
-INT$) criterion  of Mostow (in  \cite{Mos}, see page 104,  with $N=5$
and  $\mu _i=7/18$,  and  $\mu _{\infty}=8/18$)  it  follows that  the
projection  of $\Gamma $  to the  factor corresponding  to $f=7$  is a
discrete subgroup of $U(2,1)$ and is, in fact, a lattice in
$U(2,1)$. One  can easily check, from  the list given  there, that the
monodromy is  non-arithmetic i.e.  has infinite index  in its integral
Zariski closure. \\

Let us now return to the general situation of equation (\ref{affine}).
The  condition   of  \cite{Del-Mos}  that   $0<  \mu  _{\infty}=2-\sum
\{\frac{k_if}{d}\}$ implies that $n+1\leq  \sum k_i \leq 2d$ and hence
that $n\leq  2d-1$.  We  would like to  investigate what  happens when
$n\geq 2d$.   The following theorem says  that if $n\geq  2d$ then for
most $k_i$'s, the monodromy is arithmetic. Precisely, we prove
\begin{theorem} \label{cyclicmonodromy}  Suppose $d\geq 2$  and $n\geq
1$  are integers,  $k_1,  \cdots, k_{n+1}$  are  integers with  $1\leq
k_i\leq d-1$ with $g.c.d (k_i,d)=1$ for each $i$.
Suppose that \[ n\geq 2d.\]
Then  the  image  $\Gamma  =\rho _M^*(k,d)(P_{n+1})$   of  the  monodromy
representation $\rho _M^* (k,d)$ of $P_{n+1}$ is an arithmetic group. \\

Moreover, the  monodromy group  is (up to  finite index) a  product of
irreducible lattices each of which has $\Q$-rank at least two.
\end{theorem}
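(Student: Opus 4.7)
The plan is to analyze the monodromy eigenspace by eigenspace, verify that each factor has $\Q$-rank at least two after restriction of scalars, and then apply an arithmeticity criterion based on producing enough unipotent elements in the monodromy.

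First, I would decompose $H_1(X_{a,k}^*,\Q)$ into isotypic components under the deck group $G=\Z/d\Z$. Since $G$ commutes with the monodromy, the representation $\rho_M^*(k,d)$ respects this decomposition. For each $f$ with $1\leq f\leq d-1$, let $V_f$ denote the $\o^f$-isotypic part; it carries an $\mathcal{O}=\Z[\z_d]$-module structure and a Hermitian form induced by the intersection pairing, and the monodromy acts by isometries, landing in a unitary group $U(p_f,q_f)$ over $\mathcal{O}$. Restricting scalars from $\Q(\z_d)$ to $\Q$ collects these into a semisimple $\Q$-algebraic group $\mathcal{G}$, one simple factor per Galois orbit of characters, whose integer points form the arithmetic target.

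Second, I would compute the signatures $(p_f,q_f)$ via the Chevalley--Weil formula. The hypothesis $n\geq 2d$, combined with $\gcd(k_i,d)=1$, forces $\sum_i\{k_if/d\}\geq 2$ for every $f$ coprime to $d$ (since each fractional part is at least $1/d$, and there are at least $2d$ summands); this is exactly the negation of the Deligne--Mostow condition $\m_\infty>0$. The same computation then shows $\min(p_f,q_f)\geq 2$ for every relevant $f$, which (as the $\Q$-rank of $U(p_f,q_f)$ under restriction of scalars equals $\min(p_f,q_f)$) gives the $\Q$-rank-two assertion of the theorem as soon as arithmeticity is established.

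Third, I would identify the monodromy generators. The pure braid group $P_{n+1}$ is generated by the elementary pure braids $A_{ij}$ ($1\leq i<j\leq n+1$), and Picard--Lefschetz identifies each $A_{ij}$ with a product of Dehn twists along a family of vanishing cycles lying above the segment from $a_i$ to $a_j$. On each eigenspace $V_f$ the element $A_{ij}$ acts as a complex reflection of finite order dividing $d$, with multiplier a power of $\o$ determined by $k_i+k_j\bmod d$. Crucially, although each $A_{ij}$ itself has finite order on $V_f$, commutators and products such as $(A_{ij}A_{jk})^m$ and $[A_{ij},A_{kl}]$ produce genuinely unipotent elements lying in root subgroups of the unitary groups.

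The final and most technical step, which I expect to be the main obstacle, is to apply an arithmeticity criterion of the type developed in my earlier papers: a Zariski-dense subgroup of a semisimple $\Q$-group of $\Q$-rank at least two that contains, up to finite index, the $\Z$-points of the unipotent radical of a proper parabolic (equivalently, a highest root subgroup together with its opposite) has finite index in the ambient arithmetic group. Verifying its hypothesis in our setting requires (i) Zariski density of the monodromy on each simple factor, which I would establish by a direct analysis of the reflections $A_{ij}$ using the fact that their reflection hyperplanes are in general position once $n$ is large, and (ii) exhibiting explicit words in the $A_{ij}$ whose images on each $V_f$ span a finite-index sublattice of some root subgroup of $U(p_f,q_f)$. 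The hypothesis $n\geq 2d$ is exactly what furnishes enough independent reflections per eigenspace to produce a full root subgroup; in the complementary regime $n<2d$, where $\min(p_f,q_f)$ can drop to one, the analogous statement is false and explains the non-arithmetic examples of Deligne--Mostow.
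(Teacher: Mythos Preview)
Your broad architecture is right, but two of the load-bearing steps are not correct as stated and correspond precisely to the places where the paper does real work.

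First, the rank computation. You assert that the $\Q$-rank of the restriction of scalars of $U(p_f,q_f)$ equals $\min(p_f,q_f)$. This is false: the $\Q$-rank of $R_{K_d/\Q}SU(h)$ equals the $K_d$-rank of $SU(h)$, which is the Witt index of the Hermitian form $h$ over the cyclotomic field $E_d$, not the archimedean signature at one place. A Hermitian form can have $\min(p_f,q_f)\geq 2$ at every archimedean place and still be anisotropic over $E_d$. Your Chevalley--Weil computation of $\sum_i\{k_if/d\}\geq 2$ only shows that the Deligne--Mostow ball condition fails; it says nothing about isotropic vectors over $E_d$. The paper obtains $K_d$-rank $\geq 2$ by a pigeonhole argument: among the $d$ partial products $t_1,\,t_1t_2,\,\ldots,\,t_1\cdots t_d$ in the cyclic group $\mu_d$ two must coincide, giving a \emph{consecutive} block $I\subset\{1,\ldots,d\}$ with $\prod_{i\in I}t_i=1$; repeating on $\{d+1,\ldots,2d\}$ (this is where $n\geq 2d$ enters) gives a second disjoint block $J$. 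Each such block produces an explicit isotropic vector $v_I=\sum(1-\pi_i)\e_i$ in the Gassner module over $E_d$, and the two are orthogonal because the index sets are separated. That is how $K_d$-rank $\geq 2$ is established.

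Second, the unipotent elements. Your claim that products like $(A_{ij}A_{jk})^m$ or commutators $[A_{ij},A_{kl}]$ are ``genuinely unipotent'' on $V_f$ is not justified; a product of two finite-order complex reflections in a unitary group is generically semisimple, and commutators of reflections along orthogonal lines are trivial. The mechanism in the paper is different and more delicate: a block $I$ with $\prod_{i\in I}t_i=1$ makes the restricted Gassner representation on $\mathrm{span}\{\e_i:i\in I\}$ \emph{degenerate}, so the corresponding unitary group acquires a nontrivial unipotent radical. One then takes the commutator of $s_1^2$ with the central element $(\Delta')^2$ of the smaller pure braid group $P_{|I|-1}$; this commutator is shown explicitly to be a nonidentity element of that unipotent radical, and its conjugates under $P_{|I|-1}$ generate a finite-index subgroup of the integral unipotent radical (this uses the ``full lattice'' statement, part [3] of Proposition~\ref{gassnerspecialisation}). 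Passing to the next index adds a nondegenerate direction, turning the abelian radical into a Heisenberg group inside a genuine parabolic of the reductive $U(h)$; Zariski density then supplies an opposite parabolic, and the Bass--Milnor--Serre/Raghunathan/Vaserstein criterion (Theorem~\ref{bamise}) finishes the base case. An induction on $n$ handles $n\geq l+m$. What you are missing, in short, is the identification of the monodromy with the reduced Gassner representation (Proposition~\ref{monodromyandgassner}) and the pigeonhole/degeneracy mechanism that simultaneously yields the isotropic vectors and the unipotents.
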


In  \cite{Ve2}, the  case  when all  the  integers $k_i$  are $1$  was
considered (then  $g.c.d (k_i,d)=1$  for  all  $i$).  Consider  the
compactification $X_a^*$ of the affine curve
\[y^d= (x-a_1)(x-a_2)\cdots (x-a_{n+1}), \]  with $y\neq 0$ and $x\neq
a_1, \cdots, a_{n+1}$.  There is now the monodromy  action of the pure
braid  group $P_{n+1}$  (even of  the full  braid group  $B_{n+1}$) on
$H_1(X_a^*,\Z)$. The following result is proved in \cite{Ve2}. 

\begin{theorem}  \label{fullbraidmonodromy} If  $d\geq  3$ and  $n\geq
2d$, then  the image $\Gamma  $ of the monodromy  representation $\rho
(d):B_{n+1}\ra GL(H_1(X_a^*,\Z))=GL_N(\Z)$ is an arithmetic group. \\

Moreover, the  monodromy is  a finite index  subgroup of a  product of
irreducible  lattices, each  of which  is a  non-co-compact arithmetic
group and has $\Q$-rank at least two.
\end{theorem}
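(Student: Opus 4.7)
The plan is to decompose the monodromy representation under the deck-transformation action and prove arithmeticity eigenspace by eigenspace via a transvection criterion. Since every exponent equals $1$, the cyclic deck group $G=\Z/d\Z$ commutes with the braid action on $X_a^*$, giving a $B_{n+1}$-stable decomposition $H_1(X_a^*,\C)=\bigoplus_{f=0}^{d-1} V_f$ into $\omega^f$-eigenspaces. For $f\neq 0$ each $V_f$ carries a $\Gamma$-invariant Hermitian form whose signature $(p_f,q_f)$ may be read off from the Chevalley--Weil formula (equivalently from counting the fractional parts $\{f/d\}$). A direct count gives $p_f+q_f\geq n-1$, and the hypothesis $n\geq 2d$ then forces $\min(p_f,q_f)\geq 2$ for every $f\neq 0$, placing each factor firmly in the higher real-rank regime.

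Next I would describe the action of the Artin half-twists $\sigma_i\in B_{n+1}$ on each $V_f$: by the Picard--Lefschetz formula for cyclic coverings, $\sigma_i$ acts on $V_f$ as a complex pseudo-reflection of order $d$ (scalar $\omega^f$ on one line and identity on the Hermitian-orthogonal hyperplane). These pseudo-reflections generate the projection $\Gamma_f$ of $\Gamma$ to $GL(V_f)$. Zariski density of $\Gamma_f$ in $SU(p_f,q_f)$ then follows along Deligne--Mostow lines: the generating reflections are too numerous and too interwoven to preserve any proper subspace or to lie in a proper reductive subgroup of $SU(p_f,q_f)$ containing so many order-$d$ pseudo-reflections.

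The heart of the proof is to produce enough unipotent transvections in each $\Gamma_f$. I would take suitable words in adjacent Artin generators, for instance the commutator $[\sigma_i^d,\sigma_{i+1}^d]$ or a power like $(\sigma_i\sigma_{i+1})^{d}$: the $d$-th powers kill the eigenvalue $\omega^f$ and what remains on $V_f$ is a unipotent transvection. The bound $n\geq 2d$ provides at least $2d-1$ consecutive generators, supplying a large family of such transvections; crucially, the full braid group (not just $P_{n+1}$) realises the permutation action of $S_{n+1}$, allowing us to conjugate these transvections along the full root system of $SU(p_f,q_f)$. The target statement is that $\Gamma_f$ contains, up to finite index, the $\Z$-points of the unipotent radical of a maximal $\Q$-parabolic subgroup of $SU(p_f,q_f)$. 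Applying an arithmeticity criterion for subgroups generated by unipotents in higher-rank semisimple groups (a variant of Raghunathan--Tits), we conclude that $\Gamma_f$ is arithmetic, of $\Q$-rank at least two, and non-cocompact (since it contains nontrivial unipotents).

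The final step is assembly: a Goursat-type argument together with superrigidity for the higher-rank factors $SU(p_f,q_f)$ gives that $\Gamma$ has finite index in the product $\prod_{f\neq 0}\Gamma_f$, modulo the complex-conjugate identification $V_f\leftrightarrow V_{d-f}$ (which makes the corresponding projections commensurable). The main obstacle is the third step --- exhibiting explicit transvections in each $V_f$ and checking that their $\Gamma$-orbit spans a finite-index subgroup of an integral unipotent radical. The hypothesis $n\geq 2d$ enters there in two essential ways: to guarantee $\min(p_f,q_f)\geq 2$ so that the unipotent arithmeticity criterion applies, and to ensure enough room among the strands to generate and then conjugate the required transvection data.
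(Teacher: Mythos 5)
First, a contextual note: the paper does not reprove this theorem --- it is quoted from \cite{Ve2} --- but the present paper's proof of the more general Theorem \ref{purebraidmainth} follows the same method, so that is the right benchmark. Your overall architecture (eigenspace decomposition for the deck group, Zariski density, production of integral unipotent radicals, a Raghunathan--Tits type criterion, and assembly of the product) does match the paper's. The fatal problem is in your third step, the production of transvections, which is exactly what the paper identifies as ``the heart'' and where all the work lies. When the invariant Hermitian form is nondegenerate on the reflecting line, the pseudo-reflection $\sigma_i$ acting on $V_f$ is \emph{semisimple} of finite order (its nontrivial eigenvalue is a root of unity, in fact $-\omega^f$, not $\omega^f$). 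Taking the $d$-th power of a diagonalizable reflection whose eigenvalue is killed yields the \emph{identity} (or, for $d$ odd, a reflection with eigenvalue $-1$), not a unipotent transvection; likewise $[\sigma_i^d,\sigma_{i+1}^d]$ is either trivial or a semisimple element of the unitary group of the plane spanned by the two reflecting vectors. A power of a complex reflection leaves a unipotent residue only when the reflection is already non-semisimple, i.e.\ when the reflecting vector is isotropic --- which is precisely the degenerate situation your construction never reaches. The paper's actual mechanism is different: by pigeonhole on the partial products $t_1\cdots t_p$ (for Burau, $t_i=q$, so one restricts to a sub-braid group on a number of strands divisible by $d$), one finds a consecutive block on which the sub-Burau/Gassner representation is \emph{degenerate}, hence carries an isotropic invariant vector $w$; relative to the flag $E w\subset w^{\perp}\subset V$ the subgroup acts by block-unipotent matrices, and the commutator $[s_1^2,(\Delta')^2]$ with the central element $\Delta'^2$ (which acts by a non-unit scalar on the quotient) is a genuinely nontrivial element of a Heisenberg unipotent radical (Propositions \ref{unipotentradical} and \ref{opposite}). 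This is where $n\geq 2d$ is really used.

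A second, related gap: your rank verification computes the real signature $(p_f,q_f)$ and concludes ``higher real rank,'' but the unipotent-generation criterion (Theorem \ref{bamise}) also requires the existence of a proper parabolic over the number field, i.e.\ Witt index at least $1$ (and the stated conclusion of $\Q$-rank at least two requires Witt index at least $2$) for the Hermitian form over the cyclotomic field. Having $\min(p_f,q_f)\geq 2$ at the archimedean places does not by itself produce isotropic vectors over the field; arithmetic unitary groups of anisotropic forms contain no unipotents at all, and then the entire transvection strategy is vacuous. In the paper the two orthogonal isotropic vectors $v,v'$ are produced explicitly by the same pigeonhole argument (two disjoint consecutive blocks inside $\{1,\dots,2d\}$ with trivial partial product), which simultaneously supplies the $K$-rank $\geq 2$ and the degenerate subrepresentations needed for the unipotents. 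So the single combinatorial construction you are missing is responsible for both of the points where your proposal currently asserts rather than proves.
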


\begin{remark} If $n+1\leq d$ then the group of integral points of the
Zariski closure of the monodromy is  (up to finite index) a product of
irreducible arithmetic  lattices, some of which  form {\it co-compact}
lattices of their real Zariski closures. \\

A    result    of     A'Campo    \cite{A'C}    says    that    Theorem
\ref{fullbraidmonodromy} holds when $d=2$ as well.\\

If we replace the pure braid  group by the mapping class group $\Gamma
_g$ of  the fundamental  group of a  compact Riemann surface  of genus
$g\geq 2$, and consider analogously,  the action of $\Gamma _g$ on the
family of  cyclic coverings of a  fixed degree of the  family of genus
$g$  Riemann surfaces,  then the  arithmeticity of  the image  of this
action (monodromy) is proved in  \cite{Looi2} (at the time the present
article  was  written,   the  author  was  not  aware   of  the  paper
\cite{Looi2}; the method of proof  is similar and uses the presence of
unipotent  elements  in the  monodromy  group.   But,  in the  present
article, more work is needed to generate unipotent elements- under the
assumption that $n\geq 2d$).
\end{remark}

A special case of Theorem \ref{cyclicmonodromy} is the following

\begin{corollary} \label{primemonodromy} Suppose $d$ is a prime, $k_1,
\cdots, k_{n+1}$  integers with $1\leq  k_i \leq d-1$ and  $n\geq 2d$.
Then the monodromy group $\Gamma$, namely the image of $P_{n+1}$ under
the representation $\rho (k,d):P_{n+1}\ra GL(H_1(X_{a,k}^*,\Z))$ is an
arithmetic group.
\end{corollary}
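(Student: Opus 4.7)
The plan is to observe that Corollary \ref{primemonodromy} is essentially an immediate specialization of Theorem \ref{cyclicmonodromy}, so no new work is required beyond verifying that the hypotheses of the theorem are met.

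First I would check the coprimality hypothesis. Theorem \ref{cyclicmonodromy} requires that $\gcd(k_i,d)=1$ for each $i$. In the setting of the corollary, $d$ is prime and $1\leq k_i\leq d-1$, so $k_i$ is not divisible by $d$; since the only positive divisors of $d$ are $1$ and $d$, this forces $\gcd(k_i,d)=1$ automatically. Thus the coprimality condition is a free consequence of primality of $d$ together with the range of $k_i$.

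Next I would verify the remaining numerical hypotheses: $d\geq 2$ (which holds since $d$ is prime), and $n\geq 2d$ (which is explicitly assumed). With all hypotheses of Theorem \ref{cyclicmonodromy} in place, one applies that theorem to conclude directly that the monodromy image $\rho(k,d)(P_{n+1})$ in $GL(H_1(X_{a,k}^*,\Z))$ is an arithmetic group.

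There is no serious obstacle here; the corollary is a bookkeeping consequence of the theorem. The only point worth remarking is that, in the general formulation of Theorem \ref{cyclicmonodromy}, the condition $\gcd(k_i,d)=1$ must be imposed on each $k_i$ individually (it is not equivalent to the weaker condition $\gcd(k_1,\ldots,k_{n+1},d)=1$ used earlier in the introduction to define the curve $X_{a,k}$), and it is precisely the primality of $d$ that allows one to drop this hypothesis in the corollary's statement without losing any cases.
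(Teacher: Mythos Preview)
Your proposal is correct and matches the paper's own treatment: the corollary is stated as an immediate special case of Theorem \ref{cyclicmonodromy}, and the only thing to observe is that primality of $d$ together with $1\leq k_i\leq d-1$ forces $\gcd(k_i,d)=1$, which you verify explicitly.
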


\begin{remark} If $d$ is not assumed to be prime, then the analogue of
Corollary \ref{primemonodromy}  is false in general, even  when $n$ is
large.  As  an example,  consider $d=2\times 18$  and let $n$  be {\it
arbitrary}.   Suppose  $a_1,a_2,\cdots,  a_n,  b_1,  b_2,b_3,b_4$  are
distinct complex numbers. Consider the two equations
\[C_d(a,b): ~~y^{2\times       18}=      (\prod      _{i=1}^      n
(x-a_i))^{18}(x-b_1)\cdots (x-b_4)~~{\rm and}\]
\[C_{18}(b):   ~~w^{18}=(x-b_1)\cdots  (x-b_4).\]   There  is   a  map
$C_{2\times 18}(a,b)\ra C_{18}(b)$ given by $(x,y)\mapsto (x,w)$ with
\[w= \frac{y^2}{(x-a_1)\cdots  (x-a_n)} ~~ .  \] The  monodromy of the
family  $C_{2\times 18}(a,b)$  (as  $a$  and $b$  vary)  on the  first
homology   of  the   curves  $C_{2\times   18}(a,b)$  maps   onto  the
corresponding monodromy  of the family  of the curves  $C_{18}(b)$ (as
$b$ varies).  The  latter is not arithmetic, by  the example discussed
earlier.  Therefore, the monodromy of the family $C_{2\times 18}(a,b)$
is also non-arithmetic.
\end{remark}

\subsection{Description of the Proof}

The   proof    is   very   similar    to   the   proof    of   Theorem
\ref{fullbraidmonodromy} given in \cite{Ve2}.  In \cite{Ve2} the proof
was  by   showing  that  the   monodromy  was  related  to   the  Burau
representation. The properties of the Burau representation (especially
those at roots of unity) were used in the course of the proof. \\

Analogously,  in the present  paper, Theorem  \ref{cyclicmonodromy} is
deduced   from   the   arithmeticity   of  the   images   of   certain
representations  (the reduced  Gassner  representation specialised  at
roots of  unity defined in subsection  \ref{gassnersubsection}) of the
pure  braid group  $P_{n+1}$.   We also  have  to establish,  somewhat
precisely,  the  exact  relationship   of  the  monodromy  in  Theorem
\ref{cyclicmonodromy}  with the Gassner  representation. This  is much
more complicated than in the Burau case.  The monodromy representation
of  Theorem \ref{cyclicmonodromy}  is related  to the  reduced Gassner
representation  of  Theorem   \ref{purebraidmainth}  as  follows  (see
\cite{KM} for related results). \\

One can define the reduced Gassner representation $g_n(k,d)$ at $d$-th
roots  of   unity  where  $k$   is  the  $n+1$-tuple   $(k_1,  \cdots,
k_{n+1})$.  The  image  of  $g_n(k,d)$  takes  the  pure  braid  group
$P_{n+1}$  into $GL_n(E_d)$ where  $E_d=\Q (\omega  _d)$ is  the $d$-th
cyclotomic  extension.  We  will see  in  section \ref{gassnersection}
that  the Gassner  representation $g_n(k,d)$  is irreducible  if $\sum
k_i$ is {\it  not} divisible by $d$; if $\sum _i  k_i$ is divisible by
$d$,   then   $g_n(k,d)$   contains   the  one   dimensional   trivial
representation   $E_dv$   and   the  quotient,   denoted   ${\overline
g_n(k,d)}$,  is  irreducible.  By  an  abuse  of  notation,  we  write
${\overline  g_n(k,d)}$ for  the representation  $g_n(k,d)$  even when
$\sum k_i$ is not divisible by $d$.  \\

If $X_{a,k}$  is the  open curve, then  we have the  monodromy action,
denoted  $\rho _M(k,d)$  on  $H_1(X_{a,k},\Q)$ (and  the action  $\rho
_M^*(X_{a,k}^*,  \Q)$   on  the  homology  of   the  projective  curve
$X_{a,k}^*$).  On the homology  of $X_{a,k}$ the cyclic group $\Z/d\Z$
operates. Given a module $V$  of the $\Q$-group algebra $\Q [\Z/d\Z]$,
denote by $V^{ni}$  the quotient of $V$ modulo  the space of invariant
vectors    in   $V$    under   the    action   of    $\Z/d\Z$.    Take
$V=H_1(X_{a,k},\Q)$.  We call $V^{ni}$  the ``non-invariant''  part of
$V$ and denote  by $\rho _M(k,d)^{ni}$ the representation  of $P_{n+1}$ on
$V^{ni}$.  In section \ref{monodromysection}, we will prove

\begin{proposition}   \label{monodromyandgassner}  Suppose   that  the
numbers $k_i$  are all co-prime to  $d$. Denote by  $\rho _M(k,d)$ the
representation of  the pure braid  group $P_{n+1}$ on the  homology of
the  open  curve $H_1(X_{a,k},\Q)$.  Then  the  non-invariant part  of
$H_1(X_{a,k},\Q)$ is the direct sum
\[\rho   _M(k,d)   ^{ni}=\bigoplus  _{e   \mid   d}  g_n(k,e).\]   The
representation  $\rho  _M  ^*(k,d)$   of  $P_{n+1}$  on  the  homology
$H_1(X_{a,k}^*, \Q)$ of the compact Riemann surface $X_{a,k}^*$ is the
direct sum
\[\rho _M^*(k,d)=\bigoplus _{e \mid d} {\overline g_n(k,e)}.\]
The sum is over all divisors $e\geq 2$ of the integer $d$. 
\end{proposition}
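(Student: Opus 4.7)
My plan is to realize $X_{a,k}$ as an unramified cyclic $\Z/d\Z$-cover of $U=\C\setminus\{a_1,\ldots,a_{n+1}\}$ via $(x,y)\mapsto x$, and to decompose its rational homology along $\Q[\Z/d\Z]=\prod_{e\mid d}\Q(\z_e)$. Because $\gcd(k_i,d)=1$, the homomorphism $\pi_1(U)\to\Z/d\Z$ sending the loop $\g_i$ around $a_i$ to $k_i\bmod d$ is surjective, so the cover is connected of degree $d$ and its deck action commutes with the pure-braid monodromy. This yields
\[H_1(X_{a,k},\Q)=\bigoplus_{e\mid d}V_e\]
as $P_{n+1}$-modules, where $V_e$ is the $\Q(\z_e)$-isotypic component; the transfer for the unramified cover identifies $V_1$ with $H_1(U,\Q)$, so the non-invariant part is exactly $\bigoplus_{e\geq 2}V_e$.

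Next I would identify each $V_e$ with $g_n(k,e)$. For every $e\mid d$, setting $z=y^{d/e}$ exhibits the intermediate cyclic cover $X_{a,k,e}:\ z^e=\prod(x-a_i)^{k_i}$ as a $\Z/e\Z$-quotient of $X_{a,k}$, and the transfer argument identifies $V_e$ with the primitive $\Q(\z_e)$-isotypic subspace of $H_1(X_{a,k,e},\Q)$ for the induced $\Z/e\Z$-action. Over $\Q(\z_e)$, each primitive character eigenspace is by construction the specialisation of the reduced Gassner representation at $t_i=\z_e^{k_i}$, i.e.\ $g_n(k,e)$; restricting scalars and summing over the $\varphi(e)$ Galois conjugates recovers $V_e$ as the underlying $\Q$-space of $g_n(k,e)$. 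A dimension check using $\chi(X_{a,k})=d\,\chi(U)=-nd$ and $\sum_{e\mid d,\,e\geq 2}\varphi(e)=d-1$ confirms $\dim_\Q V_e=n\varphi(e)$ and matches $\dim H_1(X_{a,k},\Q)^{ni}=(nd+1)-(n+1)=n(d-1)$.

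For the compactification, the inclusion $X_{a,k}\hookrightarrow X_{a,k}^*$ yields a $\Z/d\Z$-equivariant exact sequence
\[0\ra K\ra H_1(X_{a,k},\Q)\ra H_1(X_{a,k}^*,\Q)\ra 0,\]
with $K$ generated by loops around the punctures modulo the single relation that the total loop class vanishes. Each $a_i$ has a unique $\Z/d\Z$-fixed preimage (since $\gcd(k_i,d)=1$), so the corresponding loops lie entirely in $V_1$. The preimages of $\infty$ form one $\Z/d\Z$-orbit of size $\gcd(\sum k_i,d)$, and the $\chi$-isotypic part of the resulting permutation module is one-dimensional precisely when $\chi$ is trivial on the stabiliser, i.e.\ when the order $e$ of $\chi$ divides $\sum k_i$. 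Hence for each $e\geq 2$ with $e\mid\sum k_i$ the kernel $K$ contributes exactly the $P_{n+1}$-trivial line $E_ev\subset g_n(k,e)$, whose quotient is $\overline{g_n(k,e)}$ by definition; for other $e$ the map $V_e\to H_1(X_{a,k}^*,\Q)_e$ is an isomorphism and one adopts the convention $\overline{g_n(k,e)}=g_n(k,e)$. Assembling the pieces gives the two displayed decompositions.

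The principal difficulty lies in the identification used in the second paragraph: that the primitive $\z_e$-eigenspace in $H_1(X_{a,k,e},\C)$, as a $P_{n+1}$-module, literally coincides with the specialisation of the reduced Gassner representation at $t_i=\z_e^{k_i}$. I would handle this by fixing a CW-model of $U$ whose universal abelian cover realises the (unreduced) Gassner chain complex over $\Z[t_1^{\pm},\ldots,t_{n+1}^{\pm}]$, then applying flat base change along $t_i\mapsto\z_e^{k_i}$ and comparing with the cellular chain complex of the finite cyclic cover; the reduction (quotient by $E_ev$) is then matched precisely with the compactification step above.
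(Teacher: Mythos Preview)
Your proposal is correct and reaches the same decomposition, but the route differs from the paper's in an instructive way. The paper's argument is almost entirely group-theoretic: it first identifies $\pi_1(X_{a,k})$ with the kernel $K_0(k,d)\subset F_{n+1}$ via Artin's theorem (Section~\ref{monodromysection}), and then proves $H_1(K_0(k,d),\Q)^{ni}\simeq\bigoplus_e g_n(k,e)$ by an indirect comparison (Theorem~\ref{K_0homology}). Namely, it introduces an auxiliary free product $F_{n+1}*t^{\Z}$, whose kernel $K(k,d)$ receives maps both from the Gassner module $K^{ab}$ built in Section~\ref{artinsection} and from $K_0(k,d)^{ab}$; a codimension count then forces the two images in $K(k,d)^{ab,ni}$ to coincide. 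The compactification is handled by passing to the quotient $Q_0(k,d)$ of $K_0(k,d)$ by the normal closure of $x_i^d$ and $(x_1\cdots x_{n+1})^{d/r}$, and tracking the invariant vector $v$ through each $e$-component (Corollary~\ref{Q_0homology}).

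Your approach stays on the topological side throughout: isotypic decomposition of the cover along $\Q[\Z/d\Z]=\prod_{e\mid d}\Q(\z_e)$, intermediate covers $X_{a,k,e}$, and a CW/local-system identification of each primitive eigenspace with the Gassner specialisation. This is the standard twisted-homology viewpoint and is arguably more conceptual; it also makes the role of the hypothesis $\gcd(k_i,d)=1$ (so that each $1-t_i$ is a unit in $\Q(\z_e)$) visible at exactly the step where it is needed. What the paper's route buys is that it never leaves the explicit $\e_i$-basis constructed in Section~\ref{artinsection}, so no separate matching of models or universal-coefficient argument is required. One small caveat in your write-up: ``flat base change'' is not quite the right phrase, since $\Q(\z_e)$ is a quotient of $R$, not a flat $R$-algebra; what you actually use is that the specialised boundary map $\partial\otimes 1$ remains surjective (each $t_i\neq 1$), so that $H_1$ of the specialised two-term complex is $n$-dimensional and receives the $\e_i$ isomorphically.
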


Theorem     \ref{cyclicmonodromy}     follows     from     Proposition
\ref{monodromyandgassner} and Theorem \ref{purebraidmainth}. \\

The  main section of  the paper  is Section  \ref{gassnersection}.  In
section \ref{gassnersection}, we show that the image of the pure braid
group  $P_{n+1}$ at  a primitive  $d$-th root  of unity  contains many
unipotent   elements.   More   precisely,   the   proof   of   Theorem
\ref{purebraidmainth}  is by  showing that  for $n\geq  2d$  the image
$\Gamma  _n(q)$  contains  an  arithmetic subgroup  of  the  unipotent
radical  of   a  parabolic  $\Q$-  subgroup.   By   using  results  of
Bass-Milnor-Serre  and Tits  (\cite{Ba-Mi-Se},  \cite{Ti}), and  their
extensions  to  other groups  (\cite{Ra},  \cite{Va},  \cite {Ve})  on
unipotent generators  for noncocompact arithmetic  groups of $\R$-rank
at least  two, one can  then show that  such groups are  arithmetic if
$n\geq 2d$. \\

In the  Burau  case, this  was  proved in  \cite{Ve2}. It  was
possible to obtain unipotent elements in the Burau case when $n+1$ was
divisible  by $d$,  since in  that case,  the Burau  representation at
$d$-th roots of unity is {\it degenerate}. The unitary group $U(h)$ of
the relevant  hermitian form is not  reductive and we can  get, in the
image of the Burau representation, elements which lie in the unipotent
radical of $U(h)$. \\

An  analogous result  in the  Gassner case  is proved  in  the present
paper.    We  exploit   the  fact   that  (if   $n\geq  2d$)   then  a
subrepresentation of the restriction  of the Gassner representation at
roots  of unity,  to  a  suitable smaller  pure  braid group,  becomes
degenerate.  One can then  generate unipotent elements.  The existence
of  such  a  suitable  smaller  pure  braid  group  is  ensured  by  a
pigeon-hole argument  if $n\geq  2d$.  This is  worked out  in Section
\ref{gassnersection}. \\

In section \ref{cycliccoverings}  we relate the Gassner representation
to the pure braid action on certain finite index subgroups of the free
group  on $n+1$  generators.  This  relation  is obtained  by using  a
Theorem of  Artin on the action of  the (pure) braid group  on the free
group on $n+1$ generators. We then relate this action to the monodromy
in section \ref{monodromysection}. \\

\newpage

\tableofcontents

\section{Algebraic Groups} \label{alggroups}

The following  theorem is an extension  to all simple  groups, and all
opposing parabolic subgroups, of  a result of Bass-Milnor-Serre and of
Tits (the theorem of Bass-Milnor-Serre  and Tits was proved for $SL_n$
($n  \geq 3$)  and $Sp_{2g}$  ($g\geq  2$)), and  where the  parabolic
subgroup was a  minimal parabolic subgroup.  We refer  to section 2 of
\cite{Ve2}  for a detailed  description and  definitions of  the terms
involved.

\begin{theorem}  \label{bamise} Suppose  $G$ is  an  absolutely almost
simple linear  algebraic group defined  over a number field  $K$, such
that $K$-rank  of $G$ is $\geq  1$ and $G(O_K)$ has  higher real rank,
i.e.
\[\infty  - rank  (G)  \stackrel  {def}{=} \sum  _{v \mid  \infty}
K_v-rank (G) \geq 2.\] Suppose  $P$ is a parabolic $K$-subgroup of $G$
with  unipotent   radical   $U$  and   let  $P^{-}$  be   a  parabolic
$K$-subgroup  defined  over $K$  and  opposed  to  $P$ with  unipotent
radical $U^{-}$.  Let  $\Gamma \subset G(O_K)$  be a  subgroup which
intersects $U(O_K)$ in  a finite index subgroup {\rm  (} and similarly
with $U^{-}(O_K)${\rm )}. Then $\Gamma $ has finite index in $G(O_K)$.
\end{theorem}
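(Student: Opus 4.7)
The plan is to derive Theorem~\ref{bamise} directly from the generation theorem for higher rank arithmetic groups by opposing unipotent radicals, namely the extension of Bass--Milnor--Serre (\cite{Ba-Mi-Se}) and Tits (\cite{Ti}) due to Raghunathan (\cite{Ra}), Vaserstein (\cite{Va}) and \cite{Ve}. The essential input I will quote is: if $G$ is absolutely almost simple over $K$ with $K$-rank $\geq 1$ and $\infty$-rank $\geq 2$, and $\mathfrak a \subset O_K$ is any nonzero ideal, then the subgroup of $G(O_K)$ generated by the principal congruence subgroups $U(\mathfrak a)$ and $U^{-}(\mathfrak a)$ contains a principal congruence subgroup $G(\mathfrak b)$ of $G(O_K)$ for some nonzero ideal $\mathfrak b$.

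Granted this, the proof of Theorem~\ref{bamise} is a short reduction. First I would produce a common congruence ideal $\mathfrak a$ with both $U(\mathfrak a) \subset \Gamma$ and $U^{-}(\mathfrak a) \subset \Gamma$. Since $U(O_K)$ is a finitely generated nilpotent group on which $O_K$ acts by scalars along each root space, every finite index subgroup of $U(O_K)$ contains a principal congruence subgroup $U(\mathfrak a_1)$; by hypothesis $\Gamma \cap U(O_K)$ is such a finite index subgroup, so $U(\mathfrak a_1) \subset \Gamma$. Analogously $U^{-}(\mathfrak a_2) \subset \Gamma$ for some $\mathfrak a_2$, and taking $\mathfrak a = \mathfrak a_1 \cap \mathfrak a_2$ gives a common level.

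Applying the generation theorem to this $\mathfrak a$ yields $\langle U(\mathfrak a), U^{-}(\mathfrak a)\rangle \supset G(\mathfrak b)$, and in particular $G(\mathfrak b) \subset \Gamma$. Since $G(\mathfrak b)$ has finite index in $G(O_K)$, so does $\Gamma$, which is precisely the conclusion sought.

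The main obstacle, and essentially the entire content of the theorem, sits in the generation step. Its proof (in the cited references) uses the higher real rank hypothesis $\infty$-rank $(G) \geq 2$ to invoke the positive solution of the congruence subgroup problem, so that unipotent level $\mathfrak a$ propagates to full congruence level $\mathfrak b$; the hypothesis $K$-rank $(G) \geq 1$ merely guarantees that a proper $K$-parabolic $P$ exists so that the statement is non-vacuous. The bulk of the cited arguments consists in producing, from commutators between elements of $U(\mathfrak a)$ and $U^{-}(\mathfrak a)$, sufficiently many elements in the unipotent radicals of \emph{all} parabolic $K$-subgroups, so that the classical BMS--Tits root group generation of $G(O_K)$ can be applied; this is where the combinatorics of root groups and the higher rank assumption enter in an essential way.
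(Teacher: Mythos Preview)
Your proposal is correct and aligns with the paper's treatment: the paper does not actually give a proof of Theorem~\ref{bamise} at all, but simply states it as a known extension of Bass--Milnor--Serre and Tits, referring to \cite{Ra}, \cite{Va}, \cite{Ve} and to section~2 of \cite{Ve2} for details. Your outline---reducing to a common congruence level $\mathfrak a$ in $U$ and $U^{-}$, then invoking the higher-rank generation theorem from those references to obtain a full congruence subgroup---is exactly the argument those references supply, so there is nothing to compare beyond noting that you have correctly identified and sketched the cited result.
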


\subsection{An  inductive step  for integral  unitary groups}  

In  this subsection,  we prove  a  result which  will be  used in  the
inductive proof of Theorem 1.  The  result says that a subgroup of the
integral unitary  group has finite  index if it contains  finite index
subgroups of  smaller integral unitary  groups.  In the  following, we
will assume that  $E$ is a totally imaginary  quadratic extension of a
totally real number field $K$. Let $x\mapsto {\overline x}$ denote the
action of the non-trivial element of the Galois group of the quadratic
extension $E/K$.   Assume that $V$ is a  finite dimensional $E$-vector
space  and that  $h: V\times  V\ra E$  a $K$  bilinear form  such that
$h(\lambda  v,  \mu w)  =\lambda  {\overline  \mu  } h(v,w)$  for  all
$\lambda,   \mu  \in   E$  and   all   $v,w  \in   V$.   Assume   that
$h(w,v)={\overline  h(v,w)}$ for  all $v,w  \in V$.  Then  the unitary
group $U(h)$ (resp. the special  unitary group $SU(h)$) of elements of
$GL(V)$ (resp.  $SL(V)$) which preserve $h$ is  naturally an algebraic
group (resp. an  almost simple algebraic group) over  the totally real
number field $K$. Under suitable  conditions, we will be able to apply
Theorem \ref{bamise} to $SU(h)$. \\

Moreover,  if  $K_v\simeq \R$  is  an  archimedean  completion of  the
(totally real)  number field  $K$, then the  base change of  $U(h)$ to
$K_v$ is the usual unitary group  of the Hermtitian form over $\R$. In
particular,  the special  unitary group  $SU(h)(K_v)$ is  a co-compact
subgroup  of $U(h)(K_v)$.  As  a consequence,  the  group of  integral
points  $U(h)(O_K)$ and  $SU(h)(O_K)$  are commensurable.   Therefore,
arithmetic  subgroups of  $U(h)$ or  of $SU(h)$,  are the  same  up to
commensurability. \\

We note that  in our applications, the goups  involved will be unitary
groups  of  {\it  skew  hermitian  forms};  but  these  are  naturally
isomorphic to unitary groups of  hermitian forms, by changing the skew
form by a multiple of an imaginary element. For this reason, we do not
stress  the  nature of  the  form, whether  it  is  hermitian or  skew
hermitian.

\begin{notation} With  the preceding  assumptions, Let $V=(V,h)$  be a
nondegenerate hermitian space over $E$ such that $E-rank (V,h)\geq 2$;
since the  special unitary  group is a  $K$-group, this  hypothesis is
equivalent to  $K-rank (SU(h))\geq 2$.  Let $W,\quad  W'$ be codimension
one  subspaces on  which $h$  is again  non-degenerate.   Suppose that
$\Gamma  \subset H_V(O_K)$ is  a subgroup  such that  its intersection
with $U_W(O_K)$  (resp $U_{W'}(O_K)$)  has finite index  in $U_W(O_K)$
(resp.  in $U_{W'}(O_K)$).
\end{notation}

\begin{lemma} \label{inductive}  With the preceding  notation, suppose
that there exists a  non-degenerate subspace $W''$ of the intersection
$W\cap W'$  which contains a  nonzero isotropic vector $v$.   Then the
group $\Gamma $ has finite index in $U_V(O_K)$.
\end{lemma}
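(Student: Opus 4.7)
The plan is to apply Theorem~\ref{bamise} to the absolutely almost simple $K$-group $G=SU(h)$. The hypothesis $E$-rank $(V,h)\geq 2$ forces the $K$-rank of $G$ (which equals the Witt index of $h$ over $E$) to be at least two, and hence the $\infty$-rank of $G$ is at least two as well, since at each archimedean place $v$ of the totally real field $K$ the $K_v$-rank of $G$ dominates the $K$-rank. Because $h$ is non-degenerate on $W''$ and $v\in W''$ is a non-zero isotropic vector, the restriction of $h$ to $W''$ contains a hyperbolic plane, so I would pick a second isotropic vector $v'\in W''$ with $h(v,v')\neq 0$. I would then take $P$ to be the stabiliser of the isotropic $E$-line $Ev$ and $P^-$ the stabiliser of $Ev'$: these are opposed maximal parabolic $K$-subgroups of $G$, and their unipotent radicals $U$ and $U^-$ are two-step nilpotent Heisenberg-type groups with abelianisations $v^\perp/Ev$ and $(v')^\perp/Ev'$ respectively and one-dimensional centres. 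Once $\Gamma$ is shown to meet $U(O_K)$ and $U^-(O_K)$ in finite-index subgroups, Theorem~\ref{bamise} delivers a finite-index sublattice of $SU(h)(O_K)$ inside $\Gamma$; since $U(h)/SU(h)$ is a norm-one torus over the totally real field $K$ with finite group of $O_K$-points, this yields the desired finite index of $\Gamma$ in $U_V(O_K)$.

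To feed Theorem~\ref{bamise}, I would construct the required unipotents explicitly as Eichler-type transvections: for each $w\in v^\perp$, the transformation $E_{v,w}$ fixes $v$, acts trivially on $v^\perp/Ev$, and translates each $x$ outside $v^\perp$ by a multiple of $v$ linear in $w$; such a transvection preserves $W$ precisely when $w\in W\cap v^\perp$, so it lies in $U_W\cap U$. Since $\Gamma\cap U_W(O_K)$ is of finite index in $U_W(O_K)$, the group $\Gamma$ contains a full $O_K$-lattice of Eichler transvections with $w$ ranging over a finite-index sublattice of $W\cap v^\perp$, and analogously with $W$ replaced by $W'$. The crux is a dimension count in $v^\perp$: since $W$ and $W'$ are non-degenerate and contain $v\in W''$, each of $W\cap v^\perp$ and $W'\cap v^\perp$ has $E$-codimension one in $v^\perp$; moreover $W\cap W'$ is not contained in $v^\perp$, because it contains $v'$ with $h(v,v')\neq 0$, so $W\cap W'\cap v^\perp$ has $E$-codimension two in $v^\perp$. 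Inclusion-exclusion then gives $(W\cap v^\perp)+(W'\cap v^\perp)=v^\perp$, so the transvections coming from $U_W$ and $U_{W'}$ already generate an $O_K$-lattice in the abelianisation $U/Z(U)\cong v^\perp/Ev$. Commutators among these transvections will supply the missing central elements; the identical argument with $v'$ in place of $v$ handles $U^-$.

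The step I expect to be most delicate is the commutator calculation that recovers the one-dimensional centre $Z(U)$ from the abelian Eichler transvections produced above. Concretely, one has to verify that the pairing $(w_1,w_2)\mapsto h(w_1,w_2)-\overline{h(w_1,w_2)}$, evaluated on the $O_K$-lattices supplied by $\Gamma\cap U_W$ and $\Gamma\cap U_{W'}$ (and allowing mixed commutators with $w_1\in W\cap v^\perp$ and $w_2\in W'\cap v^\perp$), fills out a finite-index $O_K$-submodule of the purely imaginary part of $O_E$. The dimension equality $(W\cap v^\perp)+(W'\cap v^\perp)=v^\perp$ established above, together with non-degeneracy of the induced hermitian form on $v^\perp/Ev$, is exactly what should underwrite this computation and allow the hypotheses of Theorem~\ref{bamise} to be met.
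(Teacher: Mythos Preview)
Your approach is essentially the same as the paper's: find a hyperbolic pair $v,v'$ inside $W''\subset W\cap W'$, take the parabolic $P$ stabilising $Ev$, and show that the Heisenberg-type unipotent radicals coming from $U_W$ and $U_{W'}$ together fill up $U_P(O_K)$ up to finite index, then apply Theorem~\ref{bamise}. Your dimension count $(W\cap v^\perp)+(W'\cap v^\perp)=v^\perp$ is correct and is exactly what the paper phrases as ``two distinct vector subspaces of codimension one span the whole space.''

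One simplification you are missing: the commutator calculation you flag as ``most delicate'' is unnecessary. Since both $v$ and $v'$ lie in $W''\subset W\cap W'$, the entire hyperbolic plane $Ev\oplus Ev'$ sits inside both $W$ and $W'$. The centre $Z(U)$ of the Heisenberg group consists of the maps $x\mapsto x+\lambda h(x,v)v$ with $\lambda+\overline{\lambda}=0$; these fix $(Ev\oplus Ev')^\perp$ pointwise and move only $v'$, so they already lie in $U_W$ (and in $U_{W'}$). Hence $\Gamma\cap U_W(O_K)$, being of finite index in $U_W(O_K)$, already contains a finite-index subgroup of $Z(U)(O_K)$ directly---no commutators required. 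The paper exploits this by writing the orthogonal decomposition $V=(Ev\oplus Ev')\oplus X$ and noting $W=(Ev\oplus Ev')\oplus(X\cap W)$, so that $H(W)$ and $H(W')$ are sub-Heisenberg groups sharing the full centre; the only thing to span is the abelianisation $X$, which is immediate.
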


\begin{proof} By  the non-degeneracy of  $h$ on $W''\subset  W\cap W'$
the space  $W''$ contains  a vector $v^*$,  also isotropic,  such that
$h(v,v^*)=1$. Write the orthogonal decomposition $V=(Ev + Ev^*) \oplus
X $.  Then $W=(Ev + Ev^*)\oplus X\cap W$ and similarly for $W'$.\\

Consider the filtration 
\[0\subset Ev \subset  E\oplus X \subset Ev\oplus X  \oplus Ev^*=V. \]
Denote the  corresponding Heisenberg group (the  unipotent subgroup of
$U(V)$ which  preserves the  flag and acts  by identity  on successive
quotients),  by   $H(V)$  and   its  integral  points   by  $H(V)(O_K)
=H_V(O_K)$.  The  group  $P\subset  U(V)$ which  preserves  the  above
partial  flag is  a parabolic  subgroup  and $H(V)$  is its  unipotent
radical.  Define  similarly, the smaller Heisenberg  groups $H(W)$ and
$H(W')$ and their integral points $H(W)(O_K)$ and $H(W')(O_K)$. \\

By  assumption, $H(W)\cap  \Gamma$  has finite  index in  $H(W)(O_K)$;
similarly for  $H(W')$.  The  two integral Heisenberg  groups generate
$H(W)(O_K)$ up to finite index, since two distinct vector subspaces of
codimension one,  span the whole space.   We thus find  that $\Gamma $
contains a subgroup of finite  index in the integral unipotent radical
of a parabolic $K$ subgroup. \\

Similarly, we  find a  finite index subgroup  of an  opposite integral
unipotent  radical  in  the  group  $\Gamma$.  Therefore,  by  Theorem
\ref{bamise}    applied   to    $SU(h)$,    $\Gamma\cap   SU(h)$    is
arithmetic. Since  $U(h)(O_K)$ and $SU(h)(O_K)$  are commensurable, it
follows that $\Gamma$ is an arithmetic subgroup of $U(h)(O_K)$.
\end{proof}

\subsection{Groups Generated by Complex Reflections}

The results in this subsection deal with irreducibility of the action
of groups generated  by complex reflections on a  complex vector space
(sometimes ones equipped with a hermitian form).  They are essentially
well  known, but  we need  a version  involving additive  subgroups of
vector groups stable under complex reflections and therefore we record
them here. \\

Let $V$  be an $n$-dimensional vector  space over a field  $K$. We say
that an element $T\in GL(V)$  is a {\it generalised reflection} if the
endomorphism  $T-1$ has  one  dimensional image.   Suppose that  $T_1,
\cdots, T_n$ are generalised reflections such that $(T_i-1)(V)=K \e_i$
and $\{\e_i: 1\leq  i \leq n\}$ form a basis of  $V$.  Assume that for
each $i\leq n-1$, $(T_i-1)(\e_{i+1})=b_i\e_i$ with $b_i\neq 0$. Assume
also that if $i\geq  2$ then $(T_i-1)(\e_{i-1})=a_i\e_i$ with $a_i\neq
0$. \\

\begin{lemma} \label{invariantvector}  [1] Let $V$ and $T_i$  be as in
the preceding and $\Delta $ the group generated by the transformations
$\{T_i;  1\leq i  \leq  n\}$. Denote  by  $V^{\Delta }$  the space  of
vectors  in   $V$  invariant   under  $\Gamma$.   Then   the  quotient
$V/V^{\Delta}$ is an irreducible representation of $\Delta$. \\

[2] If in addition, we assume that $T_i(\e_j)=\e_j$ for all $i,j$ with
$\mid  i-j\mid \geq  2$, then  the  space $V^{\Delta  }$ of  invariant
vectors has dimension at most one.

\end{lemma}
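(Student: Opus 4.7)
The plan is a direct propagation argument that exploits the ``chain'' structure imposed on the reflections by the hypotheses $a_i, b_i \neq 0$.

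For part [1], I would show that every $\Delta$-invariant subspace $W \subseteq V$ that strictly contains $V^\Delta$ must equal $V$ (which is what irreducibility of $V/V^\Delta$ amounts to). Picking $w \in W \setminus V^\Delta$, some $T_i$ satisfies $(T_i - 1)w \neq 0$, and since $(T_i - 1)(V) = K\e_i$ and $W$ is $T_i$-stable, I conclude $\e_i \in W$. The chain structure then takes over: for $i \leq n-1$ the relation $(T_{i+1} - 1)(\e_i) = a_{i+1}\e_{i+1}$ with $a_{i+1} \neq 0$ forces $\e_{i+1} \in W$, and for $i \geq 2$ the relation $(T_{i-1} - 1)(\e_i) = b_{i-1}\e_{i-1}$ with $b_{i-1} \neq 0$ forces $\e_{i-1} \in W$. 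Iterating in both directions places every $\e_j$ in $W$, so $W = V$.

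For part [2], let $\mu_i \in K$ be the scalar with $T_i \e_i = \mu_i \e_i$ (which exists because $(T_i - 1)\e_i \in K\e_i$). For $v = \sum_j c_j \e_j \in V^\Delta$, the extra assumption $T_i \e_j = \e_j$ for $|i - j| \geq 2$ restricts the calculation of $(T_i - 1)v$ to the indices $j = i-1, i, i+1$ and yields
\[
(T_i - 1)v \;=\; \bigl(a_i c_{i-1} + (\mu_i - 1) c_i + b_i c_{i+1}\bigr)\, \e_i,
\]
with the conventions $c_0 = 0$ and $c_{n+1} = 0$ at the endpoints. Setting $(T_i - 1)v = 0$ for $i = 1, 2, \ldots, n-1$ produces a three-term recursion in which $c_{i+1}$ is uniquely determined by $c_{i-1}$ and $c_i$, since $b_i \neq 0$. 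Thus once $c_1$ is fixed, all $c_j$ are determined, and hence $\dim V^\Delta \leq 1$.

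I do not anticipate a genuine obstacle: the hypotheses $a_i \neq 0$ and $b_i \neq 0$ are tailored exactly so that both the propagation argument in [1] and the recursion in [2] go through. The one place to exercise care is at the endpoints of the chain, since $a_i$ is defined only for $i \geq 2$ and $b_i$ only for $i \leq n-1$; this is accounted for by reading the recursion off only for $i = 1, \ldots, n-1$ (producing $c_2, \ldots, c_n$ in terms of $c_1$), with $i = n$ then imposing at most a single consistency condition on the resulting one-parameter family.
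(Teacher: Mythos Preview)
Your proposal is correct and follows essentially the same approach as the paper: in part [1] both arguments obtain some $\e_i \in W$ from a non-invariant vector and then propagate along the chain using $a_i, b_i \neq 0$ to get all $\e_j \in W$; in part [2] both use the three-term relation coming from $(T_i-1)v=0$ to show that an invariant vector is determined by its first coordinate (the paper phrases this as injectivity of the map $v\mapsto x_1$, you as a recursion solving $c_{i+1}$ in terms of $c_{i-1},c_i$).
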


\begin{proof}  Suppose  that  $W\neq  V$  is  a  $\Delta  $  invariant
subspace.  If $\e_j\in W$ for some $j$, by the $T_{j-1}$ invariance of
$W$,  the  vector  $(T_{j-1}-1)(\e_j)$  lies in  $W$.  By  assumption,
$(T_{j-1}-1)\e_j$ is a non-zero multiple of $\e_{j-1}$; therefore, $W$
contains  $\e_{j-1}$.  Similarly,  $(T_{j+1}-1)\e_j  \in W$  and is  a
non-zero multiple  of $\e_{j+1}$  if $j\leq n-1$.   Therefore, if $\e_j$
lies in  $W$ for some  $j$, then $\e_1,  \cdots, \e_n$ lie in  $W$ and
hence $W=V$, a contradiction. \\

Consequently, $W$  does not contain  $\e_j$ for any $j$.  Consider the
image $(T_i-1)W \subset W$. If the image is non-zero, then it consists
of all  multiples of  $\e_i$ and this  is impossible by  the preceding
paragraph. Therefore $(T_i-1)W=0$, which  means that $T_i$ is identity
on  $W$  for   every  $i$.  In  other  words,   $W$  is  contained  in
$V^{\Delta}$. This proves part [1] of the lemma. \\

We  will  now  prove  part   [2],  assuming  (as  in  part  [2])  that
$T_i(\e_j)=\e_j$ if  $\mid i -  j \mid \geq  2$.  Suppose that  $v \in
V^{\Delta  } $  is of  the form  $v=x_2\e_2+\cdots+x_n\e_n$  (i.e. the
coefficient $x_1$ of $\e_1$ is zero). Applying $(T_1-1)$ to $v$ we get
$0=(T_1-1)v= x_2b_1\e_1$ whence $x_2=0$.  Now applying $T_2-1$ to $v$,
we  get $0=(T_2-1)v=  x_3b_2\e_2$.  Therefore  $x_3=0$,  $\cdots$.  An
easy induction now establishes that all the $x_i$ are zero.  Hence the
linear  map  $V^{\Delta }\ra  K$  given  by  $v=\sum _{i=1}^n  x_i\e_i
\mapsto x_1$  (the first coordinate function)  is injective. Therefore
the second part of the lemma follows.

\end{proof}

We  now prove a  version of  Lemma \ref{invariantvector}  for additive
subgroups of  a vector  group stable  under the $T_i$.  Let $A$  be an
integral  domain  and  $\Omega   $  a  field  of  characteristic  zero
containing $A$; suppose there is  an involution of the field $\Omega $
(field  automorphism of  order two)  which  stabilises $A$  and $V$  a
finite dimensional $\Omega $ vector space of dimension $n$ with a {\it
non-degenerate} hermitian  form $h$  with respect to  this involution.
Suppose  that  $\{T_i\in  GL(V):  1\leq  i  \leq  n\}$  preserve  this
hermitian form such that the space  of vectors fixed under $T_i$ is of
codimension one;  then the image of  $T_i-1$ is spanned  by the unique
(up  to scalar multiples)  eigenvector for  $T_i$ with  eigenvalue not
$1$, denote it $\e_i$. \\

We will  assume that the  $\{\e_i: 1\leq i  \leq n\}$ form a  basis of
$V$, and that for each $i$,
\[ T_i(\e_{i+1})=a_i\e_{i+1}+ b_i\e_i~~{\rm with}~~ b_i\neq 0,\] 
\[T_i(\e_{i-1})= c_i \e_i+d_i\e_{i-1}~~{\rm with} ~~c_i\neq 0.\]

Under these assumptions we have the 

\begin{lemma}    \label{complexreflections}   Let    $\Gamma   \subset
U(h)(A)\subset  U(V)$  be  a   subgroup  generated  by  these  complex
reflections $T_i$. Let $W$ be an additive subgroup of the vector group
$V$, such  that $W$  is stable under  the operators $T_i$.  Then there
exists a scalar $\lambda \neq 0$ in the integral domain $A$ such that
\[\lambda \e_1, \cdots, \lambda  \e_n \in W.\] In particular, $\Gamma$
acts irreducibly  on the  vector space $V$;  the representation  is in
fact absolutely irreducible.
\end{lemma}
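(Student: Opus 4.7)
The idea is to combine the reflection structure of the $T_i$ with non-degeneracy of $h$ to produce, inside the additively stable subgroup $W$, explicit multiples of each basis vector $\e_j$.

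My first step would be to rule out nonzero $\Delta$-invariants in $V$. If $w \in V$ satisfies $T_i w = w$ for every $i$, then since each $T_i$ preserves $h$ one has $h((T_i-1)v, w) = h(T_i v, T_i w) - h(v,w) = 0$ for all $v \in V$. Because the image of $T_i - 1$ is the line $\Omega \e_i$, this yields $h(\e_i, w) = 0$ for all $i$, and non-degeneracy of $h$ together with $\{\e_i\}$ being a basis forces $w=0$. Hence $V^{\Delta} = 0$. Given any nonzero $w \in W$, therefore, at least one $(T_k-1)w$ must be nonzero; since this element automatically lies in $W \cap \Omega \e_k$, we obtain $\mu \in \Omega^*$ with $\mu \e_k \in W$.

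Next I would propagate $\mu \e_k$ along the chain of reflections. The identity $T_{k+1}(\mu \e_k) = \mu \e_k + \mu c_{k+1} \e_{k+1}$ together with the $T_{k+1}$-stability of $W$ gives $\mu c_{k+1} \e_{k+1} \in W$; iterating with $T_{k+2}, \ldots, T_n$ and symmetrically (using the $b_i$'s) with $T_{k-1}, \ldots, T_1$ produces elements $\mu (c_{k+1} \cdots c_l) \e_l \in W$ for $l > k$ and $\mu (b_{k-1} \cdots b_l) \e_l \in W$ for $l < k$. Because the $b_i, c_i$ are all nonzero elements of the integral domain $A$, these are nonzero scalar multiples of every $\e_j$ inside $W$.

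The final step is to combine these back-and-forth propagations, together with compositions like $(T_k - 1)(T_{k+1}-1)$ applied to $\mu \e_k$ (which returns elements of the form $\mu c_{k+1} b_k \e_k \in W$), to extract a single common scalar $\lambda \in A \setminus \{0\}$ with $\lambda \e_j \in W$ for every $j$. Absolute irreducibility then drops out automatically: any $\Gamma$-stable $\Omega$-subspace of $V$, viewed as an additive subgroup, must contain a basis of $V$ and hence equals $V$, and the whole argument is insensitive to scalar extension of $\Omega$.

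I expect the main obstacle to be this last scalar-unification: having $\mu P_j \e_j \in W$ with different $P_j \in A$ is easy, but producing a \emph{single} $\lambda \in A$ for which $\lambda \e_j \in W$ holds simultaneously for every $j$ requires using the $T_i$-stability of $W$ to realise multiplication by the appropriate elements of $A$ as group-theoretic operations (since $W$ is only known to be a $\Z$-module). The preceding steps---non-degeneracy giving $V^{\Delta}=0$, and propagation through adjacent reflections---are by contrast quite direct.
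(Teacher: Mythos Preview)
Your approach is essentially identical to the paper's: use nondegeneracy of $h$ together with unitarity of the $T_i$ to see that a nonzero $W$ cannot be killed by every $T_i-1$ (so some nonzero $\mu\e_k$ lies in $W$), and then propagate via $(T_{k\pm 1}-1)$ to obtain nonzero multiples of every $\e_j$ inside $W$; irreducibility and absolute irreducibility follow immediately. Your write-up is in fact somewhat more careful than the paper's on one point: the paper's proof stops after producing, for each $j$, an individual nonzero $\lambda_j$ with $\lambda_j\e_j\in W$ and simply declares ``this proves the first part of the lemma'' --- it never carries out the scalar-unification step that you correctly flag as the main obstacle. So your concern is legitimate, but it is a gap in the paper's argument as well, not something the paper resolves and you miss; for the irreducibility conclusion (and for the uses made later) the individual $\lambda_j$'s already suffice.
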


\begin{proof}  Not all  the images  $(T_i-1)W$ can  be zero;  for that
would mean that all the vectors  $w$ in $W$ are point-wise fixed by all
the  $T_i$;  since distinct  eigenspaces  of  a  unitary operator  are
orthogonal, this means that $w$  is orthogonal to $\e_i$ for each $i$;
therefore, $w=0$ since $h$ is nondegenerate. \\

The  image of $(T_i-1)$  consists of  multiples of  $\e_i$. Therefore,
there exists an integer $i$ such that $W$ contains a multiple $\lambda
_i \e_i$ for  some $\lambda _i \neq 0$. Since $W$  is stable under all
the $T_j$, the equation
\[T_{i-1}(\e_i)=  a_{i-1}\e_i   +  b_{i-1}\e_{i-1},\]  shows   that  a
multiple,  namely $b_{i-1}\lambda _i  \e_{i-1}=\lambda _{i-1}\e_{i-1}$
lies  in $W$,  $\cdots,$  multiples  of $\e_1,  \cdots,  \e_i$ lie  in
$W$. Similarly, the equation
\[T_{i-1}(\e_i)=c_{i+1}(\e_{i+1})+d_{i+1}\e_i,~~{d_{i+1}\neq       0},\]
shows that a nonzero multiple of  $\e_{i+1}$ lies in $W$, $\cdots, $ a
multiple of  $\e_n$ lies  in $W$.  This proves the  first part  of the
lemma.\\ 

The  foregoing  proof also  shows  the  irreducibility  for any  field
$\Omega  $ with  an involution  containing  $A$ in  its fixed  points.
Since  the  fixed field  of  $\Omega $  under  the  involution may  be
embedded  in an  algebraically  closed  field $F$,  and  over $F$  the
unitary group becomes $GL_n(F)$, it follows that the irreducibility is
true  in this  case as  well: the  action of  $\Gamma $  is absolutely
irreducible.
\end{proof}

We will now derive a corollary of Lemma \ref{complexreflections} which
will  be  used  later  in  the  proof of  (part  [3]  of)  Proposition
\ref{irreducible}.    We  will  keep   the  notation   preceding  (and
including)   Lemma  \ref{complexreflections}.  Denote   by  $\Z[\Gamma
](\e_i)$  the additive  subgroup  of  $A^n$ spanned  by  the $\Gamma  $
translates of the vector $\e_i$.

\begin{corollary} \label{fullcorollary} Fix $1\leq  i \leq n$. Let $H_i$
denote a subgroup  of the group $A^*$ of units  of the integral domain
$A$ such that for every $h\in H_i$ there exists an element $\gamma \in
\Gamma $ such that $h\e_i= \gamma (\e_i)$. \\

[1] There exists  a $\lambda =\lambda _i \neq 0$ in  $A$ such that for
every $j$, we have $H_i\lambda \e_j \subset \Z [\Gamma ](\e_i)$. \\

[2] Suppose  for {\rm each} $i$,  $H_i$ is as  in [1]. Let $H$  be the
subgroup  of  $A^*$  generated   by  $H_1,  \cdots  H_n$  {\rm  (}then
$H=H_1\cdots H_n$ is the product{\rm  )}. Then there exists a $\lambda
\neq  0$ such  that for  every  $h\in H$  and every  $j$, the  element
$\lambda  h\e_j$ lies  in the  $\Gamma  $ module  generated by  $\e_1,
\cdots, \e_n$.
\end{corollary}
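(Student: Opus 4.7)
The plan is to deduce the corollary directly from Lemma~\ref{complexreflections} by exploiting the fact that the $\Gamma$-action on $V$ is $\Omega$-linear, so that scalar multiplication by any element of $A^*$ commutes with every $\gamma \in \Gamma$.

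For part [1], I would apply Lemma~\ref{complexreflections} to the additive subgroup $W := \Z[\Gamma](\e_i) \subset A^n$, which is manifestly $\Gamma$-stable and hence stable under every $T_j$. The lemma furnishes a nonzero $\lambda_i \in A$ with $\lambda_i \e_j \in \Z[\Gamma](\e_i)$ for every $j$; writing $\lambda_i \e_j = \sum_k n_k \gamma_k(\e_i)$ with $n_k \in \Z$ and $\gamma_k \in \Gamma$, and taking any $h \in H_i$ together with $\gamma \in \Gamma$ such that $h\e_i = \gamma(\e_i)$, one commutes the scalar $h$ through each $\Omega$-linear $\gamma_k$ to obtain
\[h \lambda_i \e_j = \sum_k n_k\, h\,\gamma_k(\e_i) = \sum_k n_k\, \gamma_k(h\e_i) = \sum_k n_k\, \gamma_k\gamma(\e_i) \in \Z[\Gamma](\e_i),\]
which is exactly the assertion $H_i \lambda_i \e_j \subset \Z[\Gamma](\e_i)$.

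For part [2], I would iterate [1] factor by factor. Since $A^*$ is abelian, every $h \in H$ can be written as $h = h_1 h_2 \cdots h_n$ with $h_i \in H_i$; set $\lambda = \lambda_1 \lambda_2 \cdots \lambda_n$. Applying [1] at index $n$ writes $\lambda_n h_n \e_j$ as a $\Z$-combination $\sum_k n_k \gamma_k(\e_n)$; commuting $h_{n-1}$ past each $\gamma_k$ via $\Omega$-linearity and then invoking [1] at index $n-1$ with test vector $\e_n$ brings $\lambda_{n-1}\lambda_n h_{n-1} h_n \e_j$ into $\Z[\Gamma](\e_{n-1})$. Peeling off $h_{n-2}, \ldots, h_1$ in the same way, we conclude $\lambda h\e_j \in \Z[\Gamma](\e_1)$, which lies in the $\Gamma$-module $M$ generated by $\e_1, \ldots, \e_n$, as required.

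There is no serious obstacle once Lemma~\ref{complexreflections} is in hand; the argument is essentially bookkeeping. The one point requiring care is that the peel in [2] iterates cleanly: this works because each invocation of [1] leaves the output as a $\Z$-combination of $\Gamma$-translates of a single basis vector, on which the next scalar $h_{i-1}$ can again be commuted through the $\Gamma$-action, so the induction closes with no further hypotheses.
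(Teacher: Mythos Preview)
Your proof is correct and follows essentially the same approach as the paper: for [1], apply Lemma~\ref{complexreflections} to $W=\Z[\Gamma](\e_i)$ and use that scalars in $A^*$ commute with the $\Omega$-linear $\Gamma$-action to push $h$ inside, exactly as the paper does (the paper phrases this as $h\,\Z[\Gamma](\e_i)=\Z[\Gamma](h\e_i)=\Z[\Gamma](\gamma\e_i)$); for [2], the paper simply says ``an easy induction shows that [1] implies [2]'', and your peeling argument with $\lambda=\lambda_1\cdots\lambda_n$ is precisely that induction spelled out.
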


\begin{proof} An  easy induction  shows that [1]  implies [2].  We now
prove [1]. \\

Fix  $i$. By  Lemma \ref{complexreflections},  there exists  a nonzero
$\lambda \in  A$ such that  $\lambda \e_j \in \Z[\Gamma  ](\e_i)$. Let
$h\in H_i$; by  assumption, there exists $\gamma \in  \Gamma$ such that
$\gamma \e_i= h\e_i$. Therefore, $\lambda (h \e_j)=h (\lambda \e_j)\in
h  (\Z[\Gamma  ](\e_i)=  \Z[\Gamma  ](h\e_i)\subset  \Z[\Gamma  ]\gamma
\e_i=\Z[\Gamma ](\e_i)$.
\end{proof}

\subsection{Some results on algebraic groups}

Let $U\subset SL_n(\C)$ be the unipotent algebraic group consisting of
the set of matrices $u$ of the form
\[u=   \begin{pmatrix}   1   &   x_2 & \cdots & x_n   \\  0   &
1 & \cdots & 0 \\ \cdots & \cdots & \cdots \\
0 & 0 & \cdots & 1\end{pmatrix}.\] 
This is the subgroup which preserves the partial flag
\[0 \subset \C e_1 \subset \C^n,\]
and acts trivially on successive quotients. 

\begin{proposition} \label{Uzariskidense} Let $H\subset SL_n(\C)$ be a
reductive  algebraic subgroup which  contains the  unipotent algebraic
group $U$.  Then $H=SL_n(\C)$.
\end{proposition}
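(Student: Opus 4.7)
The plan is to show successively that $H$ may be assumed connected, that it acts irreducibly on $V=\C^n$, that its Lie algebra $\mathfrak{h}$ is semisimple and in fact simple, and then to invoke a classical result on irreducible groups containing transvections to conclude $H=SL_n$.

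After replacing $H$ by its identity component $H^{\circ}$ (which still contains the connected unipotent $U$), I would classify the $U$-invariant subspaces of $V$. If $v=\sum c_ie_i$ has $c_j\neq 0$ for some $j\geq 2$, then $(I+xe_{1j})v-v=xc_je_1$ lies in any $U$-stable subspace containing $v$, forcing $e_1$ into it. Thus every $U$-stable subspace of $V$ either contains $\C e_1$ or is contained in $\C e_1$. Since $H$ is reductive, any $H$-invariant $W\subset V$ has an $H$-invariant complement $W'$; a short case split on whether $W=\C e_1$ or $W\supsetneq\C e_1$ rules out any nontrivial decomposition, yielding irreducibility of the $H$-action on $V$.

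Schur's lemma now gives $\dim\mathrm{End}_H(V)=1$, so the trivial isotype of $V\otimes V^*$ is one-dimensional and $\mathfrak{sl}_n=V\otimes V^*/\C$ has no $H$-invariants. In particular $Z(\mathfrak{h})\subset \mathfrak{h}\subset\mathfrak{sl}_n$ vanishes and $\mathfrak{h}$ is semisimple. Writing $\mathfrak{h}=\bigoplus_i\mathfrak{h}_i$, the irreducible module $V$ decomposes as an outer tensor product $\bigotimes V_i$ with each $V_i$ a faithful irreducible $\mathfrak{h}_i$-module of dimension at least $2$. A rank-one element of $\mathfrak{h}$ concentrated in a single factor $\mathfrak{h}_i$ acts on $V$ with rank $\prod_{j\ne i}\dim V_j\geq 2^{k-1}$, and an element with nonzero components in multiple factors has rank at least this large (a short direct argument shows that $a\otimes I + I\otimes b$ can be rank one only if $a=b=0$). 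Hence rank-one forces $k=1$, and so $\mathfrak{h}$ is simple.

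The final step is to apply the classical theorem (due to McLaughlin, Kantor, Wagner, and Zalesski in various forms) on irreducible linear groups generated by transvections: a connected simple irreducible algebraic subgroup of $SL(V)$ over $\C$ containing a transvection is either $SL(V)$ itself or a conjugate of $Sp(V,J)$ for some non-degenerate alternating form $J$. To rule out the symplectic case I would check directly that $\mathfrak{u}$ preserves no non-degenerate bilinear form: the equations $e_{1j}^T J+Je_{1j}=0$ for $j=2,\ldots,n$ (assuming $n\geq 3$; the case $n=2$ is trivial since $SL_2=Sp_2$) force $J_{1b}=J_{a1}=0$ for all indices $a,b$, so $J$ has vanishing first row and column and is degenerate. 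Therefore $H=SL_n(\C)$. The main obstacle is the appeal to the transvection theorem; a more self-contained alternative would be to apply Jacobson--Morozov inside $\mathfrak{h}$ to each $e_{1j}$, producing rank-one lowering elements $f^{(j)}=v^{(j)}(w^{(j)})^T\in\mathfrak{h}$ with $(f^{(j)})_{j1}=1$, and then to extract all of $\mathfrak{u}^-$ from these via iterated brackets with $\mathfrak{u}$. Since $\mathfrak{u}$ and $\mathfrak{u}^-$ together generate $\mathfrak{sl}_n$ as a Lie algebra, this would complete the proof.
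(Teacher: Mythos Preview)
Your argument is essentially correct, though it takes a considerably longer route than the paper's. You establish irreducibility of $V$, then semisimplicity and simplicity of $\mathfrak{h}$, and finally invoke the classification of irreducible groups containing a transvection (with a Jacobson--Morozov alternative). Each step is sound, though the rank argument ruling out a nontrivial tensor decomposition (that $a\otimes I+I\otimes b$ cannot have rank one when both tensor factors have dimension $\geq 2$) deserves a couple more lines to be fully convincing.

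The paper's proof bypasses all of this with a single observation. Since $H$ is reductive, one has an $H$-module splitting $\mathfrak{sl}_n=\mathfrak{h}\oplus\mathfrak{h}'$ under the adjoint action. If $\mathfrak{h}'\neq 0$, the unipotent group $U$ acting on $\mathfrak{h}'$ must fix a nonzero vector $X\in\mathfrak{h}'$; equivalently, $X$ centralises $\mathfrak{u}=\mathrm{Lie}(U)$. A direct root-space computation (using that $\mathfrak{u}$ is spanned by the $E_{1j}$ with $j\geq 2$, and that $[E_{1i},E_{ij}]=E_{1j}\neq 0$) shows that the centraliser of $\mathfrak{u}$ in $\mathfrak{sl}_n$ is exactly $\mathfrak{u}$ itself. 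Hence $X\in\mathfrak{u}\subset\mathfrak{h}$, contradicting $X\in\mathfrak{h}'$. Thus $\mathfrak{h}'=0$ and $H=SL_n$.

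The paper's approach is self-contained and avoids any external classification; the key idea---apply ``a unipotent group has a fixed vector in any nonzero representation'' to the adjoint complement, then compute the centraliser---is both shorter and more robust. Your approach, while heavier, does make the irreducibility and simplicity of $H$ explicit along the way, and reaching for the transvection classification is a natural instinct; but here it is unnecessary machinery.
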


\begin{proof} Denote by $T$ the  group of diagonals in $SL_n$. The Lie
algebra of $SL_n$  splits into eigenspaces for the  action of $T$, and
the eigenvectors  are $E_{ij}$ and $E_{ii}-E_{jj}$  where $E_{ij}$ is,
in the  usual notation, the $n\times  n$ matrix whose  $ij$-th entry is
$1$ and all other entries are zero. Then the Lie algebra $\mathfrak u$
of $U$ is spanned by $E_{1i}$ with $1<i$. \\

Let $\mathfrak  h$ be  the Lie  algebra of $H$.  Write the  direct sum
decomposition $sl_n(\C)={\mathfrak h}\oplus {\mathfrak h}'$ as modules
under the  adjoint action  of $H$  (we use the  assumption that  $H$ is
reductive).  Since $U$ is  unipotent, if ${\mathfrak h}'$ is non-zero,
there exists an $X\in {\mathfrak h}'$,  $X \neq 0$, which is fixed by
$U$.  This means  that  the linear  transformation  $X$ commutes  with
$U$. \\

The centraliser $\mathfrak z$ of $U$ in $sl_n(\C)$ is stable under the
action of the diagonals $T$, since $U$ is $T$-stable. Hence $\mathfrak
z$ splits  into eigenspaces  for $T$. Since  $E_{1i}\in Lie  (U)$, the
equation  $[E_{1i},E_{ij}]=E_{1j}\neq 0$  shows  that the  centraliser
$\mathfrak  z$ cannot  contain $E_{ij}$  with $i\geq  2$.  It  is also
clear that the  lie algebra of $T$ acts faithfully  on $Lie (U)$ under
the adjoint action; therefore,
\[{\mathfrak  z}   =\oplus  \C  _{j\geq  2}   E_{1j}  =Lie  (U)\subset
{\mathfrak  h}.\] Hence  $X$  must  lie in  ${\mathfrak  h}$; this  is
impossible and therefore, ${\mathfrak h}'=0$.

\end{proof}

Let $V$ be an $n$-dimensional vector space over $\C$ and $W,W'$ be two
distinct  codimension  one  subspaces  and  suppose  we  are  given  a
decomposition $V=W\oplus  \C v$ and  $V=W'\oplus \C v'$.   Assume that
$v,v'$  are linearly  independent  over $\C$.   We  will view  $SL(W)$
(resp.   $SL(W')$)  as  the  subgroup  of elements  of  $SL(V)$  which
stabilise  the subspace  $W$  (resp.   $W'$) and  fix  the vector  $v$
(resp. $v'$).

\begin{lemma} $SL(V)$ is generated by $SL(W)$ and $SL(W')$. 
\end{lemma}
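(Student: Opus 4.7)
My plan is to combine an irreducibility argument with Proposition~\ref{Uzariskidense}. Set $H := \langle SL(W), SL(W')\rangle$. Since $SL(W)$ and $SL(W')$ are images in $SL(V)$ of the irreducible closed variety $SL_{n-1}$ through the identity, the subgroup $H$ is itself closed and connected in $SL(V)$ by the standard result on subgroups generated by connected closed subvarieties; so it suffices to identify $H$ as an algebraic subgroup.

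First I would check that $H$ acts irreducibly on $V$ (assume $n \geq 3$, otherwise the subgroups are already trivial). Since $SL(W) \cong SL_{n-1}$ acts on $V = W \oplus \C v$ as standard $\oplus$ trivial, its invariant subspaces are exactly $\{0, \C v, W, V\}$, and analogously for $SL(W')$ we obtain $\{0, \C v', W', V\}$. The hypotheses that $v, v'$ are linearly independent, that $W \neq W'$, and the dimension mismatch $\dim \C v = 1 \neq n-1 = \dim W'$ rule out every common nonzero proper subspace, proving irreducibility. The unipotent radical of $H$ must then be trivial, for otherwise its fixed-point space (nonzero by Lie--Kolchin) would be a proper $H$-invariant subspace; hence $H$ is reductive.

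Finally I would analyse $\mathfrak h := \operatorname{Lie}(H)$ as an $\mathfrak{sl}(W)$-module. The decomposition $\mathfrak{sl}(V)/\mathfrak{sl}(W) \cong W \oplus W^* \oplus \C$ (with the $\C$ summand being the trace-correction line) reduces the problem to classifying the $\mathfrak{sl}(W)$-submodules $\mathfrak m \subset W \oplus W^* \oplus \C$ for which $\mathfrak{sl}(W) + \mathfrak m$ is both bracket-closed and reductive. A short case inspection using $[W, W^*] \not\subset \mathfrak{sl}(W)$ shows that the only such extensions are $\mathfrak{sl}(W)$, $\mathfrak{sl}(W) \oplus \C$, and $\mathfrak{sl}(V)$; the first two preserve the splitting $V = W \oplus \C v$ and violate irreducibility. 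Therefore $\mathfrak h = \mathfrak{sl}(V)$; equivalently, $H$ contains the unipotent radical of the parabolic of $SL(V)$ stabilising the line $\C v$, which after choosing a basis with $e_1 = v$ is precisely the group $U$ of Proposition~\ref{Uzariskidense}. Applying that proposition to the reductive group $H$ yields $H = SL(V)$.

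The main obstacle is the Lie-algebra bookkeeping in the last step: one must confirm that each of $\mathfrak{sl}(W) \oplus W$, $\mathfrak{sl}(W) \oplus W^*$, $\mathfrak{sl}(W) \oplus W \oplus \C$ and $\mathfrak{sl}(W) \oplus W^* \oplus \C$ carries a nonzero unipotent ideal, while $\mathfrak{sl}(W) \oplus W \oplus W^*$ fails to close under bracket because the bracket of $W$ and $W^*$ has a nonzero component along the trace-correction $\C$. Only after this classification does the appeal to Proposition~\ref{Uzariskidense} become available.
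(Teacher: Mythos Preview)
Your argument is correct and reaches the same conclusion, but by a different and somewhat longer route than the paper. The paper works directly at the Lie algebra level using the intersection $X = W \cap W'$: writing $E = \C w \oplus \C w'$ with $w \in W \setminus X$, $w' \in W' \setminus X$, one decomposes $\mathfrak{sl}(V) = \mathfrak{sl}(X) \oplus (X \otimes E^*) \oplus (X^* \otimes E) \oplus \mathfrak{sl}(E) \oplus Y$ and observes that already the \emph{subspace} $\mathfrak{sl}(W) + \mathfrak{sl}(W')$ contains both off-diagonal blocks $X \otimes E^*$ and $X^* \otimes E$; bracketing these produces $\mathfrak{sl}(E)$ and the missing one-dimensional piece $Y$. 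No reductivity argument, no classification of intermediate subalgebras, and no appeal to Proposition~\ref{Uzariskidense} is needed. Your approach trades this explicit computation for structural input (irreducibility $\Rightarrow$ reductive, then classify reductive overalgebras of $\mathfrak{sl}(W)$), which is conceptually pleasant but heavier.

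Two small points. First, your case list is incomplete when $n=3$: then $W \cong W^*$ as $\mathfrak{sl}_2$-modules, so $W \oplus W^*$ has a one-parameter family of ``diagonal'' irreducible submodules $\mathfrak m_c$ beyond $W$ and $W^*$ themselves. One checks that $\mathfrak{sl}(W) \oplus \mathfrak m_c$ is not bracket-closed (the bracket of two generators lands in the trace-correction line $\C$) and that adding $\C$ does not help (since $[\C, \mathfrak m_c] \not\subset \mathfrak m_c$), so the conclusion survives, but this case should be mentioned. Second, once your classification yields $\mathfrak h = \mathfrak{sl}(V)$ you are finished, since $H$ is connected; the subsequent appeal to Proposition~\ref{Uzariskidense} is redundant rather than ``equivalent'', and you may simply drop it.
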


\begin{proof}  Put $X=W\cap  W'$.  Then by  our  assumptions, $X$  has
codimension  one  in  both  $W$  and  $W'$.  Fix  a  vector  $w\in  W$
(resp.  $w'\in  W'$)  which  does   not  lie  in  $X$.   We  have  the
decomposition $W=X\oplus  \C w$ and  $W'=X\oplus \C w'$.   Write $E=\C
w\oplus  \C  w'$.    Then  $sl(V)=sl(X)\oplus  (X  \otimes  E^*)\oplus
(X^*\otimes E)\oplus sl(E)  \oplus Y$. Here $Y$ is  the space of trace
zero endomorphisms of $V$ which act by a scalar on $X$ and by a scalar
on $E$. \\

If $h$ is the sub-algebra  generated by $sl(W)$ and $sl(W')$, then $h$
(in  fact  the  subspace  $sl(W)+sl(W')$) contains  $E\otimes  X$  and
$X^*\otimes  E$  as  subspaces;  the sub-algebra  generated  by  these
subspaces contains  $sl(E)$  (it is  easy to capture the  remaining one
dimensional space $Y$ in the subalgebra $h$). Therefore $h=sl(V)$.
\end{proof}

\subsection{Products} The following Lemma  is proved in \cite{Ve2} and
will be used in deducing the arithmeticity of the monodromy in Theorem
\ref{cyclicmonodromy}  from the  arithmeticity  of the  images of  the
Gassner     representation    at     roots    of     unity    (Theorem
\ref{purebraidmainth}). After it was obtained, we learnt that this was
already  proved (in  roughly  the  same form)  in  \cite{Looi2} and  in
\cite{Gru-Lub}. \\

Suppose $X$  is a finite indexing  set and for each  element $p\in X$,
let $K_p$ be  a number field and $G_p$ be  an absolutely almost simple
group defined  over $K_p$ with  $\infty -rank (G_e)\geq 2$.  We assume
that if  $e,f \in X$ are  distinct elements of $X$,  then either $K_e$
and $K_f$ are  not isomorphic as number fields or  $G_e$ and $G_f$ are
not isomorphic  as algebraic groups over $K_e\simeq  K_f$ (both groups
thought of as algebraic groups over the same field $K_e\simeq K_f$).

\begin{lemma} \label{products} With the preceding assumptions, suppose
$\Gamma \subset \prod  _{e\in X} G_e(O_e)$ is a  subgroup of a product
of higher rank  arithmetic groups. Assume that for  each $e\in X$, the
projection of $\Gamma $ in  $G_e(O_e)$ has finite index in $G_e(O_e)$.
Then $\Gamma $ has finite index in the product.
\end{lemma}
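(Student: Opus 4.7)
The plan is to proceed by induction on the cardinality $|X|$. The base case $|X|=1$ is precisely the hypothesis that $\pi_{e_0}(\Gamma)$ has finite index. For the inductive step, fix $e_0\in X$ and set $Y=X\setminus\{e_0\}$. The projection $\Gamma'$ of $\Gamma$ to $\prod_{f\in Y}G_f(O_f)$ satisfies the hypotheses of the lemma on the smaller index set $Y$, so by induction $\Gamma'$ has finite index in $\prod_{f\in Y}G_f(O_f)$. Replacing each $G_e(O_e)$ by the finite-index subgroup $\pi_e(\Gamma)$, we may assume that $\Gamma$ surjects onto every factor; it is then enough to show $\Gamma$ has finite index in the product.

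Set $N=\ker\bigl(\Gamma\to G_{e_0}(O_{e_0})\bigr)$, viewed as a subgroup of $\prod_{f\in Y}G_f(O_f)$. Once $N$ is shown to be of finite index in this smaller product, the surjection $\Gamma\to G_{e_0}(O_{e_0})$ will force $\Gamma$ to have finite index in $\prod_{e\in X}G_e(O_e)$. For each $f\in Y$, the projection $N_f$ of $N$ to $G_f(O_f)$ is normal in $\pi_f(\Gamma)=G_f(O_f)$. Since $\infty$-rank of $G_f$ is at least $2$, the group $G_f(O_f)\subset\prod_{v\mid\infty}G_f(K_{f,v})$ is an irreducible higher-rank arithmetic lattice, and Margulis' normal subgroup theorem forces $N_f$ to be either central (hence finite) or of finite index.

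The critical step is to rule out the possibility that $N_f$ is finite for some $f\in Y$. If this occurred, the composition $\Gamma\to G_{e_0}(O_{e_0})\times\bigl(G_f(O_f)/N_f\bigr)$ would exhibit a virtual isomorphism between the higher-rank irreducible lattices $G_{e_0}(O_{e_0})$ and $G_f(O_f)$. By Margulis' super-rigidity theorem (equivalently, by Mostow--Prasad strong rigidity), any such abstract commensuration extends to an algebraic isomorphism of the ambient semisimple groups after an identification $K_{e_0}\simeq K_f$ of the base number fields. This would contradict the standing hypothesis that the pairs $(K_{e_0},G_{e_0})$ and $(K_f,G_f)$ are non-isomorphic.

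Consequently $N_f$ has finite index in $G_f(O_f)$ for every $f\in Y$, so the inductive hypothesis applied to $N\subset\prod_{f\in Y}G_f(O_f)$ yields that $N$ has finite index in the product, completing the induction. The main obstacle is the rigidity step in the third paragraph: it is precisely the pairwise non-isomorphism hypothesis that excludes ``diagonal-type'' subgroups inside products of arithmetic lattices, and translating that algebraic hypothesis into a statement about abstract subgroups of the lattice requires input strictly beyond the normal subgroup theorem, namely the super-rigidity (strong rigidity) of higher-rank lattices.
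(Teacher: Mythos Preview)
The paper does not supply its own proof of this lemma: it merely states that the result is proved in \cite{Ve2} (and records that essentially the same fact appears in \cite{Looi2} and \cite{Gru-Lub}). So there is no in-paper argument to compare against.

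Your argument is correct and is the standard one. A couple of small points worth making precise. First, in the last step you should spell out why $N$ finite index in $\prod_{f\in Y}G_f(O_f)$ together with $\Gamma/N\simeq G_{e_0}(O_{e_0})$ gives $\Gamma$ finite index in the full product: since $N=\Gamma\cap\bigl(\{1\}\times\prod_{f\in Y}G_f(O_f)\bigr)$ and $\Gamma$ surjects onto $G_{e_0}(O_{e_0})$, every coset of $\{1\}\times\prod_{f\in Y}G_f(O_f)$ in the full product meets $\Gamma$ in a single $N$-coset, so the index of $\Gamma$ in the product equals the index of $N$ in $\prod_{f\in Y}G_f(O_f)$. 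Second, in the rigidity step you pass from an abstract commensuration of $G_{e_0}(O_{e_0})$ with $G_f(O_f)$ to an isomorphism of the Weil restrictions $R_{K_{e_0}/\Q}G_{e_0}\simeq R_{K_f/\Q}G_f$ as $\Q$-groups (this is Margulis super-rigidity \cite{Mar}); since each $G_e$ is absolutely almost simple, a standard uniqueness property of restriction of scalars then forces an isomorphism $K_{e_0}\simeq K_f$ carrying $G_{e_0}$ to $G_f$, contradicting the standing hypothesis. With these details made explicit, the proof is complete.
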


\section{Action of the Braid Group on a Free Group}
\label{artinsection}

In this  section, we first  recall an action  of the braid group  on a
free  group, defined  by Artin  (see \ref{artinfree}).   This  gives an
action of the  pure braid group on the first  integral homology of the
commutator subgroup of  the free group. This action  of the pure braid
group is closely related to  the Gassner representation.  To make this
relation  precise, we need  to replace  the free  group with  the free
product $F$  of the  free group with  its abelianisation. There  is an
action of  the braid group on  the kernel of  the natural homomorphism
from $F$  onto the  abelianisation of the  free group.   The resulting
action of the pure braid group on the homology of {\it this} kernel is
identified      with     the      Gassner      representation     (see
\ref{gassnersubsection}).  We  can  then  define the  reduced  Gassner
representation and construct a convenient basis 
$\{\e_i: 1\leq i \leq n\}$ for it; we will
use this basis  in the next section to  specialise the reduced Gassner
representation at roots of unity. \\

\subsection{The pure braid group} \label{purebraidsubsection}
The braid  group $B_{n+1}$ on $n+1$  strands is the free  group on the
generators $s_1,s_2, \cdots, s_n$ modulo the relations
\[s_is_j=s_js_i~~(\mid   i-j\mid  \geq  2),   ~~{\rm  and}~~s_is_js_i=
s_js_is_j~~(\mid i-j\mid =1).\]

The symmetric  group $S_{n+1}$ on $n+1$  symbols is the  free group on
the generators $\sigma_1, \cdots, \sigma_n$ modulo the relations
\[\sigma _i  \sigma _j=\sigma _j\sigma  _i~~(\mid i-j \mid  \geq 2),~~
\sigma_i\sigma _j \sigma _i= \sigma _j \sigma _i \sigma _j ~~(\mid i-j
\mid =1), \] and the additional relations $\sigma_i^2=1$. \\ 

There is a natural surjective homomorphism $B_{n+1}\ra S_{n+1}$ of the
braid group  onto the symmetric group  on $n+1$ letters  given by $s_i
\mapsto \sigma  _i$.  The  kernel of this  homomorphism is  the ``Pure
Braid  Group'' $P_{n+1}$  on $n+1$  strands. The  elements  $s_i^2$ of
$B_{n+1}$  lie  in  $P_{n+1}$.   It  can  easily  be  shown  that  the
conjugates  of  these  elements  $s_i^2$  under all  the  elements  of
$B_{n+1}$  generate $P_{n+1}$.   If $i<j$  denote by  $\Pi  _{ij}$ the
product  in  $B_{n+1}$   given  by  $\Pi  _{ij}=  s_{i+1}s_{i+2}\cdots
s_{j-1}$. For  $r<s$, Set  $A_{rs}=\Pi _{rs}^{-1}s_r^2 \Pi  _{rs}$. In
particular,  $A_{r,r+1}=s_r^2$.  The  pure  braid  group  is  in  fact
generated by the elements $A_{rs}$. \\

\subsection{Artin's   Theorem} \label{artinfree} 

Let $F_{n+1}$ be the free group on $n+1$ generators $x_1, \cdots, x_{n+1}$. 
The  braid  group  $B_{n+1}$  acts  (\cite{Bir},  page  21,  Corollary
(1.8.3)) on  the free group  $F_{n+1}$ as follows.
\[s_i(x_j)=x_j~~ {\rm if} ~j\neq i,~i+1,\] 
\[s_i(x_i)=x_ix_{i+1}x_i^{-1}~~ {\rm and} ~~s_i(x_{i+1})=x_i.\] 

The following Theorem of Artin is fundamental to the rest of the section. 
\begin{theorem} \label{artin} The above formulae give an action of the
braid group on $F_{n+1}$; moreover, the action is faithful.
\end{theorem}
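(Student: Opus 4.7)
The plan is to produce a homomorphism $\rho\colon B_{n+1}\to \mathrm{Aut}(F_{n+1})$ out of the Artin formulas, and then to establish its injectivity by a topological argument.

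For the first step, the commutation relations $s_is_j=s_js_i$ with $|i-j|\geq 2$ are immediate: the formula for $s_i$ fixes every $x_k$ with $k\notin\{i,i+1\}$, so the two endomorphisms have disjoint supports and automatically commute. The braid relation $s_is_{i+1}s_i = s_{i+1}s_is_{i+1}$ need only be checked on the three generators $x_i,x_{i+1},x_{i+2}$ (all others are fixed by both sides), and this amounts to a short direct calculation using $s_i(x_i)=x_ix_{i+1}x_i^{-1}$ and $s_i(x_{i+1})=x_i$. The calculation also shows that each $s_i$ sends the free basis $(x_1,\dots,x_{n+1})$ to a free basis of $F_{n+1}$ (one may write down the inverse automorphism explicitly on generators), giving $s_i\in \mathrm{Aut}(F_{n+1})$ and hence a well-defined homomorphism $\rho$.

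The substantive content is the faithfulness of $\rho$. The conceptual proof is geometric: identify $B_{n+1}$ with the mapping class group of a disk $D$ with $n+1$ marked interior points $p_1,\dots,p_{n+1}$, taken modulo isotopies fixing $\partial D$ pointwise. Under this identification, the generator $s_i$ corresponds to the half-twist exchanging $p_i$ and $p_{i+1}$ inside a small supporting disk. Choosing a basepoint on $\partial D$ together with loops $x_j$ encircling the respective $p_j$, one has $\pi_1(D\setminus\{p_1,\dots,p_{n+1}\})\cong F_{n+1}$, and drawing the standard pictures confirms that the half-twist $s_i$ induces exactly the formulas in the statement. Faithfulness then follows from the classical Dehn--Nielsen type statement that the mapping class group of a punctured disk fixing $\partial D$ pointwise injects into $\mathrm{Aut}(F_{n+1})$: a homeomorphism acting trivially on the fundamental group must be isotopic, rel $\partial D$, to the identity.

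The main obstacle is establishing faithfulness; the relation-checking is mechanical, but injectivity requires genuine input. An alternative to the geometric route is Artin's original combinatorial argument, which proceeds by induction on $n$, tracking how a braid acts on the conjugacy classes of the $x_j$ and exploiting the Fadell--Neuwirth fibration $P_{n+1}\to P_n$ with free kernel. A detailed self-contained presentation along either line is given in \cite{Bir}.
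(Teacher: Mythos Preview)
Your sketch is correct, but note that the paper does not actually prove this theorem: it is stated as a classical result of Artin and attributed to \cite{Bir} (Corollary~(1.8.3)), with no argument given in the text. So there is nothing to compare against beyond the citation, and your proposal---which also defers to \cite{Bir} for the details---is entirely in line with the paper's treatment. Your outline of both the geometric (mapping-class-group) and the combinatorial approaches is a reasonable expansion of what the paper leaves implicit.
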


The action of $B_{n+1}$ is  such that on the abelianisation $\Z^{n+1}$
of $F_{n+1}$, the  action is by the symmetric  group $S_{n+1}$ and the
kernel  of  the map  $B_{n+1}\ra  S_{n+1}$  is  the pure  braid  group
$P_{n+1}$. .

The  action  of the  generators  $A_{r,s}$  of  the pure  braid  group
$P_{n+1}$ can  be worked  out (from these  formulae for the  action of
$s_i$) (\cite{Bir}, p. 25, Corollary 1.8.3)):
\[A_{r,s}(x_i)=x_i~~(i<r  ~~{\rm or}  ~~i>s),  \quad A_{r,s}(x_r)=(x_r
x_s)x_r(x_rx_s)^{-1},\]
\[A_{r,s}(x_s)=x_rx_sx_r^{-1},                                    \quad
A_{r,s}(x_i)=[x_r,x_s]x_i[x_r,x_s]^{-1}~~(r<i<s).\]   In   particular,
each  generator $x_i$  of $F_{n+1}$  goes into  a conjugate  of itself
under the action of $P_{n+1}$.

\subsection{Action on Certain Subgroups and Sub-quotients} 
\label{subquotients}

Suppose $F$ is a  group, and $Q$ a quotient of $F$  and $K$ the kernel
of the quotient map $F\ra Q$. Then there is the exact sequence
\[1  \ra K  \ra F\ra  Q \ra  1.\] Denote  by $K^1=  [K,K]$ the
commutator subgroup of $K$ and by $K^{ab}=K/K^1$ the abelianisation
of $K$. Then the conjugation action  of $F$ stabilises  $[K,K]$ and $F$
acts on  $K^{ab}$.  We may  write $K^{ab}$ additively.  The  action of
$F$ on $K^{ab}$  is such that $K$ acts trivially;  hence the action of
$F$ on  $K^{ab}$ descends to  an action of  $Q$ on $K^{ab}$  and hence
$K^{ab}$ becomes a  $\Z[Q]$-module where $\Z[Q]$ is the  group ring of
$Q$ with $\Z$-coefficients. \\

We have an exact sequence 
\[0 \ra K^{ab} \ra F/K^1 \ra Q \ra 1.\] Suppose $H \subset Aut (F)$ be
a subgroup of  the automorphism group of $F$  such that $H$ stabilises
$K$ and  {\it acts trivially} on  $Q$; then $H$ acts  on the foregoing
exact  sequence and the  action of  $H$ on  $K^{ab}$ commutes  with the
action of $Q$ on $K^{ab}$;  therefore, $H$ acts by $\Z[Q]$-module maps
on the $Q$ module $K^{ab}$.

\subsection{The Gassner Representation} \label{gassnersubsection}

In   the  notation   of  subsection   (\ref{subquotients}),   we  take
$F=F_{n+1}*F_{n+1}^{ab}$ to be the free product of the group $F_{n+1}$
and  its  abelianisation  $F_{n+1}^{ab}$  (written  multiplicatively).
Write, temporarily,  $H$ for $F_{n+1}$.  The abelianisation  of $F$ is
$H^{ab}\times   H^{ab}$.   There   is  the   multiplication   map  $m:
H^{ab}\times  H^{ab}$  given  by   $(x,y)\mapsto  xy$.   We  have  the
composite      map     $\phi:     F=H*H^{ab}\ra      H^{ab}     \times
H^{ab}\stackrel{m}{\ra}  H^{ab}$.  This  is a  surjection  with kernel
$K$, say. We then have a {\it split} exact sequence
\[1 \ra K \ra H*H ^{ab} \ra H  ^{ab} \ra 1 .\] The group in the middle
is then  a semi-direct  product $H* H^{ab}\simeq  K \rtimes  H ^{ab}$,
since  the   exact  sequence  splits.   Write  the   elements  of  the
semi-direct  product as  a  pair $(w,t)$  with  $w\in K$  and $t\in  H
^{ab}$.  Write  the image of the  standard generators $x_i$  of $H$ in
this semi-direct  product group as a  pair $x_i=(y_i,X_i)$.  Therefore
$y_i$ and  $X_j$ generate the group  $F$ and hence  the $y_i$ generate
$K$ as a normal subgroup of $F$. \\

As in subsection \ref{subquotients}, we  take the quotient of the group
$F$ by the commutator subgroup $[K,K]$, and get an exact sequence
\[0  \ra  K^{ab}  \ra \frac{H*H ^{ab}}{[K,K]}  \ra  H ^{ab} \ra  1 ,\] 
which is  still split  over $H^{ab}$. Hence  we may
write  the group  in the  middle  as a  semi-direct product  $F/[K,K]=
K^{ab}\rtimes H ^{ab}$. An  element of this group is  written as a
pair  $(w,t)$   with  $w\in  K^{ab}$  and   $t\in  H ^{ab}$;  the
conjugation by  $t$ on $K^{ab}$  is simply multiplying by  the element
$t$,  when  we  view  $K^{ab}$   as  a  module  over  the  group  ring
$\Z[H ^{ab}]$.  We  write $H^{ab}=F_{n+1}^{ab}$ multiplicatively  in the
form  $H^{ab}=X_1^{\Z}X_2^{\Z}\cdots  X_{n+1}^{\Z}$.  Denote  by
$e_i$ the image  of $y_i\in K$ in the  abelianisation $K^{ab}$. By the
conclusion of the last paragraph, the elements $e_i$ generate $K^{ab}$
as a module over the group ring $\Z[H ^{ab}]$:
\[K^{ab}= \sum  _{i=1} ^{n+1} \Z[H ^{ab}](e_i)  . \] We  will now show
that $K^{ab}$  is a free  module over $R$  with $e_i$ as  basis. Write
$R=\Z[H^{ab}]$.  We will view $R$  as a module over the multiplicative
group  $H  ^{ab}$ by  the  formula  $x(f_1,  \cdots, f_{n+1})=  (xf_1,
\cdots, xf_{n+1})$  where $x\in H ^{ab}$  is viewed as a  unit in $R$.
Let $(\xi _i)_{i\leq  n+1}$ be the standard basis  of $R^{n+1}$.  Form
the semi-direct product ${\mathcal H}=R^{n+1}\rtimes H^{ab}$.  We then
get a homomorphism from the  free product $H* H ^{ab}$ into ${\mathcal
H}$ by specifying the  homomorphism on the generators $x_i\mapsto (\xi
_i, X_i)$  and $t\mapsto (0,t)\in  {\mathcal H}=R^{n+1}\rtimes F_{n+1}
^{ab}$ .  Then we get a  homomorphism $H* H^{ab}$ which takes $y_i$ to
the element $\xi _i$. Therefore,  we get a homomorphism of $R$ modules
from $K^{ab}$ into $R^{n+1}$ which  sends $e_i$ into the basis element
$\xi _i$. This shows that the $e_i$ are linearly independent over $R$;
the last line of the preceding  paragraph tells us that the $e_i$ span
$K^{ab}$. Hence the $e_i$ form a basis of $K^{ab}$ and
\[K^{ab}=R^{n+1}=\bigoplus _{i=1}^{n+1} Re_i .\]

We now  write the product  of two elements $x=(w,t),  ~y=(w',t') \in
{\overline F}_{n+1} =K^{ab}\rtimes H^{ab}$. The product is given by
\[xy=(w,t)(w',t')=(w+tw't^{-1},tt')=(w+t(w'),tt').\] 
The inverse of $x=(w,t)$ is $x^{-1}=(-t^{-1}w,t^{-1})$. 
An easy induction shows that 
\[(v_1,t_1)(v_2,t_2)\cdots     (v_{n+1},t_{n+1})=    (\sum
_{i=1}^{n+1}  t_1t_2\cdots  t_{i-1}v_i, t_1t_2\cdots t_{n+1} ) .  \] In  this
formula, $v_i$ are  vectors in $K^{ab}$ and $K^{ab}$ is viewed as an $R$-
module.  The following lemma  is an immediate consequence  of these
formulae and the formulae in \ref{artinfree}.

\begin{lemma}  \label{semidirect}  Let  $x_r,  x_s\in F_{n+1}=H$  be  as
before  and  $(e_r,X_r)={\overline x}_r,  (e_s,X_s)={\overline x}_s
\in {\overline  F}=F/[K,K]$ be their images in  the quotient group
$F/[K,K]$ (which is a semi-direct product).  Then we have the formulae
(read  in  ${\overline  F}$) 
\begin{equation}   \overline   {x_rx_sx_r^{-1}}=   (e_r+X_re_s-X_se_r,
X_s)=((1-X_s)e_r+X_re_s,X_s),\end{equation}
\begin{equation}
x_rx_sx_rx_s^{-1}x_r^{-1}
=(e_r+X_r(e_s)+X_rX_s(e_r)-X_r^2(e_s)-X_re_r,X_r),\end{equation}
\begin{equation}    \label{commutator}    \overline   {[x_r,x_s]}    =
(1-X_s)e_r-(1-X_r)e_s ~~{\rm and}\end{equation}
\begin{equation}\overline{[x_r,x_s]x_i[x_r,x_s]^{-1}}=
(e_i+(1-X_i)v_{r,s},X_i)=\end{equation}
\begin{equation}=(e_i+(1-X_i)(1-X_s)e_r-(1-X_i)(1-X_r)e_s, X_i).\end{equation}
\end{lemma}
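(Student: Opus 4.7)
The proof is purely a matter of computation in the semidirect product $\overline{F} = K^{ab} \rtimes H^{ab}$, using only the multiplication rule $(w,t)(w',t') = (w + t(w'), tt')$, the inversion rule $(w,t)^{-1} = (-t^{-1}w, t^{-1})$, and the crucial fact that $H^{ab}$ is abelian so that $X_r, X_s, X_i$ commute freely. The plan is to carry out the four identities in order, each one feeding into the next.

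First I would compute $\overline{x_r x_s x_r^{-1}}$ directly: starting from $(e_r, X_r)(e_s, X_s) = (e_r + X_r e_s, X_rX_s)$ and then multiplying on the right by $(e_r, X_r)^{-1} = (-X_r^{-1} e_r, X_r^{-1})$ gives $(e_r + X_r e_s - X_r X_s X_r^{-1}\,e_r, X_s)$; since $H^{ab}$ is commutative, $X_r X_s X_r^{-1} = X_s$, which yields formula (1).

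Next, for the five-fold product $x_r x_s x_r x_s^{-1} x_r^{-1}$ (which equals $(x_rx_s)x_r(x_rx_s)^{-1}$ and so represents the Artin formula $A_{r,s}(x_r)$), I would iterate the multiplication rule, accumulating one factor at a time and using commutativity at every step to collapse the $H^{ab}$-component. The telescoping in $H^{ab}$ kills $X_s$ inside $X_rX_sX_r \cdot X_s^{-1} = X_r^2$, and a final multiplication by $(-X_r^{-1}e_r, X_r^{-1})$ brings the $H^{ab}$-component down to $X_r$; tracking the coefficients of $e_r, e_s$ through these four multiplications produces formula (2). For the commutator $\overline{[x_r,x_s]}$ I would observe that the $H^{ab}$-part of $x_rx_sx_r^{-1}x_s^{-1}$ is $X_sX_s^{-1}=1$; applying the multiplication rule to the already-computed $\overline{x_rx_sx_r^{-1}} = ((1-X_s)e_r + X_r e_s, X_s)$ multiplied by $(-X_s^{-1}e_s, X_s^{-1})$ gives $((1-X_s)e_r + X_r e_s - e_s, 1) = ((1-X_s)e_r - (1-X_r)e_s, 1)$, which is formula (\ref{commutator}).

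Finally, once $\overline{[x_r,x_s]} = (v_{r,s}, 1)$ with trivial $H^{ab}$-component, the conjugation formula (4) is a one-line computation: $(v,1)(e_i, X_i)(v,1)^{-1} = (v + e_i, X_i)(-v, 1) = (v + e_i - X_i v, X_i) = (e_i + (1-X_i)v, X_i)$, and substituting $v = v_{r,s}$ from formula (\ref{commutator}) yields the claimed expression. There is no genuine obstacle in this lemma; the only thing demanding attention is careful bookkeeping of signs and the systematic use of commutativity of $H^{ab}$ to combine the exponents $X_r, X_s$ each time a new factor is multiplied in.
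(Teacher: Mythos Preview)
Your proposal is correct and follows exactly the approach the paper intends: the paper itself does not write out a proof but simply declares the lemma ``an immediate consequence of these formulae'' (i.e., the semidirect-product multiplication and inversion rules stated just above it), and your step-by-step computations are precisely those immediate consequences spelled out in full.
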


The braid group  $B_{n+1}$ acts on the free  group $F_{n+1}$ and hence
acts  naturally  on  the  free product  $F=F_{n+1}*F_{n+1}^{ab}$.  The
preceding   map  $F\ra   F^{ab}=H^{ab}\times   H^{ab}\ra  H^{ab}$   is
equivariant for  the action of  $B_{n+1}$ (and $B_{n+1}$ acts  via the
finite  group $S_{n+1}$ on  the abelianisation  $F_{n+1}^{ab}$). Hence
$B_{n+1}$ acts on the exact sequence
\[1 \ra K \ra F \ra  H^{ab}\ra 1,\] and the pure braid group $P_{n+1}$
acts  trivially on  $F_{n+1}^{ab}=H^{ab}$.   We are  therefore in  the
situation  of  subsection   (\ref{subquotients}),  and  hence,  as  in
subsection   (\ref{subquotients}),  the   group   $P_{n+1}$  acts   by
$\Z[F_{n+1}^{ab}]=R$-module maps on $K^{ab}$.   We can now compute the
action of  the standard generators  $A_{r,s}$ of the pure  braid group
$P_{n+1}$ on the images of $x_i$ in the quotient group $F/[K,K]$. \\

[1]  Recall  that  $A_{r,s}(x_i)=x_i$   if  $i\leq  r-1$  or  $i  \geq
s+1$. From Lemma  \ref{semidirect}, it follows that $A_{r,s}(e_i)=e_i$
for these $i$. \\

[2] $A_{r,s}(x_r)=^{x_rx_s}(x_r)$.  When this equation  is read modulo
$[K,K]$, we see from Lemma \ref{semidirect} that
\[A_{r,s}(e_r)X_r                         =A_{r,s}(e_rX_r)=A_{r,s}(x_r)
=e_rX_re_sX_s(e_rX_r)X_s^{-1}e_s^{-1}X_r^{-1}e_r^{-1} .\]
   
Cancelling $X_r$ on the right on the left most and right most sides of
this equation, we see that
\[A_{r,s}(e_r)= (1-X_r+X_rX_s)e_r+X_r(1-X_r)e_s .\]

[3]  The  equation   $A_{r,s}(x_s)=^{x_r}(x_s)$  becomes,  modulo  the
subgroup $[K,K]$, the equation
\[A_{r,s}(e_s)X_s= ((1-X_s)e_r+X_re_s)X_s.\]    

[4] If $r<i<s$ then by Lemma \ref{semidirect},  
\[A_{r,s}(e_i)X_i=A_{r,s}(x_i)=                      ^{[x_r,x_s]}(x_i)=
(e_i+(1-X_i)((1-X_s)e_r-(1-X_r)e_s)) X_i \] or
\[A_{r,s}(e_i)= e_i +(1-X_i)((1-X_s)e_r-(1-X_r)e_s). \]

These  equations  imply  that  with  respect to  the  basis  $e_i$  of
$K^{ab}=R^{n+1}$, the  action by $P_{n+1}$ on the  $R$ module $K^{ab}$
is exactly the {\bf Gassner representation} $G_n(X): P_{n+1}\ra GL_{n+1}(R)$
(see \cite{Bir}, p 119, formulae (3-24)). \\

\begin{notation} The  ring $R=\Z[X_1^{\pm 1}, \cdots, X_{n+1}^{\pm
1}]$  of   Laurent  polynomials  in  $n+1$   variables  with  integral
coefficients,  is an integral  domain. Let  $\Omega =\Q  (X_1, \cdots,
X_{n+1})$  be its  field of  fractions.  We  have the  free $R$-module
$K^{ab}=\sum _{i=1}^{n+1}  R e_i$.  This may be  thought of as  an $R$
submodule of the $\Omega  $ vector space $K^{ab}\otimes _R \Omega=\sum
_{i=1}^{n+1}\Omega e_i$. We may write $e_i=(1-X_i)v_i$ for some vector
$v_i\in K^{ab}\otimes \Omega$. \\

Now  the full  braid  group $B_{n+1}$  acts  on $R$  via $S_{n+1}$  by
permuting the indices $X_i$ of the generators (and the pure braid group
acts  trivially).   Hence  $B_{n+1}$  acts  on  $\Omega   $  by  field
automorphisms. We denote this  action, for $g\in B_{n+1}$ and $\lambda
\in  \Omega  $, by  $(g,\lambda  )\mapsto  g(\lambda)$.  The action  of
$B_{n+1}$ on $K^{ab}\otimes  \Omega$ is not linear over  $\Omega $ but
is ``twisted  linear'': if $\lambda  \in \Omega $, $g\in  B_{n+1}$ and
$w\in K^{ab}$, then $g(\lambda w)=g(\lambda ) g(w)$.

\end{notation}

\begin{lemma} \label{vbasis} Let $v_i=\frac{1}{1-X_i}e_i$ with $e_i$
and $R$ as before. The $R$ module $\bigoplus _{i=1}^{n+1} Rv_i$ is stable
under the action of $P_{n+1}$.  
\end{lemma}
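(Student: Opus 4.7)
The plan is to reduce everything to a direct computation using the explicit Gassner formulas for $A_{r,s}(e_i)$ displayed in cases [1]--[4] immediately preceding the statement of the lemma. Since the $A_{r,s}$ generate $P_{n+1}$, it suffices to verify that $A_{r,s}(v_i) \in \bigoplus_{j} Rv_j$ for every $i$. Because $P_{n+1}$ acts trivially on $F_{n+1}^{ab}=H^{ab}$, each generator $X_j$ of $R$ is fixed by $A_{r,s}$, and the twisted linearity rule $g(\lambda w)=g(\lambda)g(w)$ then gives
\[A_{r,s}(v_i)=A_{r,s}\!\left(\tfrac{1}{1-X_i}e_i\right)=\tfrac{1}{1-X_i}A_{r,s}(e_i).\]

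Next I would substitute the four cases and rewrite $e_r,e_s,e_i$ on the right as $(1-X_r)v_r$, $(1-X_s)v_s$, $(1-X_i)v_i$. The key mechanism is that in every case a factor of $(1-X_i)$ appears in $A_{r,s}(e_i)-e_i$ exactly where needed to cancel the offending denominator. For instance, in case $i=r$ one obtains
\[A_{r,s}(v_r)=(1-X_r+X_rX_s)v_r+X_r(1-X_s)v_s,\]
in case $i=s$ one obtains
\[A_{r,s}(v_s)=(1-X_r)v_r+X_rv_s,\]
and in the ``middle'' case $r<i<s$ the formula reads
\[A_{r,s}(v_i)=v_i+(1-X_r)(1-X_s)(v_r-v_s);\]
the remaining case $i<r$ or $i>s$ is trivial since $A_{r,s}$ fixes $e_i$ and hence $v_i$. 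In each case the coefficients manifestly lie in $R$.

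There is really no obstacle here; the content of the lemma is exactly that the Gassner matrices, when written in the basis $\{v_i\}$, have entries in $R$ rather than merely in $\Omega$. The only thing to double-check is the rescaling when $i\in\{r,s\}$, where both the source and target basis vectors get rescaled by different factors $1/(1-X_r)$ and $1/(1-X_s)$; a quick bookkeeping confirms that the factors cancel precisely because the Gassner entries off the diagonal of the $\{r,s\}$--block already contain the matching $(1-X_{\bullet})$ terms. This verifies stability of $\bigoplus_i Rv_i$ under each $A_{r,s}$, and hence under all of $P_{n+1}$.
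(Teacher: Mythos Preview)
Your proof is correct. The computations you display all check out: in particular the cancellation of the denominator $1/(1-X_i)$ in each of the four cases works exactly as you claim, because the Gassner formulas for $A_{r,s}(e_i)$ always produce either $e_i$ itself or terms carrying an explicit factor $(1-X_i)$ (in case $r<i<s$) or terms whose rescaling happens to land in $R$ (in cases $i=r,s$).

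However, your route differs from the paper's. You work directly with the pure braid generators $A_{r,s}$, which has the advantage that $P_{n+1}$ acts trivially on $H^{ab}$ and hence genuinely $\Omega$-linearly, so the passage from $e_i$ to $v_i$ is an honest scalar division. The paper instead proves a stronger statement: it checks stability under each $s_i$ in the \emph{full} braid group $B_{n+1}$, where the action is only twisted-linear (since $s_i$ permutes the $X_j$). This requires a short argument that twisted linearity still preserves $\sum Rv_j$ once each $v_j$ is shown to map to an $R$-combination of the $v_k$, but the payoff is that there are only $n$ generators $s_i$ to check rather than all the $A_{r,s}$, and the resulting formulas $s_i(v_i)=(1-X_i)v_i+X_iv_{i+1}$, $s_i(v_{i+1})=v_i$ are simpler. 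More importantly, those $s_i$-formulas are exactly what the paper reuses in the very next lemma to compute the action on the $\e_j$ basis and to obtain the matrix of $s_i^2$; your approach would require redoing that computation from scratch.
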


\begin{proof}  Since  $P_{n+1}$  acts  by $\Omega  $-linear  maps,  it
suffices to  show that  for every $g\in  P_{n+1}$ and every  $v_i$ the
translate $g(v_i)$ is an $R$  linear combination of the $v_j$. We will
in fact prove more; we will show that for every generator $s_i$ of the
{\it full braid  group} $B_{n+1}$, the translate $s_i(v_j)$  is an $R$
linear combination of  the $v_k$. The group $B_{n+1}$  acts by twisted
$\Omega $  linear maps  as before  and not by  $\Omega $  linear maps;
however, it takes an element  $\lambda w\in K^{ab}$, with $\lambda \in
R$ and $w\in K^{ab}$ into an  element of the form $\mu g(w)$ and hence
preserves the space $\sum R v_i$ provided each $v_j$ is mapped into an
$R$ linear combination  of the $v_k$. We now need  only check that for
each  generator  $s_i$ of  $B_{n+1}$  and  each  $v_j$, the  translate
$s_i(v_j)$ is an  $R$ linear combination of the  vectors $v_1, \cdots,
v_{n+1}$. \\

Suppose   $j\neq   i,i+1$.    We   have  $s_i(x_j)=x_j$   for   $j\neq
i,i+1$. Therefore,  $s_i(X_j)=X_j$. Since $x_i=(e_i,  X_i)$ it follows
that $s_i(e_j)=e_j$. We now write $e_j=(1-X_j)v_j$ and note that $s_i$
acts trivially on $X_j$. Hence
\[(1-X_j)v_j=e_j=                              s_i(e_j)=s_i(1-X_j)v_j)=\]
\[=(1-s_i(X_j))s_i((v_j)=(1-X_j)s_i(v_j).\]      This      shows     that
$s_i(v_j)=v_j$. \\

Suppose $j=i$.  Then $s_i(x_i)=x_ix_{i+1}x_i ^{-1}=[x_i,x_{i+1}]x_{i+1}
$. We  have expressed a commutator in  terms of the $e_j$  (see (4) of
Lemma   \ref{semidirect}):   hence   the   commutator   $[x_i,x_{i+1}]=
(1-X_{i+1})e_i-(1-X_i)e_{i+1}$. Therefore,
\[(s_i(e_i),            X_{i+1})=           s_i((e_i,           X_i))=
((1-X_{i+1})e_i-(1-X_i)e_{i+1}+e_{i+1},X_{i+1}).    \]   Comparing  the
extreme  left and  right  hand sides  of  this equation,  we see  that
$s_i(e_i)= (1-X_{i+1})e_i+X_ie_{i+1}$.  Now write $e_i=(1-X_i)v_i$ and
similarly for $e_{i+1}$. Then we have
\[(1-X_{i+1})s_i(v_j)=s_i((1-X_i)v_i)=s_i(e_i)= (1-X_{i+1})e_i+X_ie_{i+1}=\]
\[=(1-X_{i+1})(1-X_i)v_i+X_i(1-X_{i+1})v_{i+1}.\]            Cancelling
$(1-X_{i+1})$ on  both the extreme right  and left hand  sides of this
equation, we get
\[s_i(v_i)= (1-X_i)v_i+X_iv_{i+1}.\] 

Suppose  $j=i+1$. Then  $s_i(x_{i+1})=x_i$. Reading  this as  in Lemma
\ref{semidirect}   we   get   $s_i(e_{i+1},X_{i+1})=(e_i,  X_i)$   and
$s_i(X_{i+1})=X_i$.  Therefore,  we  get  $s_i(e_{i+1})=e_i$.  Writing
$e_i(1-X_i)v_i$ we see that
\[(1-X_i)s_i(v_{i+1})= s_i((1-X_{i+1})v_{i+1})=s_i(e_{i+1})= e_i=(1-X_i)v_i.\]
Comparing the extreme right and left hand sides of this equation, we get
\[s_i(v_{i+1})= v_i.\]  From the last  three paragraphs, we  see that
each $s_i(v_j)$ is an  $R$-linear combination of the $v_j$. Therefore,
the lemma follows.
\end{proof}

\begin{lemma} \label{epsilonbasis} Set $\e_i=v_i-v_{i+1}$ for $1\leq i
\leq n$.Then the $R$ module generated by $\{\e_i: 1\leq i \leq n\}$ is
stable under the action of  $P_{n+1}$.  \\ 

In  particular, with  respect to  the basis  $\e_1, \cdots,  \e_n$ the
transformation $T_i=s_i^2$ has the matrix form
\[\begin{pmatrix} 1 & 0 & 0 \\ X_i (1-X_{i+1}) & X_iX_{i+1} & 1-X_i \\ 0
& 0 & 1 \end{pmatrix} \oplus 1_{n-3} , \] where the $3\times 3$ matrix
is with respect  to the basis elements $\e_{i-1},  \e_i, \e_{i+1}$ and
$s_i^2$ acts as identity on the basis elements $\e_j$ for the
other indices $j$. In particular, $s_i^2$ are complex reflections. 
\end{lemma}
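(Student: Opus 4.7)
My plan is to verify both assertions by direct computation from Lemma \ref{vbasis}, keeping careful track of the fact that each $s_i$ acts on $R$ by the transposition $X_i\leftrightarrow X_{i+1}$ (fixing all other $X_k$), and acts on $K^{ab}\otimes\Omega$ only by twisted-linear maps.

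The first step is to compute $s_i(\e_j)$ for every $j$. For $j\notin\{i-1,i,i+1\}$ both $v_j$ and $v_{j+1}$ are $s_i$-fixed, so $s_i(\e_j)=\e_j$. For the three remaining indices, substituting the formulas of Lemma \ref{vbasis} into $\e_j=v_j-v_{j+1}$ and regrouping in the basis $\{\e_k\}$ yields
\begin{equation*}
s_i(\e_{i-1})=\e_{i-1}+X_i\e_i,\qquad s_i(\e_i)=-X_i\e_i,\qquad s_i(\e_{i+1})=\e_i+\e_{i+1}.
\end{equation*}
These formulas show that each $s_i$ preserves $M=\bigoplus_{j=1}^{n}R\e_j$ in the twisted-linear sense, and hence that the full braid group $B_{n+1}$ preserves $M$ as a set. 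Since $P_{n+1}$ acts on $R$ trivially (being the kernel of $B_{n+1}\twoheadrightarrow S_{n+1}$) and is generated by $B_{n+1}$-conjugates of the $s_i^2$, the submodule $M$ is automatically an $R$-stable submodule under $P_{n+1}$, which gives the first assertion.

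For the matrix of $T_i=s_i^2$ I would apply $s_i$ once more to the three formulas above, remembering to twist the scalar coefficients; for instance
\begin{equation*}
s_i^2(\e_{i-1})=s_i(\e_{i-1})+s_i(X_i)\,s_i(\e_i)=(\e_{i-1}+X_i\e_i)+X_{i+1}(-X_i\e_i)=\e_{i-1}+X_i(1-X_{i+1})\e_i,
\end{equation*}
and the entirely analogous one-line computations give $s_i^2(\e_i)=X_iX_{i+1}\e_i$ and $s_i^2(\e_{i+1})=(1-X_i)\e_i+\e_{i+1}$, while $s_i^2(\e_j)=\e_j$ for $j$ outside the triple $\{i-1,i,i+1\}$. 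This is precisely the claimed block matrix. Finally, reading off the three columns one sees that $(T_i-1)$ sends each of $\e_{i-1},\e_i,\e_{i+1}$ into $R\e_i$, so the image of $T_i-1$ is one-dimensional and $T_i$ is a generalised (complex) reflection. There is no real obstacle beyond carefully bookkeeping the twist of coefficients when iterating $s_i$; everything reduces to a finite algebraic verification.
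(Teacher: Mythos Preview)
Your proof is correct and follows essentially the same approach as the paper: both compute $s_i(\e_j)$ directly from the formulas of Lemma \ref{vbasis}, obtaining $s_i(\e_{i-1})=\e_{i-1}+X_i\e_i$, $s_i(\e_i)=-X_i\e_i$, $s_i(\e_{i+1})=\e_i+\e_{i+1}$, and then iterate with the coefficient twist to get the matrix of $s_i^2$. Your added remark that $(T_i-1)$ has image $R\e_i$, hence $T_i$ is a complex reflection, makes explicit something the paper leaves to the reader.
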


\begin{proof} As in the proof  of Lemma \ref{vbasis}, because the full
braid group acts by ``twisted'' $\Omega$ linear maps on $K^{ab}\otimes
\Omega$, it suffices to check  that the full braid group preserves the
$R$ module spanned by the $\e_i$. \\

We  use the  formulae in  the  proof of  Lemma \ref{vbasis}.   Suppose
$j+1<i$. Then
\[s_i(\e_j)=s_i(v_j-v_{j+1})=v_j-v_{j+1}=\e_j.\]             Similarly,
$s_i(\e_j)=\e_j$  if  $j>i+1$.  We  now  get  from  the  formulae  for
$s_i(v_i)$ and $s_i(v_{i+1})$ obtained from Lemma \ref{vbasis}, that
\[s_i(\e_i)= s_i(v_i-v_{i+1})= (1-X_i)v_i+X_iv_{i+1}- v_i=-X_i\e_i,\]
\[s_i(\e_{i+1})=s_i(v_{i+1}-v_{i+2})= v_i-v_{i+2}= \e_i+\e_{i+1}.\]
Finally, 
\[s_i(\e_{i-1})=s_i(v_{i-1}-v_i)= v_{i-1}- (1-X_i)v_i-X_iv_{i+1}=\]
\[=\e_{i-1}+X_i\e_i.\]

This proves the first part of the lemma. \\

To prove the second part, we must compute $s_i^2(\e_j)$. If $j+1<i$ or
if  $j>i+1$ then  $s_i(\e_j)=\e_j$; hence  $s_i^2(\e_j)=\e_j$.  We now
compute $s_i^2(\e_{i-1})$: 
\[s_i(s_i(\e_{i-1}))=s_i(\e_{i-1}+X_i \e_i)= s_i(\e_{i-1})+ X_{i+1}s_i(\e_i)=\]
\[=\e_{i-1}+X_i\e_i+   X_{i+1}(-X_i\e_i)=\e_{i-1}+X_i(1-X_{i+1})\e_i.\]
Next        we       compute        
\[s_i^2(\e_i)=       s_i(-X_i\e_i)=
-X_{i+1}s_i(\e_i)=X_{i+1}X_i\e_i.\]  Finally,
\[s_i^2(\e_{i+1})=s_i(\e_i+\e_{i+1})= -X_i\e_i+\e_i+\e_{i+1}=(1-X_i)\e_i+\e_{i+1}.\]
\end{proof}

\subsection{An invariant element in the Gassner representation} 

The  product element  $x_1x_2\cdots x_{n+1}\in  F_{n+1}$  is invariant
under  the  action  of  the  braid  group  $B_{n+1}$.   The  image  of
$x_1x_2\cdots x_{n+1}$  in the semi-direct product  group $F/[K,K]$ is
therefore  invariant;   since  the  image  of  $x_i$   is  written  as
$(e_i,X_i)$,   it    follows   from   the    formulae   before   Lemma
\ref{semidirect} that
\[x_1x_2\cdots x_{n+1}= (e_1X_1)(e_2X_2)\cdots(e_{n+1}X_{n+1})= \]
\[=(e_1+X_1(e_2)+X_1X_2(e_3)+\cdots+                        X_1X_2\cdots
X_n(e_{n+1}), X_1X_2\cdots X_{n+1}). \] Therefore, the element
\begin{equation}    \label{invariantelement}    v=\sum    _{i=1}^{n+1}
X_1X_2\cdots X_{i-1}(e_i) \in K^{ab} \end{equation} is invariant under
the  action  of  the  pure  braid  group.   Moreover,  since  all  the
coefficients $X_1X_2\cdots X_i$ of $v$ are units in the ring
\[R=\Z[X_1^{\pm 1},X_2^{\pm 1},  \cdots X_{n+1}^{\pm 1}],\] it follows
that  $v$ is  part of  a  basis of  $K^{ab}=\oplus _{i=1}^{n+1}  Re_i=
R^{n+1}$.   Consider the  quotient module  $V_n(X)=  K^{ab}/Rv$.  Then
$V_n(X)\simeq R^n$ and is a module over $P_{n+1}$. Let $\Omega$ be the
field of fractions  of the integral domain $R$.  Then $V_n(X)\otimes _R
\Omega$  is  called  the  {\bf reduced  Gassner  Representation}  over
$\Omega$ the field of fractions, and is denoted
\[g_n(X): P_{n+1}\ra GL_n(\Omega ).\]

\subsection{A supplement to the space of invariants} \label{supplement}

We will now find a sub-module $W_n(X)$ which has zero intersection with
the  space  $Rv$ of  multiples  of the  invariant  vector  $v$ in  the
(non-reduced) Gassner representation, which is stable under the action
of the  pure braid group $P_{n+1}$ and is  free of rank $n$  over $R$. We
will  view $K^{ab}=\oplus  _{i=1}^ {n+1}  Re_i$ as  a subgroup  of the
$\Omega $ vector space $\Omega ^{n+1}=K^{ab}\otimes _R \Omega = \oplus
_{i=1}^{n+1}  \Omega e_i$. Write  $e_i=(1-X_i)v_i$, with  $v\in \Omega
^{n+1}$. \\ 

The calculations  of  Lemma \ref{semidirect}  show that  the
$R$-module $L=\oplus _{i=1}^{n+1} Rv_i$  is stable under the action of
the pure braid  group $P_{n+1}$. The lemma also  implies that the free
$R$ sub-module
\[W=\oplus _{i=1}^n R  \e_i\simeq R^n,\] (where $\e_i=v_i-v_{i+1}$) is
stable under  $P_{n+1}$. We note that  the commutator $[x_i,x_{i+1}]$,
(see equation \ref{commutator} of Lemma \ref{semidirect}) viewed as an
element of the kernel $K^{ab}$ has the form
\begin{equation} 
[x_i,x_{i+1}]= (1-X_i)e_{i+1}-(1-X_{i+1})e_i= (1-X_i)(1-X_{i+1})(v_i-v_{i+1})= 
\end{equation}
i.e. 
\begin{equation} \label{epsilon}
[x_i,x_{i+1}]= (1-X_i)(1-X_{i+1})\e_i. 
\end{equation} As an  $R$ module, $W$ is a  summand of $L$: $L=W\oplus
Rv_{n+1}$ (since  $\e_i=v_i-v_{i+1}$ form a  basis of $W$). It  can be
proved  that $W$  is  {\it not}  a  direct summand  as  a module  over
$P_{n+1}$; however, it  is so, when all the  modules are tensored with
the field $\Omega$:
\[L\otimes \Omega  =\Omega ^{n+1}= W\otimes \Omega  \oplus \Omega v,\]
where   $v=\sum  _{i=1}^{n+1}   X_1\cdots  X_{i-1}e_i$.    Write  $\pi
=X_1\cdots X_i$  and $e_i=(1-X_i)v_i$  as before.  Then  the invariant
element  $v$ of equation  \ref{invariantelement} can  be written  as a
linear combination of $\e_i$ and $e_{n+1}$:
\begin{equation}     \label{invariant}     v=(1-\pi     _1)\e_1+(1-\pi
_2)\e_2+\cdots    (1-\pi    _n)\e_n+    (1-\pi   _{n+1})v_{n+1}    \in
L.\end{equation}  We  will refer  to  the  $P_{n+1}$ module  $W=\oplus
R\e_i\simeq R^n$ as the  {\bf reduced Gassner representation} over the
ring $R$. We will later  consider the reduction (modulo ideals of $R$)
of   $W$   to   obtain   specialisations  of   the   reduced   Gassner
representation.  Over the fraction field $\Omega $, the representation
$W\otimes \Omega $ is isomorphic to the quotient $K^{ab}\otimes \Omega
/\Omega v$ and hence the terminology is consistent with the end of the
preceding subsection.

\section{Properties of the The Gassner Representation}
\label{gassnersection}

In   this  section,   we  prove   some  properties   of   the  Gassner
representation and  its specialisations. We first  {\it construct} (in
subsection  \ref{gassnerhermitian})  a  skew  Hermitian  form  on  the
reduced Gassner representation, which is invariant under the action of
the pure braid group. The existence  of the form is due to \cite{Long}
(see also \cite{Del-Mos}) , but  the construction for the basis $\e_i$
defined in the previous section, may perhaps be new.  Using this form,
it   is  easy   to  decide   when  specialisations   of   the  Gassner
representation (especially $d$-th roots of unity) are irreducible. \\

It turns  out that the  specialised reduced Gassner  representation is
irreducible  if and  only if  the form  is nondegenerate.  When  it is
degenerate, we will  get, in the image of  the Gassner representation,
many             unipotent            elements            (Proposition
\ref{gassnerspecialisation}).  This   is  crucial  to   our  proof  of
arithmeticity  (Theorem \ref{purebraidmainth}).

\subsection{A  (skew)  hermitian  form  preserved by  the  pure  braid
group} \label{gassnerhermitian}

It  is  known  (see   \cite{Long}  Theorem  (3.3))  that  the  Gassner
representation has a skew-hermitian form invariant under the action of
the  pure braid  group.   To compute  this  form with  respect to  the
``$\e$''-basis  $\{\e_i: 1\leq  i  \leq n\}$  of  the reduced  Gassner
representation, we proceed as follows.  Since the matrices $s_i^2$ act
by  complex  reflections, and  $\e_i$  is  the  unique (up  to  scalar
multiples) eigenvector with eigenvalue $X_iX_{i+1} \neq 1$, it follows
from Lemma  \ref{complexreflections} that  the group generated  by the
$s_i^2$  acts  already  irreducibly  on $R^n\otimes  \Omega  $,  where
$\Omega $  is the field of  fractions of $R$.   Further, since $s_i^2$
are  unitary, the  eigenvectors with  eigenvalue $=1$  of  $s_i^2$ are
orthogonal with respect to $h$, to the eigenvector $\e _i$.  Hence $\e
_j$ and $\e _i$ are orthogonal if $\mid i-j\mid \geq 2$. \\

The  ring $R$ has  an involution  given by  $X_i\mapsto X_i^{-1}=Y_i$.
Since our form  is to be skew hermitian, we  have that $h(\e_1, \e_1)$
is  an element  of $R$  which  is ``imaginary''  (i.e. it  goes to  its
negative under the involution). We normalise it so that
\[h(\e_1,    \e_1)=\frac{1-X_1X_2}{(1-X_1)(1-X_2)}.\]   Consider   the
element  $h(\e_1,\e_2)$;  the  invariance  of  $h$  under  forces  the
equation
\[h(\e_1,\e_2)=h(s_1^2(\e_1), s_1^2(\e_2)).\]
Since $s_1^2(\e_1)=X_1X_2\e_1$, and $s_1^2(\e_2)= (1-X_1)\e_1+\e_2$, the 
invariance of $h$ and the chosen value of $h(\e_1, \e _1)$ imply 
\[h(\e_1,\e_2)=     \frac{-X_2}{1-X_2}    ,     \quad    h(\e_2,\e_2)=
\frac{1-X_2X_3}{(1-X_2)(1-X_3)}.   \]  We  can   proceed  in   a  like
manner.  Thus  the  $n\times  n$-matrix $h(X)=(h_{ij})$  of  the  skew
hermitian form is
\[h(X)=      \begin{pmatrix}     \frac{1-X_1X_2}{(1-X_1)(1-X_2)}     &
-\frac{1}{1-X_2}  &  0  &  0  &  \cdots &  0  \cr  -\frac{X_2}{1-X_2}  &
\frac{1-X_2X_3}{(1-X_2)(1-X_3)} & -\frac{1}{1-X_3} &  0 &\cdots & 0 \cr
0    &   -\frac{X_3}{1-X_3}    &    \frac{1-X_3X_4}{(1-X_3)(1-X_4)}   &
-\frac{1}{1-X_4} & 0  & \cdots \cr \cdots & \cdots &  \cdots & \cdots &
\cdots \cr
\end{pmatrix} .\] Thus  the invariance of $h$ implies  that $h$ is the
above  matrix (once  the value  of  $h(\e_1, \e_1)$  is normalised  as
above). \\

The natural  formula for $h(X)$ given  above takes values  in the ring
$R'$  which is  generated  by $R$  together  with the  inverse of  the
element $\prod _{i=1}^{n+1} (1-X_i)$.   One can clear denominators and
ensure   that  $h(X)$   takes   values  in   $R$.  Therefore   $g_n(X)
(P_{n+1})\subset  U(h(X))(S)$,  where  $S$  is  the  sub-ring  of  $R$
invariant under  the involution $f\mapsto {\overline f}$  on $R$ given
by  $X_i\mapsto X_i^{-1}$.   The  unitary group  $U(h)$  is an  affine
algebraic group scheme defined over $S$:
\[U(h)(S)=\{g\in GL_n(R): h(gv,gw)=h(v,w)~~\forall v,w\in R^n\}.\]

The following result is due to \cite{Abd2} (the skew hermitian form 
in \cite{Abd2} is for a different basis) :

\begin{lemma}  \label{abdul}  {\rm [1]}  The  skew  hermitian form  on
$(R^n)$ defined by the matrix  $h(X)$ is invariant under the action of
the  pure braid group.  Therefore, $\rho  _X(P_{n+1})\subset U(h)(S)$,
where  $S\subset R$  is the  sub-ring of  elements invariant  under the
involution. \\

{\rm [2]} The matrix $h(X)$ has determinant
\[        \frac{1-X_1X_2\cdots        X_nX_{n+1}}{(1-X_1)(1-X_2)\cdots
(1-X_n)(1-X_{n+1})} .\]

{\rm   [3]}  In   particular,   the  skew   Hermitian   form  $h$   is
non-degenerate.    
\end{lemma}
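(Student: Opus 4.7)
My plan is to verify part [1] by checking invariance on generators of the braid group, and then to read off parts [2] and [3] from the tridiagonal structure.

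For part [1], rather than working directly with the generators $A_{r,s}$ of $P_{n+1}$, I would verify the twisted invariance $h(s_iv, s_iw) = s_i(h(v,w))$ for each generator $s_i$ of the full braid group $B_{n+1}$, where $s_i$ acts on $R$ by the transposition $X_i \leftrightarrow X_{i+1}$. Since $P_{n+1}$ is the kernel of $B_{n+1}\ra S_{n+1}$ and so acts trivially on $R$, the twisted invariance specialises to the required ordinary invariance under $P_{n+1}$. By the explicit formulas of Lemma~\ref{epsilonbasis}, the generator $s_i$ acts non-trivially only on $\e_{i-1}, \e_i, \e_{i+1}$, and combined with the tridiagonal shape of $h(X)$, the verification reduces, for each $i$, to a small finite list of rational identities in $R$ involving only the entries $h(\e_j, \e_k)$ for indices $j,k$ near $i$.

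For part [2], the tridiagonal shape of $h(X)$ gives the two-term recursion $D_n = h_{n,n} D_{n-1} - h_{n,n-1} h_{n-1,n} D_{n-2}$ for the determinant $D_n$ of the upper-left $n\times n$ block. The base case $D_1 = \frac{1-X_1X_2}{(1-X_1)(1-X_2)}$ is immediate, and substituting the entries of $h(X)$ and simplifying yields by induction
\[D_n = \frac{1-X_1X_2\cdots X_{n+1}}{\prod_{i=1}^{n+1}(1-X_i)}.\]
Part [3] then follows at once: this determinant is a nonzero element of the fraction field $\Omega$, so $h$ is non-degenerate.

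The main obstacle is part [1], which is more bookkeeping than real difficulty. One must track the twisted linearity of the $B_{n+1}$-action together with the sesquilinearity convention for $h$, and clear denominators carefully so that $h$ lands in the invariant subring $S \subset R$. By the translation symmetry of the construction in $i$, once the computation is carried out for a generic $s_i$ acting on the block $\{\e_{i-1},\e_i,\e_{i+1}\}$, the remaining cases follow along identical lines.
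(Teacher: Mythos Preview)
Your approach to parts [2] and [3] is essentially identical to the paper's: expand the tridiagonal determinant to get a two-term recursion and induct. The paper expands along the first row, you along the last, but this is the same argument.

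For part [1] your route differs from the paper's. The paper does not verify invariance by direct computation; instead it observes that the matrix $h(X)$ was \emph{constructed} precisely by imposing invariance under the generators $s_i^2$ (together with a normalisation of $h(\e_1,\e_1)$), and it imports the \emph{existence} of an invariant skew-hermitian form from Long's theorem. Thus in the paper, [1] is immediate from the construction plus the citation. Your proposal instead checks the twisted relation $h(s_i v, s_i w) = s_i\bigl(h(v,w)\bigr)$ for each $s_i\in B_{n+1}$ and specialises to $P_{n+1}$. This is a legitimate and more self-contained argument: it removes the dependence on \cite{Long} at the cost of a finite local verification in the $3\times 3$ block around each index $i$. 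Either way the content is the same; your version is longer but independent of the external reference.
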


\begin{proof} The  form $h$ was constructed under  the assumption that
it was invariant under $P_{n+1}$. Hence we need only prove part [2] of
the lemma.  Part [2] of the lemma is proved by induction. Put $X=(X_1,
X')$  where $X'$  is  the $n$-tuple  $(X_2,  \cdots, X_{n+1})$;  write
$X'=(X_2,  X'')$ where $X''$  is the  $n-1$-tuple $(X_3,  X_4, \cdots,
X_{n+1})$.  Expand   the  determinant   of  the  $n\times   n$  matrix
$h_n(X)=h(X)$  by the  first row;  then $h_n(X)$  may be  expressed in
terms of $h_{n-1}(X')$ and $h_{n-2}(X'')$:
\[h_n(X)         =         \frac{1-X_1X_2}{(1-X_1)(1-X_2)}h_{n-1}(X')-
\frac{X_2}{(1-X_2)^2}h_{n-2}(X'').\] Now induction  on $n$ implies the
formula for the determinant of $h_n(X)$.
\end{proof}

Suppose that  ${\mathfrak a}\subset R$  is a non-zero  ideal invariant
under  the  involution $f\mapsto  {\overline  f}$  on  $R$.  Then  the
quotient map $R\ra R/{\mathfrak a}$ induces a homomorphism $GL_n(R)\ra
GL_n(R/{\mathfrak a})$; the skew hermitian form $h$ descends to a skew
hermitian form $h_{\mathfrak a}$ on the quotient module $(R/{\mathfrak
a})^n$ and hence we have a homomorphism $U(h)(S)\ra U(h_{\mathfrak a})
(S/{\mathfrak a}\cap  S)$; therefore  we have a  representation $g_{n,
{\mathfrak   a}}:P_{n+1}\ra  U(h_{\mathfrak   a})(S/{\mathfrak  a}\cap
S)$.\\

Now consider  a homomorphism from  the ring $R=\Z[X_i^{\pm  1};1\leq i
\leq n+1]$ into the ring $\Z[\omega _d]$ of integers in the cyclotomic
extension $E_d=  \Q(e^{\frac{2\pi i}{d}})=\Q(\omega _d)$  ($\omega _d$
is the  primitive $d$-th root of unity  $e^{\frac{2\pi i}{d}}$).  This
homomorphism is given by $X_i\mapsto t_i$ where $t_i=\omega _d ^{k_i}$
is a $d$-th  root of unity.  Since all the $k_i$  are co-prime to $d$,
it follows  that the group  generated by each  of the $t_i$ is  all of
$\Z/d\Z$, the group  of $d$-th root of unity.   Under the homomorphism
$R\ra \Z[\omega  _d]$, the  sub-ring $S$ maps  into the ring  $O_d$ of
integers in the totally real sub-field $\Q(2{\rm cos}~\frac{2\pi}{d})$
of  the   cyclotomic  field  $E_d$.   Hence  we   have  the  composite
representation, denoted
\[g_n(k,d):  P_{n+1}\ra  U(h)(S)\ra  U(h)(O_d),\]  where  $k$  is  the
$(n+1)$-tuple of integers ($k_1,k_2, \cdots, k_{n+1}$).  \\

The  following is the  main result  of the  paper, from  which Theorem
\ref{cyclicmonodromy} will be deduced.

\begin{theorem} \label{purebraidmainth} Suppose $d \geq 3$, $n\geq 2d$
and  all the  integers  $k_i$ are  co-prime  to $d$.   Then the  image
$\Gamma _n= \Gamma =g_n(k,d) (P_{n+1})$ (of the Gassner representation
$g_n(k,d)$ at primitive $d$-th roots of unity) is a subgroup of finite
index in the integral unitary  group $U(h)(O_d)$.  In other words, the
``monodromy group'' $g_n(k,d)(P_{n+1})$ is an arithmetic group.
\end{theorem}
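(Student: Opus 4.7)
The plan is to apply Theorem \ref{bamise} to the almost simple $K$-group $SU(h)$, where $K=\Q(\omega_d+\omega_d^{-1})$ with ring of integers $O_d$. Because $U(h)(O_d)$ and $SU(h)(O_d)$ are commensurable, it is enough to exhibit, inside $\Gamma$, finite-index subgroups of the integer points of the unipotent radicals of two opposite parabolic $K$-subgroups of $SU(h)$. These will be produced by restricting $g_n(k,d)$ to a sub-pure-braid group whose image preserves a subrepresentation on which the specialised form $h$ degenerates; the collapse of the form creates a large unipotent radical in which the image then provides many elements.

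The main technical input is a pigeon-hole argument. Set $t_i=\omega_d^{k_i}$ and consider the partial sums $s_j=k_1+\cdots+k_j \in \Z/d\Z$. Since $n+2\geq 2d+2$, among $s_0,s_1,\dots,s_{n+1}$ some residue class is attained at least three times, say at $j_0<j_1<j_2$. Putting $I_\alpha=\{j_{\alpha-1}+1,\dots,j_\alpha\}$ ($\alpha=1,2$) gives two disjoint intervals with $\prod_{i\in I_\alpha}t_i=1$. For each interval $I=[a,b]$, the sub-pure-braid group $P_I\subset P_{n+1}$ on strands $a,\dots,b$ preserves a natural subspace $V_I$ of the reduced Gassner module, on which $P_I$ acts by the reduced Gassner representation on $|I|$ strands with parameters $t_a,\dots,t_b$. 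By Lemma \ref{abdul}(2), the determinant of $h|_{V_I}$ is proportional to $1-\prod_{i\in I} t_i$, so $h|_{V_I}$ is degenerate, and its one-dimensional radical is spanned by the $P_I$-invariant vector $v_I$ of equation \ref{invariant}.

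Next comes the unipotent generation. Because $v_I$ lies in the radical of $h|_{V_I}$, transvections $w\mapsto w+\ell(w)v_I$ all preserve $h$, and the image $g_n(k,d)(P_I)$, which fixes $v_I$, lies in the stabiliser $\mathrm{Stab}_{U(h)}(v_I)$ --- a parabolic $K$-subgroup whose unipotent radical contains these transvections. Using the commutator identity of equation \ref{epsilon} and conjugating by the complex reflections $s_i^2$ ($a\leq i<b$), one computes explicit transvections along $v_I$ inside $g_n(k,d)(P_I)$. Lemma \ref{complexreflections} together with Corollary \ref{fullcorollary}, applied to the absolutely irreducible action of the $s_i^2$ on the quotient $V_I/\langle v_I\rangle$, then shows that these transvections already span a finite-index subgroup of the integer lattice of the unipotent radical of $\mathrm{Stab}_{U(h)}(v_I)$.

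Finally, choose the two intervals $I_1,I_2$ produced by the pigeon-hole so that $h(v_{I_1},v_{I_2})\neq 0$; this is possible because the tridiagonal form of $h$ and the hypothesis $n\geq 2d$ leave enough freedom in the placement of the three collision indices $j_0<j_1<j_2$ (if necessary one refines the argument by using all collisions, or allowing the intervals to overlap in one endpoint). Under this non-orthogonality, the parabolic subgroups $\mathrm{Stab}_{U(h)}(v_{I_\alpha})$ become opposite, so $\Gamma$ contains finite-index subgroups of both integer unipotent radicals and Theorem \ref{bamise} gives that $\Gamma$ has finite index in $SU(h)(O_d)$, hence in $U(h)(O_d)$. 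The principal obstacle is the unipotent-generation step: ensuring that the commutator relations in $P_I$ produce not just some transvections but a finite-index lattice in the integer radical, which is where the explicit form of the Gassner matrices in Lemma \ref{epsilonbasis} and the $\Gamma$-orbit bookkeeping of Corollary \ref{fullcorollary} are essential.
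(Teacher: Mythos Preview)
Your overall strategy---pigeon-hole to force a degenerate sub-Gassner module, unipotent elements from the degeneracy, then Theorem~\ref{bamise}---is exactly the one the paper uses.  There is, however, a genuine gap in the step where you claim to fill the unipotent radical of $\mathrm{Stab}_{U(h)}(v_I)$.

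The group $P_I$ acts trivially on the $\e_j$ with $j\notin\{a,\dots,b-1\}$, so every element you produce from $P_I$ fixes $V_I^{\perp}$ pointwise.  But the unipotent radical of the stabiliser of the isotropic line $\langle v_I\rangle$ in the \emph{full} unitary group $U(h)$ on $V_n$ is a Heisenberg group whose abelianisation is (essentially) $v_I^{\perp}/\langle v_I\rangle$, which has $E_d$-dimension $n-2$.  The transvections you build from $P_I$ only account for the summand coming from $V_I/\langle v_I\rangle$, of dimension $|I|-2$; for $n$ large this is a subgroup of \emph{infinite} index in the integral radical.  Lemma~\ref{complexreflections} and Corollary~\ref{fullcorollary}, applied on $V_I/\langle v_I\rangle$, cannot repair this: they only tell you the $P_I$-module generated is large inside $V_I$, not inside $V_n$.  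So the hypothesis of Theorem~\ref{bamise} is not verified.

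The paper handles this in two moves you are missing.  First, it proves the result by \emph{induction on $n$}, starting at $n=l+m$ where (after a permutation) the two intervals exactly cover $\{1,\dots,n\}$; at that base level the degenerate sub-Gassner module $V_I$ has codimension one in $V_n$ and one really does capture the whole Heisenberg radical (this is Proposition~\ref{unipotentradical} and the paragraph following it).  The inductive step from $n$ to $n+1$ is Lemma~\ref{inductive}, using the two codimension-one subspaces $V_n$ and $V_n'$.  Second, the opposite parabolic is \emph{not} obtained by arranging $h(v_{I_1},v_{I_2})\neq 0$.  Your pigeon-hole produces adjacent intervals, so $v_{I_1}$ is supported on $\e_j$ with $j\leq j_1-1$ and $v_{I_2}$ on $\e_j$ with $j\geq j_1+1$; by the tridiagonal shape of $h$ these are automatically \emph{orthogonal}, and the parenthetical ``refinement'' does not change this.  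The paper instead uses the two orthogonal isotropic vectors to certify $K$-rank $\geq 2$, and then gets the opposite parabolic by a Zariski-density argument (Lemma~\ref{zariskidense}): once $\Gamma$ is Zariski dense in $SU(h)$ it meets the big Bruhat cell, and conjugating the already-constructed radical by such an element lands in an opposite one (Proposition~\ref{opposite}).
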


The full braid  group $B_{n+1}$ has a representation,  called {\it the
reduced Burau representation} (\cite{Bir}, p.118, Example 3),
\[\rho _n(q):  B_{n+1}\ra GL_n(\Z[q,q^{-1}]).\] Since  the restriction
to $P_{n+1}$  of the reduced Burau representation  at primitive $d$-th
roots  of  unity  is  the reduced  Gassner  representation  $g_n(k,d)$
evaluated at primitive  $d$-th roots of unity (when  all the $k_i$ are
equal  to $1$), the  following Theorem  is a  special case  of Theorem
\ref{purebraidmainth}.

\begin{theorem} \label{buraumainth}  If $d\geq  3$ and $n\geq  2d$ and
all the $k_i$ are $1$ , then the image of the Burau representation
\[\rho _n(d):B_{n+1}\ra U(h)(O_d)\]  evaluated at all primitive $d$-th
roots of  unity, is a  subgroup of finite  index. In other  words, the
monodromy group $\rho _n(d)(B_{n+1})$ is an arithmetic group.
\end{theorem}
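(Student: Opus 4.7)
The plan is to deduce Theorem~\ref{buraumainth} as an immediate corollary of Theorem~\ref{purebraidmainth}, exploiting the coincidence between the two representations noted in the paragraph preceding the statement. When $k_1 = \cdots = k_{n+1} = 1$, the coprimality hypothesis $\gcd(k_i, d) = 1$ is automatic, and the specialization map $R = \Z[X_i^{\pm 1}] \to \Z[\omega _d]$ used to define $g_n(k,d)$ sends every $X_i$ to the single root of unity $\omega _d$, so that the images in $GL_n(\Z[\omega _d])$ really do make sense as a \emph{restriction} of the Burau representation of $B_{n+1}$.

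First I would verify that the restriction of the reduced Burau representation $\rho _n(d)$ to $P_{n+1}$ coincides with the reduced Gassner representation $g_n(k,d)$ at $d$-th roots of unity in the case $k_i \equiv 1$. Substituting $X_i = X_{i+1} = \omega _d$ into the $3\times 3$ matrix of $T_i = s_i^2$ in the $\e$-basis computed in Lemma~\ref{epsilonbasis} recovers the standard reduced Burau matrix for $s_i^2$ specialized at $q = \omega _d$. Since the $s_i^2$ together with their $B_{n+1}$-conjugates generate $P_{n+1}$ (subsection~\ref{purebraidsubsection}), this comparison on generators suffices for the identification of the two representations of $P_{n+1}$.

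With this identification in place, Theorem~\ref{purebraidmainth} applied with $k_i \equiv 1$ yields that $\rho _n(d)(P_{n+1}) = g_n(1,d)(P_{n+1})$ is already of finite index in $U(h)(O_d)$. To finish, I would use that $P_{n+1}$ is a normal subgroup of finite index in $B_{n+1}$, the quotient being the symmetric group $S_{n+1}$. Consequently $\rho _n(d)(B_{n+1})$ contains $\rho _n(d)(P_{n+1})$ as a subgroup of index at most $(n+1)!$, so it too has finite index in $U(h)(O_d)$, proving arithmeticity. The only content of this argument is the routine matrix identification in the first step; the genuine obstacle — producing enough unipotent elements in the image to feed into the Bass--Milnor--Serre--Tits machine of Theorem~\ref{bamise} — is handled inside the proof of Theorem~\ref{purebraidmainth} and does not reappear here.
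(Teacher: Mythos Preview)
Your proposal is correct and matches the paper's own approach: the paper explicitly states, in the paragraph immediately preceding Theorem~\ref{buraumainth}, that this theorem is a special case of Theorem~\ref{purebraidmainth} via the identification of the Burau restriction to $P_{n+1}$ with the Gassner representation at $k_i\equiv 1$, and then simply cites \cite{Ve2} for an independent earlier proof. Your final step is slightly over-argued --- once $\rho_n(d)(P_{n+1})$ has finite index in $U(h)(O_d)$, the larger group $\rho_n(d)(B_{n+1})\supset \rho_n(d)(P_{n+1})$ trivially does too, without any need to bound the index $[\rho_n(d)(B_{n+1}):\rho_n(d)(P_{n+1})]$.
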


Theorem   \ref{buraumainth}  was  proved   in  \cite{Ve2},   by  using
properties of the Burau representation at roots of unity, and by using
induction    for   all    $n\geq   2d$.    The   proof    of   Theorem
\ref{purebraidmainth} is similar, and we use properties of the reduced
Gassner  representations  at  roots  of unity.  These  properties  are
essentially well known,  but we need a precise  form of these results.
Theorem  \ref{purebraidmainth}  will be  proved  at  the  end of  this
section, after many preliminary results.

\subsection{Irreducibility}  As we have  seen before,  the reduced
Gassner  representation has a  nondegenerate invariant  skew hermitian
form $h$ with  values in the field of fractions $\Omega  $ of the ring
$R=\Z[X_1^{\pm 1},  \cdots, X_{n+1}^{\pm 1}]$  of Laurent polynomials,
which  is   preserved  by  the  group  $P_{n+1}$   under  the  Gassner
representation. This was determined  on the basis $\e_i$ in subsection
\ref{gassnerhermitian}. By  Lemma \ref{epsilonbasis}, it  follows that
the elements $s_i^2$ are complex reflections.

\begin{proposition}  \label{irreducible}  

{\rm  [1]} If  $R=\Z[X_1^{\pm  1}, X_2^{\pm  1}, \cdots,  X_{n+1}^{\pm
1}]$, and  $\Omega $ is its  quotient field, then  the reduced Gassner
representation
\[G_n(X): P_{n+1}\ra GL_n(R)\subset GL_n(\Omega ),\]
is irreducible. \\

{\rm [2]} The central element $\Delta ^2\in P_{n+1}$ where 
\[\Delta  =\Delta _n  =(s_1s_2\cdots      s_n)(s_1s_2\cdots     s_{n-1})\cdots
(s_1s_2)(s_1),\]  acts by multiplication  by the  scalar $X_1X_2\cdots
X_{n+1}$ on the reduced Gassner representation. \\

{\rm [3]} If  $W\subset R^n$ is an additive  subgroup stable under the
action  of $P_{n+1}$  then there  exists a  scalar $\lambda \in  R$  
such that $\lambda (R^n)\subset W$.
\end{proposition}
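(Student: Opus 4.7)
Part [1] is immediate from Lemma~\ref{complexreflections}. By Lemma~\ref{epsilonbasis}, each $s_i^2$ is a complex reflection with unique non-$1$ eigenvector $\e_i$ (eigenvalue $X_iX_{i+1}$), and the matrix entries $(s_i^2-1)(\e_{i+1})=(1-X_i)\e_i$, $(s_i^2-1)(\e_{i-1})=X_i(1-X_{i+1})\e_i$ have nonzero coefficients. Together with the nondegenerate invariant skew-hermitian form $h$ of \S\ref{gassnerhermitian} (for the involution $X_i\mapsto X_i^{-1}$ on $R$), the hypotheses of Lemma~\ref{complexreflections} are met, so the subgroup $\langle s_i^2\rangle\subset\Gamma$ already acts absolutely irreducibly on $\Omega^n$.

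For part [2], $\Delta^2$ is central in $B_{n+1}$, so by part [1] and Schur's lemma it acts as a scalar on $\Omega^n$. To compute the scalar, use Artin's theorem: $\Delta^2$ conjugates each $x_j\in F_{n+1}$ by $T=x_1x_2\cdots x_{n+1}$. In the semidirect product $F/[K,K]=K^{ab}\rtimes H^{ab}$, the image of $T$ is $(v,\pi)$ with $\pi=X_1\cdots X_{n+1}$ and $v=\sum X_1\cdots X_{i-1}e_i$ the invariant vector (\ref{invariantelement}). A direct calculation using the multiplication rule in the semidirect product (Lemma~\ref{semidirect}) gives $\Delta^2(e_i)=\pi\, e_i+(1-X_i)v$. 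Since $(1-X_i)v\in Rv$, reduction modulo $Rv$ yields $\Delta^2=\pi\cdot\mathrm{id}$ on the reduced Gassner representation.

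For part [3] the plan is in two steps. First, apply Lemma~\ref{complexreflections} to the additive subgroup $W$ (hypotheses verified in part [1]): there exists $\lambda_0\ne 0$ in $R$ with $\lambda_0\e_i\in W$ for every~$i$, so $\lambda_0\,\Z[\Gamma](\e_1,\ldots,\e_n)\subset W$ by $\Gamma$-stability. Second, upgrade via Corollary~\ref{fullcorollary}. The key observation is that part [2] carries over verbatim to every sub-pure-braid group $P_{[a,b]}\subset P_{n+1}$ on an interval $\{a,a+1,\ldots,b\}$ of consecutive strands: its full twist $\Delta^2_{[a,b]}$ acts on $\e_j$ as the scalar $\pi_{[a,b]}:=X_aX_{a+1}\cdots X_b$ whenever $a\leq j$ and $j+1\leq b$. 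Thus the group $H_i\subset R^*$ of units $h$ realized by $h\e_i=\gamma(\e_i)$ for some $\gamma\in\Gamma$ contains every $\pi_{[a,b]}$ with $[a,b]\supset\{i,i+1\}$; the ratios $\pi_{[a,b]}/\pi_{[a+1,b]}=X_a$ and $\pi_{[a,b]}/\pi_{[a,b-1]}=X_b$ then place each variable $X_a\notin\{X_i,X_{i+1}\}$ into~$H_i$. A small pigeonhole across~$i$ (using products like $\pi_{[1,3]}/(X_1X_3)=X_2$ to catch the exceptional variables) shows $H=H_1\cdots H_n$ contains every~$X_a$, so $\Z[H]=R$. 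Corollary~\ref{fullcorollary}~[2] then produces $\lambda\ne 0$ with $\lambda R\e_j\subset\Z[\Gamma](\e_1,\ldots,\e_n)$ for all~$j$, and combining with the first step gives $(\lambda_0\lambda)R^n\subset W$.

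The main obstacle is the middle step of part [3]: observing that part [2] generalizes to the full twist of every sub-pure-braid group on an interval (yielding the family of scalars $\pi_{[a,b]}$), and then using the ratio identities together with a small pigeonhole to place the entire monomial group $\langle X_1,\ldots,X_{n+1}\rangle\subset R^*$ inside $H$, so that Corollary~\ref{fullcorollary} becomes effective in bridging from $\lambda_0\e_i\in W$ to $\lambda R^n\subset W$.
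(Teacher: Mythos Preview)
Your proof is correct and follows essentially the same approach as the paper's: Part~[1] via Lemma~\ref{complexreflections}, Part~[2] by Schur's lemma plus the explicit computation of $\Delta^2(x_j)$ as conjugation by $x_1\cdots x_{n+1}$, and Part~[3] by producing enough monomial units acting on the $\e_i$ so that Corollary~\ref{fullcorollary} applies. The only (harmless) difference is that in Part~[3] you invoke full twists $\Delta^2_{[a,b]}$ on \emph{arbitrary} consecutive intervals, whereas the paper uses only the initial intervals $[1,i+1]$ (the elements $C_i=\Delta_i^2$) together with $s_i^2$; both routes yield the same monomial group $\langle X_1,\ldots,X_{n+1}\rangle\subset H$ and finish identically.
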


\begin{proof}  

[1] At  the beginning  of this subsection,  we have verified  that the
conditions  of Lemma  \ref{complexreflections} are  satisfied.  By the
result   of  \cite{Long}   quoted  earlier,   $\Omega  ^n$   admits  a
nondegenerate     hermitian     form.      Therefore,     by     Lemma
\ref{complexreflections},  the representation  $g_n(X)$  is absolutely
irreducible. \\

[2] The element $\Delta ^2$ is  central in $P_{n+1}$; by part [1], the
central  element acts by  a scalar,  call it  $\lambda $.   Write $\pi
_{n+1}$ for the product  $X_1X_2\cdots X_{n+1}$. We compute the scalar
$\lambda  $  by finding  the  effect of  $\Delta  ^2$  on the  element
$e_1$. Consider  the element $x_1\in F_{n+1}$. A  calculation (see the
formulae  for the  action  of $s_i$  on  the free  group $F_{n+1}$  in
subsection (\ref{artinfree})), shows that the action of $\Delta ^2$ on
$x_1$ is given by (notation: in a group, $^y(x)=yxy^{-1}$)
\[\Delta  ^2   (x_1)=  ^{x_1x_2\cdots   x_{n+1}}  (x_1)  .\]   In  the
semi-direct product $K^{ab}\rtimes F_{n+1}$, the element $x_i$ maps to
${\overline x_i}=(e_i,X_i)=e_iX_i$.  Hence this equation then becomes
\[\Delta ^2 (e_i,X_i)= \Delta   ^2  (e_1X_1)=   
(v\pi   _{n+1})(e_1X_1)\pi  _{n+1}^{-1}v^{-1}=\]
\[=  (\pi_{n+1}e_1+(1-X_1)v, X_1)\] Comparing the vector parts, we get 
\[\Delta ^2  (e_1)=X_1X_2\cdots X_{n+1}(e_1)+(1-X_1)v.\] This is
in the  Gassner representation space  $K^{ab}$. Going modulo  the line
through $v$, we see that
\[\Delta ^2 (e_1)=\pi  _{n+1}e_1=X_1X_2\cdots X_{n+1}e_1 \quad {\rm (}
mod ~~Rv {\rm  )}\] in the reduced Gassner  representation.  This proves
part [2] of the Proposition.\\

We will now prove  part [3]. Fix $i$ with $1\leq i  \leq n$. The group
$P_{i+1}$  operates on  the  $R$ module  generated  by $\e_1,  \cdots,
\e_i$. This  module is nothing but the  reduced Gassner representation
$g_i(X)$. Consequently,  the element  $C_i=(\Delta _i^2)$ acts  by the
scalar   $X_1\cdots   X_{i+1}$   on   the   vectors   $\e_1,   \cdots,
\e_i$. Moreover, $s_i^2(\e_i)=  X_iX_{i+1}(\e_i)$.  In particular, the
group $H_i$  generated by  the elements $X_1\cdots  X_{i+1}, X_1\cdots
X_{i+2}, \cdots, X_1\cdots X_{n+1};  X_iX_{i+1}$ has the property that
the $\Gamma $ module generated  by $\e_i$ contains all elements of the
form $h  (\e_i)$ for  every $h\in  H_i$. Note that  $H_i$ is  also the
group generated  by the elements $X_1\cdots  X_{i+1}, X_{i+2}, \cdots,
X_{n+1},  X_iX_{i+1}$  (successive  ratios  of  the  previous  set  of
elements, together with the first and the last one of the previous set
of elements). \\

Thus  the group $H$  generated by  $H_1, \cdots, H_n$ is  the group
generated by $X_1X_2, X_3,  \cdots, X_{n+1}$ (contribution from $i=1$)
and  $X_2X_3$ (contribution  from  $i=2$). This  is  clearly the  group
generated by $X_1, \cdots, X_{n+1}$. By Corollary \ref{fullcorollary},
this  means that  there exists  a $\lambda  \neq 0$  in $R$  such that
$\lambda  X_1^{\Z}\cdots X_{n+1} ^{\Z}  (\e_i) $  lies in  the $\Gamma
$-module generated  by all  the $\e_i$. Since  monomials in  the $X_i$
generate $R$, this means that $\lambda (R\e_j) \subset \Z[\Gamma] \e_i$
for all $i,j$. \\

Now,  the  additive  group  $W$  stable under  the  action  of  $\Gamma
=P_{n+1}$ has the property that it contains a non-zero scalar multiple
$\mu  (\e_i)$ for  all  $i$ by  Lemma \ref{complexreflections}.  Hence
$\lambda \mu (R\e_i) \subset W$.  This is [3] of the proposition.
\end{proof}

\begin{notation}  Suppose  $\mathfrak  a$  is  a prime  ideal  in  $R$
invariant under the involution on  $R$ given by $X_i \mapsto X_i^{-1}$
for each $i$.  Consider  the integral domain $A=R/{\mathfrak a}$.  Let
$B$ denote  the invariants  in $A$ under  the involution.  We  get the
corresponding  reduced Gassner representation  $g_n(A)$ on  the module
$\oplus _{i=1}^n  A\e _i$ of $P_{n+1}$.  Write $t_i$ for  the image of
$X_i$  under the  quotient  map $R\ra  A=R/{\mathfrak  a}$.  The  skew
hermitian form $h$ reduced modulo $\mathfrak a$ gives a skew hermitian
form  on $A^n$  and  the image  of  $P_{n+1}$ under  $g_n(A)$ lies  in
$U(h)(A)$. Let $E$ denote the quotient field of $A$. Then $W(A)=\oplus
A \e_i $ is a subgroup of the $E$-vector space $W(E)=\oplus E \e_i$.
\end{notation}

\begin{proposition} \label{gassnerspecialisation} 

{\rm [1]} If $t_1 \cdots t_{n+1}\neq 1$ then $h(A)$ is non-degenerate and 
the representation $g_n(A)$ is irreducible.  \\

{\rm  [2]}  The central  element  $\Delta  _n^2$  of $P_{n+1}$  {\rm [}where
$\Delta  _n =(s_1\cdots  s_n)\cdots (s_1s_2)s_1${\rm ]}  acts by  the scalar
$t_1\cdots t_{n+1}$ on the representation $g_n(A)$. \\

{\rm   [3]}  If   $t_1\cdots   t_{n+1}\neq  1$   then  every   nonzero
$P_{n+1}$-invariant subgroup  of $A^n=\oplus A\e_i$  contains $\lambda
(A^n)$ for some non-zero element $\lambda \in A$.
\end{proposition}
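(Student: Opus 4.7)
All three parts parallel the generic statements in Proposition \ref{irreducible}; the strategy is to run those arguments after verifying that the specialised objects do not degenerate.  Throughout I assume $t_i\neq 1$ for each $i$ (this is implicit in having $h$ descend to $A$, and is automatic in the application where each $t_i$ is a $d$-th root of unity with $1\le k_i\le d-1$).

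For \textbf{[1]}, apply Lemma \ref{abdul}[2]: the determinant of $h(X)$ is $(1-X_1\cdots X_{n+1})/\prod_i(1-X_i)$, which under the specialisation becomes $(1-t_1\cdots t_{n+1})/\prod_i(1-t_i)$ and is nonzero exactly when $t_1\cdots t_{n+1}\neq 1$; this gives nondegeneracy of $h(A)$.  For absolute irreducibility, read off from Lemma \ref{epsilonbasis} that the matrices of $s_i^2$ specialise to complex reflections on $W(E)=\bigoplus E\e_i$ whose off-diagonal entries $1-t_i$ and $t_i(1-t_{i+1})$ remain nonzero; the hypotheses of Lemma \ref{complexreflections} therefore hold over $A$ and its conclusion gives absolute irreducibility of $g_n(A)\otimes_A E$.

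Part \textbf{[2]} is an immediate consequence of Proposition \ref{irreducible}[2]: since $\Delta_n^2$ acts on $\bigoplus R\e_i$ as multiplication by $X_1\cdots X_{n+1}$, reduction modulo $\mathfrak a$ produces the scalar $t_1\cdots t_{n+1}$ on $\bigoplus A\e_i$.

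For \textbf{[3]}, I mimic the proof of Proposition \ref{irreducible}[3].  Applying part [2] to the sub-pure-braid groups $P_{j+1}\subset P_{n+1}$ (for $j\ge i$) acting on $\bigoplus_{k\le j}A\e_k$, the element $\Delta_j^2$ acts on $\e_i$ by the scalar $t_1\cdots t_{j+1}$; together with the scalar $t_it_{i+1}$ of $s_i^2$, this yields a subgroup $H_i\subset A^{*}$ each of whose elements arises as the scalar of some $\gamma\in\Gamma$ acting on $\e_i$.  The same successive-ratio manipulation as in Proposition \ref{irreducible}[3] shows that the combined subgroup $H$ generated by $H_1,\ldots,H_n$ contains every $t_k$.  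Since $A=R/\mathfrak a$ is generated as a $\Z$-algebra by $t_1,\ldots,t_{n+1}$ together with their inverses (which already lie in $H$), the additive span $\Z[H]$ coincides with $A$.  Corollary \ref{fullcorollary} then supplies a nonzero $\lambda\in A$ with $\lambda A\e_j\subset\Z[\Gamma]\e_i$ for all $i,j$.  Finally, by [1] and Lemma \ref{complexreflections}, any nonzero $P_{n+1}$-stable additive subgroup $W\subset A^n$ contains $\mu\e_i$ for some $\mu\neq 0$ and every $i$; $\Gamma$-invariance gives $\mu\Z[\Gamma]\e_i\subset W$, and combining the two inclusions yields $\mu\lambda A\e_j\subset W$ for all $j$, so $\mu\lambda A^n\subset W$.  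The main novelty over Proposition \ref{irreducible} is the identification $\Z[H]=A$, which replaces the generic observation that monomials in the $X_i$ additively span $R$; this is the step where the specialisation could in principle fail and so is the one deserving the most care.
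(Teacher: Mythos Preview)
Your proposal is correct and follows exactly the route the paper takes: the paper's entire proof is the single sentence ``The proof of Proposition \ref{irreducible} can be repeated for the quotient $R/{\mathfrak a}$ in place of $R$,'' and you have carried out precisely that repetition. Your extra care in isolating the hypothesis $t_i\neq 1$ (needed so that the off-diagonal entries of the reflections and the denominator in Lemma \ref{abdul}[2] survive specialisation) and in making the identification $\Z[H]=A$ explicit is welcome, since these are the only places the argument could degenerate.
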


The  proof of Proposition  \ref{irreducible} can  be repeated  for the
quotient $R/{\mathfrak a}$ in place of $R$.

\section{Proof of Theorem \ref{purebraidmainth}}

We will  first prove some  preliminary results on the  reduced Gassner
representation at $d$-th roots of unity. We will then use induction to
deduce Theorem \ref{purebraidmainth} from these results.

\subsection{The reduced Gassner representation at roots of unity}

Consider  the  reduced   Gassner  representation  $g_n(k):  P_{n+1}\ra
GL_n(R)$ where  $R$ s the  Laurent polynomial ring in  $n+1$ variables
$X_i$  with  integral  coefficients  and  $R ^n$  is  the  free  module
$W_n(X)=\oplus  _{i=1}^n R  \e_i$.  We  now specialise  to $X_i\mapsto
t_i=  t^{k_i}$. The  resulting representation  is from  $P_{n+1}$ into
$GL_n(A_d)\subset  GL_n(E_d)$  where $E_d$  is  the $d$-th  cyclotomic
extension of  $\Q$ and  $A_d$ the  ring of integers  in $E_d$,  and is
denoted $g_n(k,d)$. Denote the vector space $W_n(k,d)$.

\begin{lemma}    \label{gassnerirreducible}   The    reduced   Gassner
representation  evaluated at all  primitive $d$-th  roots of  unity is
irreducible if and only if $t_1t_2\cdots t_{n+1}\neq 1$. \\

When $t_1\cdots  t_{n+1}=1$, the  representation space $W_n(k,d)$
of the representation $g_n(k,d)$  contains a non-zero invariant vector
$w$ and the restriction of the quotient representation $W_n(k,d)/E_dw$
to the subgroup  $P_n$ is isomorphic to $g_{n-1}(k,d)$.   If we denote
the   quotient  representation   by   ${\overline  g_n(k,d)}$,   then
${\overline g_n(k,d)}$ is irreducible.\\

If we denote by $\pi _i$ the product $t_1t_2\cdots t_i$, then 
the invariant element $w\in W_n(k,d)$ is given by
\[w=\sum (1-\pi _i)\e_i.\]
\end{lemma}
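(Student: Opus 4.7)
The plan is to build on Proposition \ref{gassnerspecialisation} and on the explicit invariant vector $v$ constructed in subsection \ref{supplement} (equations \ref{invariantelement} and \ref{invariant}). For the irreducibility assertion when $\pi _{n+1}:= t_1 t_2 \cdots t_{n+1} \neq 1$, I would apply part [1] of Proposition \ref{gassnerspecialisation} directly, with $A$ the image of $R$ in $A_d = \Z[\omega _d]$: the skew hermitian form $h(k,d)$ is then non-degenerate, and irreducibility of $g_n(k,d)$ follows at once.

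For the converse direction, suppose $\pi _{n+1} = 1$. The generic $P_{n+1}$-invariant element $v$ of the full Gassner representation, as displayed in equation \ref{invariant}, has the form
\[ v = (1-\pi _1)\e_1 + (1-\pi _2)\e_2 + \cdots + (1-\pi _n)\e_n + (1-\pi _{n+1})v_{n+1} \in L, \]
where $L=\bigoplus R v_i$ is the $P_{n+1}$-stable free $R$-module containing $W=\bigoplus _{i=1}^n R\e_i$ with $\e_i=v_i-v_{i+1}$. Specialisation $X_i\mapsto t_i$ commutes with the $P_{n+1}$-action (since the Gassner matrices have $R$-entries), so the image of $v$ is invariant in the specialised module; and when $\pi _{n+1}=1$ the $v_{n+1}$-term drops out, so the image lies in $W_n(k,d)=\bigoplus E_d\e_i$ and equals
\[ w = \sum _{i=1}^n (1-\pi _i)\e_i, \]
yielding both the formula for $w$ and the reducibility of $g_n(k,d)$.

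To identify the restriction of $\overline{g_n(k,d)}=W_n(k,d)/E_dw$ to $P_n$ with $g_{n-1}(k,d)$ (the latter understood as the reduced Gassner representation of $P_n$ with parameters $(k_1,\ldots,k_n)$), I would use that $P_n\subset P_{n+1}$ is generated by those $A_{r,s}$ with $s\leq n$, and that such automorphisms fix the generator $x_{n+1}$ of $F_{n+1}$ pointwise; in particular they fix $X_{n+1}$ and, by the formulas of subsection \ref{gassnersubsection}, they preserve the $R$-submodule of $K^{ab}$ spanned by $e_1,\ldots,e_n$. Since $\gcd(k_{n+1},d)=1$ forces $t_{n+1}\neq 1$, the coefficient $1-\pi _n = 1-t_{n+1}^{-1}$ of $\e_n$ in $w$ is non-zero, so the classes of $\e_1,\ldots,\e_{n-1}$ give a basis of $\overline{g_n(k,d)}$. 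A direct translation of the formulas of Lemma \ref{epsilonbasis} for $s_i$ with $i\leq n-1$ then matches the $P_n$-action on this basis with $g_{n-1}(k,d)$.

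Irreducibility of $\overline{g_n(k,d)}$ is then a bonus: since $t_1\cdots t_n=t_{n+1}^{-1}\neq 1$, part [1] of Proposition \ref{gassnerspecialisation} applied to $g_{n-1}(k,d)$ yields irreducibility as a $P_n$-representation, and hence a fortiori as a $P_{n+1}$-representation. The only step that requires some care is the matching of matrices in the third paragraph; I expect no serious obstacle, since the formulas in Lemma \ref{epsilonbasis} for $s_i^2$ with $i\leq n-2$ do not involve $\e_n$ at all, and $\e_{n-1}$ is an eigenvector of $s_{n-1}^2$ with eigenvalue $X_{n-1}X_n$, which matches the corresponding formula in $g_{n-1}(k,d)$.
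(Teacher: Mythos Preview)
Your proof is correct and follows essentially the same strategy as the paper's: invoke non-degeneracy of $h$ via Lemma \ref{abdul} (equivalently Proposition \ref{gassnerspecialisation}[1]) for the irreducible case, specialise the invariant vector from equation \ref{invariant} to obtain $w$ when $\pi_{n+1}=1$, and then identify the quotient with a smaller reduced Gassner representation restricted to a copy of $P_n$. The only difference is a symmetric choice: the paper takes $P_n$ to be the pure braids in the group generated by $s_2,\ldots,s_n$ (strands $2,\ldots,n+1$), uses the basis $w,\e_2,\ldots,\e_n$ (the coefficient $1-t_1$ of $\e_1$ in $w$ being nonzero), and gets irreducibility from $t_2\cdots t_{n+1}=t_1^{-1}\neq 1$; you instead take $P_n$ on strands $1,\ldots,n$, use the basis $\e_1,\ldots,\e_{n-1},w$, and get irreducibility from $t_1\cdots t_n=t_{n+1}^{-1}\neq 1$. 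Both choices work for the same reason, and neither buys anything the other does not.
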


\begin{proof} If $t_1\cdots t_{n+1}\neq 1$, then the specialisation of
the  Hermitian  form  $h$  at  $t_1,  \cdots,  t_{n+1}$  has  non-zero
determinant,    by   Lemma    \ref{abdul}.    Therefore,    by   Lemma
\ref{complexreflections},    the    representation    $g_n(k,d)$    is
irreducible. \\

Suppose $t_1t_2\cdots t_{n+1}=1$. Consider the element 
\[v=e_1+t_1e_2+\cdots     +    t_1t_2\cdots    t_{n-1}e_n+t_1t_2\cdots
t_{n+1}e_{n+1}.\] This  is the vector part of  the $P_{n+1}$ invariant
element   $x_1x_2\cdots   x_{n+1}$    in   the   semi-direct   product
$K^{ab}\rtimes t^{\Z}$, and is hence  invariant. 
The expression for $v$ in terms of $\e_i$ and $v_{n+1}$ shows that 
\[v=\sum     _{i=1}^n    (1-t_1t_2\cdots     t_i)\e_i    +(1-t_1\cdots
t_{n+1})v_{n+1}.\]  By assumption  on  the $t_i$,  the coefficient  of
$v_{n+1}$  is  zero,  and  hence  $v$  lies  in  the  reduced  Gassner
representation $W_n(k,d)$ (the span of the $\e_i$). \\

Let $P_n$  be the set of pure  braids in the group  generated by $s_2,
s_3, \cdots,  s_n$. The module  $W=W_n(k,d)$ is spanned by  the vector
$v$  and $\e_2,  \cdots, \e_n$.   Therefore, $g_n(k,d)$  restricted to
$P_n$ splits into a direct sum of the modules $Rv$ and $g_{n-1}(k,d)$;
the   latter  is   irreducible  by   part  [1],   since  $t_2t_3\cdots
t_{p+1}=t_1^{-1}\neq   1$.   We   have  therefore   proved   that  the
representation  $\overline  {g_n(k,d)}$  restricted  to $P_n$  is  the
representation  $g_{n-1}(k,d)$  and is  irreducible  for the  subgroup
$P_n$; hence it is irreducible for the bigger group $P_{n+1}$.
\end{proof}

The   lemma  is   due   essentially  to   Abdulrahim  \cite{Abd}   and
\cite{Abd2}.  We have  derived the  lemma since  we need  the explicit
formula for the invariant element, in terms of the basis $\e_i$. \\

\subsection{Gassner representations with 
degenerate Hermitian forms}

Suppose that $t_i$ are all primitive $d$-th roots of unity 
and that  $t_1t_2\cdots t_p=1$. Consider the  basis $\e_1, \cdots,
\e_{p-1}$ of $W$. This contains the element
\[w= \sum _{i=1}^{p-1} (1-t_i)\e_i.\] Therefore, it is part of a basis
of  $W$: $w,\e_2,  \cdots,  \e_{p-1}$. Since  $w$  is invariant,  with
respect to this basis, every element $g$ of the pure braid group $P_p$
has the matrix form
\[\begin{pmatrix} 1  & v  \\ 0_{p-2} &  \alpha \end{pmatrix}  \] where
$\alpha $  is the matrix of  $g$ acting on the  quotient $W/E_dw$ with
respect to the basis $\e_2,\e_3,  \cdots , \e_{p-1}$. The element $C'=
(\Delta ' )^2$ where
\[\Delta '  =(s_2\cdots s_p)(s_2\cdots s_{p-2})\cdots (s_2s_1)(s_2),\]
is  central in  the pure  braid group  $P_{p-2}$ and  hence acts  by a
scalar on the irreducible  representation; the scalar is $t_2t_3\cdots
t_p=t_1^{-1}\neq 1$  (by part [2] of  Proposition \ref{gassnerspecialisation}).  
Therefore, the commutator
\[u=[g,(\Delta ' )^2] = \begin{pmatrix} 1 & v' \\ 0_{p-2} & 1_{p-2}
\end{pmatrix} \] acts by an  upper triangular unitary matrix. 

\begin{proposition} \label{unipotentradical} Let $t_1, \cdots, t_p$ be
primitive $d$-th  roots of unity with $t_1\cdots  t_p=1$. Consider the
representation  $g(p,t):  P_p\ra  U(h)(O_d)$, the  reduced  Gassner
representation $g(p,X)$ specialised to  $X_i \mapsto t_i$. Then\\

[1] The unitary group $U(h)$ has a unipotent radical isomorphic to the
$P_{p-1}$   module   $W^*$,  where   $W$   is   the  reduced   Gassner
representation $g(p-1,t)$. \\

[2] If  $g= s_1^2$ and  $\Delta '$ is  as in the  preceding paragraph,
then the image of the  element $u=[g,(\Delta ')^2 ]$ under the reduced
Gassner representation $g(p,t)$ is not identity. \\

[3] The conjugates $\{huh^{-1}\}$ of $u$ for $h\in P_{p-1}$ generate a
subgroup of finite index  in the integral additive subgroup $W^*(O_d)$
of $W^*$.

\end{proposition}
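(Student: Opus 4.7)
The plan is to establish part [1] first, deducing a parabolic structure on $U(h)$ from the degeneracy of the form, and then exploit this structure in parts [2] and [3] to recognise the commutator $u$ as a nonzero element of the integral unipotent radical whose $P_{p-1}$-conjugates sweep out a finite-index subgroup.

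For part [1], I would argue that since $t_1\cdots t_p=1$, the determinant formula of Lemma \ref{abdul} part [2] forces $\det h(t)=0$, so the specialised skew-Hermitian form $h$ is degenerate (the denominator $\prod(1-t_i)$ is nonzero because the $t_i$ are primitive $d$-th roots of unity). The invariant vector $w$ produced in Lemma \ref{gassnerirreducible} lies in the radical of $h$, and the irreducibility of the quotient $\overline{g_n(k,d)}$ shown in the same lemma forces the radical to be exactly the line $E_d w$. Thus $U(h)$ is the parabolic subgroup of $GL(W)$ that stabilises this line and preserves the induced non-degenerate form $\bar h$ on $W/E_d w$; its Levi quotient is $U(\bar h)$ on $W/E_d w$, and its unipotent radical $N$ consists of the transformations $\sigma_\phi(v)=v+\phi(v)w$ indexed by linear functionals $\phi$ on $W/E_d w$ (the Hermitian condition being automatic, as $w$ lies in the radical). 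Lemma \ref{gassnerirreducible} identifies $W/E_d w$ with the reduced Gassner representation $g(p-1,t)$ of $P_{p-1}$, whence $N\cong W^*$ as $P_{p-1}$-modules.

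For part [2], both $s_1^2$ and $(\Delta')^2$ fix $w$, so both descend to operators on $W/E_d w$. Proposition \ref{gassnerspecialisation} part [2], applied to the $P_{p-1}$-representation on $W/E_d w$, forces $(\Delta')^2$ to act there as the scalar $t_2\cdots t_p=t_1^{-1}$; consequently $s_1^2$ commutes with $(\Delta')^2$ modulo $E_d w$, so $u=[s_1^2,(\Delta')^2]$ automatically lies in $N$. To check $u\neq 1$, I would write $s_1^2$ as a block matrix in the basis $w,\e_2,\dots,\e_{p-1}$, using $\e_1=\tfrac{1}{1-t_1}\bigl(w-\sum_{i\geq 2}(1-\pi_i)\e_i\bigr)$. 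The matrix takes the form $\begin{pmatrix}1&a\\0&B\end{pmatrix}$, with the first entry of the row $a$ equal to $1$ (because $s_1^2(\e_2)-\e_2=(1-t_1)\e_1$ contributes a nonzero coefficient to $w$ when $\e_1$ is re-expanded). A direct block calculation then gives $u-I=\begin{pmatrix}0&(1-t_1)aB^{-1}\\0&0\end{pmatrix}$, which is nonzero because $t_1\neq 1$.

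For part [3], since $N$ is abelian and $h(w)=w$ for every $h\in P_{p-1}$, conjugation by $h$ sends $\sigma_\phi$ to $\sigma_{\phi\circ h^{-1}}$, i.e.\ acts on $N$ by the contragredient $P_{p-1}$-representation on $(W/E_dw)^*=W^*$. The subgroup generated by $\{huh^{-1}:h\in P_{p-1}\}$ is therefore the additive subgroup of $W^*(O_d)$ generated by the $P_{p-1}$-orbit of the nonzero functional $\phi_u$ supplied by part [2]. Because $t_2\cdots t_p=t_1^{-1}\neq 1$, the dual of $W/E_dw$ is again an irreducible Gassner at roots of unity (with inverted parameters), so the hypotheses of Proposition \ref{gassnerspecialisation} part [3] transfer to it: every nonzero $P_{p-1}$-invariant additive subgroup of $W^*(O_d)$ contains $\lambda\cdot W^*(O_d)$ for some nonzero $\lambda\in O_d$, and hence the orbit generates a subgroup of finite index. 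The main technical obstacle is the block calculation in part [2], where one has to track how $\e_1$ sits in the basis containing $w$ in order to verify the nonvanishing of the matrix entry $a$; parts [1] and [3] are essentially conceptual once part [2] is in hand.
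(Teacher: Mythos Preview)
Your proposal is correct and follows essentially the same approach as the paper. The paper also computes the matrix of $s_1^2$ explicitly in the basis $w,\e_2,\ldots,\e_{p-1}$ (finding the $(1,2)$-entry equal to $1$), observes that $(\Delta')^2$ acts as $\mathrm{diag}(1,c,\ldots,c)$ with $c=t_1^{-1}$, writes down the commutator directly, and then invokes part [3] of Proposition \ref{gassnerspecialisation} on the dual module for part [3]; your block-matrix formulation and your remark that the contragredient is again a Gassner specialisation (at the inverted parameters) make the same computation slightly more conceptual, but there is no substantive difference.
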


\begin{proof} Part [1] is obvious. \\

The vector 
\[w=\sum   _{i=1}^  p  (1-t_1\cdots   t_i)\e_i,\]  (notice   that  the
coefficient of $\e_p$ is zero) is orthogonal to all the vectors $\e_1,
\cdots, \e_p$ and is invariant under all of $P_p$. We can consider the
basis  $B'$  given by  $  w,\e_2, \cdots,  \e_p$  of  the vector  space
$W=\oplus   _{i=1}^p  \e_i$.   Now   $s_1^2$  fixes   $w$.  By   Lemma
\ref{epsilonbasis},    we    have    the   equalities    $s_1^2(\e_2)=
\e_2+(1-t_1)\e_1=    \e_2+w-(1-t_1t_2)\e_2-\cdots    -    (1-t_1\cdots
t_p)\e_p$, and  $s_1^2 (\e_i)=1$ for $i\geq 3$.   Therefore the matrix
of $s_1^2$ in this basis $B'$ is of the form
\[g(p,t)(s_1^2)= \begin{pmatrix}  1 & 1 & 0 & \cdots  & 0 \\ 0  & t_1t_2 &
0 & \cdots & 0  \\ \cdots & \cdots & \cdots & \cdots  & \cdots \\ 
0 &  \cdots & 0 & 0 & 1 \end{pmatrix}.\]

Consider the pure braid  group $P_{p-1}'$ generated by $s_2^2, \cdots,
s_p^2$.  The  subspace spanned  $W'$ by $\e_2,  \cdots, \e_p$  is left
stable  under  $P_{p-1}'$  and  the resulting  representation  is  the
Gassner  representation  for  $P_{p-1}'$.  Consequently,  the  element
$(\Delta ') ^2$ acts by the scalar $c= t_2\cdots t_p=t_1^{-1}\neq 1$ on
$W'$.  Moreover, $(\Delta '  )^2 $  fixed the  vector $w$.  Hence with
respect to the basis $B'$ the element $(\Delta ' )^2$ has the matrix
\[g(p,t)((\Delta ')^2)= \begin{pmatrix} 1 & 0 &  0 & \cdots & 0 \\ 0 &
c & 0 & \cdots & 0 \\ \cdots  & \cdots & \cdots & \cdots & \cdots \\ 0
& \cdots & 0  & 0 & c \end{pmatrix}.\] It is  then clear from the above
matrix forms  that the commutator of  $s_1^2$ and $(\Delta  ' )^2$ has
the matrix form
\[g(p,t)(u)=  g(p,t)([s_1^2,  (\Delta  '  )^2])= \begin{pmatrix}  1  &
t_1^{-1}t_2^{-1} (1-c  ^{-1}) & \cdots &  0 \\ 0 &  1 & \cdots  & 0 \\
\cdots  &   \cdots  &  \cdots   &  \cdots  \\   0  &  \cdots  &   0  &
1  \end{pmatrix}.\] In  other  words,  this lies  in  the vector  group
$W^*(O_d)$  and is  a non-identity  element (since  $c\neq  1$).  This
proves part [2]. \\

Consider  the image  (under $g(p,t)$)  of the  group generated  by the
conjugates $huh^{-1}$  of $u$  by elements $h$  (of the  smaller braid
group $P_{p-1}$ generated by  $s_2, s_3, \cdots, s_{p-1}$). This image
may be  identified with an additive  subgroup $A$ of  the vector group
$W'^*$ where $W'^*$ is dual of the reduced Gassner representation $W'$
at $d$-th roots  of unity for $P_{p-1}$.  It follows  from part [3] of
Proposition \ref{gassnerspecialisation} that the additive subgroup $A$
contains a subgroup of finite index in the (dual of the) vector group
\[W'(A_d)\simeq \oplus _{i=2}^ p  A_d\e_i.\] 

\end{proof}

We  now  consider  the  ``next''  pure  braid  group  $P_{p+1}$,  with
$t_1t_2\cdots t_p=1$. The  Gassner representation takes $P_{p+1}$ into
a unitary group $U(h)$.  The product $t_1t_2\cdots t_{p+1}=t_{p+1} \neq 1$ 
since
all the $t_i$ are primitive $d$-th roots of unity. With respect to the
basis  $w,  \e_2, \cdots  \e_{p-1}$  and  $\e_p$,  the matrix  of  the
commutator $u=[s_1^2, (\Delta ')^2]$ now takes the form
\[u=  \begin{pmatrix}  1  &  v  &  \lambda  \\  0_{p-2}  &  1_{p-2}  &
\overline{v} \\ 0  & 0 & 1\end{pmatrix},\] and  the group generated by
the conjugates  $\{mum^{-1}: m\in P_{p-1}\}$  is a subgroup  of finite
index in the  Heisenberg group $H(X)$ where $H(X)$  is the subgroup of
$U(h)$ which acts unipotently on the flag
\[E_d  w  \subset  E_dw+X  \subset  V,\]  where $X$  is  the  span  of
$\e_2\cdots, \e_{p-1}$,  and $V$  is the span  of $\e_1,  \cdots, \e_p$
(the reduced Gassner representation). Hence the Gassner image contains
a  subgroup  of  finite   index  in  the  unipotent  integral 
Heisenberg  group $H(X)(O_d)$.

\begin{proposition}   \label{opposite}  Let  $t_1,t_2,   \cdots,  t_p,
t_{p+1}$ be  primitive $d$-th roots  of unity such  that $t_1t_2\cdots
t_p=1$. Denote by $\Gamma _p$ the image of the Gassner representation
\[g_p(k,d):P_{p+1}\ra  U(h)(O_d)\subset GL_p(A_d).\] Then  

[1]  there exist  two  opposite maximal  parabolic  subgroups $P$  and
$P^{-}$  of $U(h)$  such that  the  image $\Gamma  _p$ intersects  the
integral unipotent radicals $U_P(O_d)$ and $U^{-}_P(O_d)$ in subgroups
of finite index. \\

[2] In  particular, if $K-rank  (U(h))\geq 2$, then the  image $\Gamma
_p$ is an arithmetic subgroup  of {\rm (}i.e. subgroup of finite index
in{\rm )} $U(h)(O_d)$.
\end{proposition}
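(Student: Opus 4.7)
The plan is to produce two opposite parabolic $K$-subgroups $P$ and $P^{-}$ of $U(h)$ in whose integral unipotent radicals $\Gamma_p$ has finite index; part [2] will then follow at once from Theorem \ref{bamise} applied to $SU(h)$, together with the commensurability of $U(h)(O_d)$ and $SU(h)(O_d)$.

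For $P$ itself, the paragraph immediately preceding the proposition already does the work. The vector $w=\sum_{i=1}^{p-1}(1-\pi_i)\e_i$ is $P_p$-invariant and isotropic (being $P_p$-fixed, together with $P_p$-invariance of $h$, forces $w\perp \e_j$ for every $j$), so I would take $P$ to be the maximal $K$-parabolic of $U(h)$ stabilizing the $E_d$-line $[w]$; its unipotent radical is the Heisenberg group $H(X)$, and the preceding discussion shows that the conjugates of $u=[s_1^2,(\Delta')^2]$ by elements of $P_{p-1}$ already exhaust a finite-index subgroup of $H(X)(O_d)=U_P(O_d)$.

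The one genuinely new step is constructing $P^{-}$ over the integers. Lemma \ref{gassnerirreducible} applies since $t_1\cdots t_{p+1}=t_{p+1}\neq 1$, so $\Gamma_p$ acts irreducibly; in particular the hyperplane $w^{\perp}$ is not $\Gamma_p$-stable, and hence there exists $\gamma\in\Gamma_p$ with $h(w,\gamma w)\neq 0$. Then $[w]$ and $[\gamma w]$ are distinct isotropic lines spanning a hyperbolic plane, so $P^{-}:=\gamma P\gamma^{-1}$ is a $K$-parabolic subgroup of $U(h)$ opposite to $P$; since $\gamma\in U(h)(O_d)$ and $\gamma\Gamma_p\gamma^{-1}=\Gamma_p$, conjugation by $\gamma$ transports the finite-index property from $U_P(O_d)$ to $U_{P^{-}}(O_d)$, completing part [1]. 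Part [2] then follows by applying Theorem \ref{bamise} to the absolutely almost simple $K$-group $SU(h)$: the hypothesis $K$-rank$(U(h))\geq 2$ transfers to $SU(h)$ and forces the $\infty$-rank condition (since $K$ is totally real and $K_v$-rank dominates $K$-rank at each infinite place $v$), and commensurability of the integral points of $U(h)$ with those of $SU(h)$ finishes the argument. The only place requiring real care is verifying that $\gamma P\gamma^{-1}$ is genuinely opposite to $P$ — this is exactly where irreducibility enters, and it is essentially the only nonformal ingredient in the proof.
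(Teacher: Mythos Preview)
Your proof is correct and follows the same overall architecture as the paper's: obtain $P$ from the Heisenberg-group paragraph preceding the proposition, conjugate by some $\gamma\in\Gamma_p$ to get an opposite $P^{-}$, then invoke Theorem \ref{bamise}. The genuine difference lies in how you produce $\gamma$.

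The paper argues via Zariski density: it proves (Lemma \ref{zariskidense}, which in turn rests on Proposition \ref{Uzariskidense}) that the Zariski closure of $\Gamma_p$ contains $SU(h)$, and then takes $\gamma\in\Gamma_p$ lying in the big Bruhat cell $U\kappa P$, so that $\gamma P\gamma^{-1}$ is automatically opposite to $P$. Your route is more direct: you use only irreducibility of $\Gamma_p$ on $V_p$ (Lemma \ref{gassnerirreducible}) to find $\gamma$ with $h(w,\gamma w)\neq 0$, and then observe that the stabilizers of two isotropic lines pairing nontrivially under $h$ are opposite maximal parabolics. This bypasses Lemma \ref{zariskidense} and Proposition \ref{Uzariskidense} entirely, which is a pleasant economy for this particular proposition. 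The paper's approach, by contrast, establishes Zariski density as a standalone fact of independent interest.

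One small expository point: your sentence ``$w^{\perp}$ is not $\Gamma_p$-stable, and hence there exists $\gamma$ with $h(w,\gamma w)\neq 0$'' compresses the logic a bit too much. Non-stability of $w^{\perp}$ by itself only says some vector of $w^{\perp}$ is moved out, not that $w$ itself is. What you really want is that the $\Gamma_p$-span of $w$ equals all of $V_p$ (by irreducibility), hence cannot sit inside the proper hyperplane $w^{\perp}$; this immediately yields the desired $\gamma$. With that one-line fix the argument is complete.
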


\begin{proof} Since  $t_1t_2\cdots t_{p+1}=t_{p+1}\neq 1$,  it follows
that  the hermitian form  $h= h_p$  is non-degenerate.  Therefore, the
unitary group $U(h)$ is reductive. Denote by $V=V_p=E_d^p$ the natural
representation of $U(h)$ ($E_d$  is the $d$-th cyclotomic extension of
$\Q$). \\

We have already  seen in the paragraph preceding  the statement of the
proposition,  that $\Gamma  _p$ contains  a subgroup  $U_0$  of finite
index in  the integer  points of the  Heisenberg group  $H(X)$. Recall
that  $H(X)$ is  the unipotent  radical  of a  parabolic subgroup  $P$
(i.e. $P$ is the normaliser of $H(X)$ in the unitary group $U(h)$). We
will now prove  that there exists a conjugate of  $U_0$ in $\Gamma _p$
which is an arithmetic subgroup of an opposite unipotent radical. \\

By assumption, there exists an isotropic
vector $v= \sum _{i=1}^ {p-1} (1-\pi _i)\e_i$ (it is orthogonal to all
the $\e_i$  with $i\leq  p-1$ and is  therefore orthogonal  to itself).
Consequently,  we may find  a basis  
\[v=w_1, \cdots,  w_r, x_1,\cdots, x_s, w_1^*, \cdots,  w_r^*\] of $V$
where  $x_i$  are orthogonal  to  the  $w_i$  and $w_i^*$,  $w_i$  are
isotropic mutually orthogonal vectors, similarly, $w_j^*$ are mutually
orthogonal  isotropic  vectors,  and  $w_j^*(w_i)=\delta  _{ij}$  (the
Kronecker  delta  symbol).  The  intersection  of  the diagonals  with
$U(h)$ then gives a maximal torus defined over $K$, which is maximally
$K$-split over  $K$.  $N(T)$ denotes  the $K$-Weyl group  and $\kappa$
denotes the longest element in the $K$-Weyl group.\\

The following Zariski density statement is very likely true in greater
generality  (cf. Lemma  (11.5) of  \cite{Del-Mos} and  Lemma  (4.4) of
\cite{Looi} ), but we will need only this weaker version in the course
of the proof.

\begin{lemma}   \label{zariskidense}  Suppose  that   $\Gamma  \subset
U(h)(O_d)$  is the image  of the  pure braid  group under  the reduced
Gassner representation.   Suppose that  $\Gamma _n$ contains  a finite
index  subgroup of the  Heisenberg group  which can  be viewed  as the
integral  unipotent  radical  of  a {\rm  (}maximal{\rm  )}  parabolic
$K_d$-subgroup  of $U(h)$ and  that $\Gamma$  acts irreducibly  on the
reduced Gassner representation {\rm (}i.e.  suppose that $t_1t_2\cdots
t_{n+1}\neq  1${\rm  )}.   Then,  the Zariski  closure  of  $\Gamma_n$
contains the special unitary group $SU(h)$.
\end{lemma}

\begin{proof} We  will use Proposition  \ref{Uzariskidense} of section
\ref{alggroups}.   The irreducibility  of  the action  of $\Gamma  _n$
implies  that the  Zariski  closure (intersected  with $SL_n(\C)$)  is
reductive.  Since $\Gamma  _n$ is  assumed to  contain a  finite index
subgroup  of the  group of  integral points  of the  unipotent radical
given by the  Heisenberg group, it follows that if  $H$ is the Zariski
closure of $\Gamma _n$ then $H$ contains $U$, the group of the form in
Proposition \ref{Uzariskidense}.  Then Proposition \ref{Uzariskidense}
implies the Lemma.
\end{proof}

We  continue with the  proof of  Proposition \ref{opposite}.  By Lemma
\ref{zariskidense},  it follows  that $\Gamma  _p$ intersects  the big
Bruhat cell (the  Zariski open set $U\kappa P$ where  $\kappa $ is the
longest  Weyl group element  above). Let  $\gamma $  lie in  the image
$\Gamma _p$  and also  in the big  Bruhat cell.  Then  $\gamma P\gamma
^{-1}=P'$ is opposite to $P$  and hence $\gamma U_0 \gamma ^{-1}$ lies
in  $\Gamma  _p$.  Therefore,   the  first  part  of  the  proposition
follows. \\

To  prove the  second part,  note that  $U(h)(O_d)$ is  a  higher rank
lattice containing $\Gamma$.  By part 1 of the  proposition, $\Gamma $
contains a  finite-index subgroup of integral points  of the unipotent
radical  of  a parabolic  $K$-subgroup.  By Lemma  \ref{zariskidense},
$\Gamma $ is Zariski dense.  Hence by Theorem \ref{bamise}, the second
part follows.
\end{proof}

\subsection{Proof of Theorem  \ref{purebraidmainth}}

Recall the notation: let $k_1,  k_2, \cdots, k_{n+1}$ be integers with
$1\leq k_i  \leq d-1$  and co-prime to  $d$. Let $\omega  _d=e^{2\pi i
/d}$  be  a primitive  $d$-th  root  of  unity; write  $t_i=\omega  _d
^{k_i}$. Let  $\Q(\omega _d)$ be  the $d$-th cyclotomic  extension and
$A_d$  the ring  of integers  in $E_d$;  denote by  $K_d$  the maximal
totally real sub-field  $\Q (2{\rm cos}(2\pi /d))$ of  $E_d$ and $O_d$
the  ring of  integers  in  $K_d$. Denote  by  $g_n(k,d): P_{n+1}  \ra
U(h)(O_d)\subset   GL_n(A_d)$  the   reduced   Gassner  representation
specialised at $X_i\mapsto t_i$. In this section, for ease of notation
(since we use induction and have  to deal with many indices) we denote
by $V_n$ the $E_d$ vector  space spanned by $\e_1, \cdots, \e_n$ (this
is the same as $W_n(k,d)$). \\

Suppose $s\in  B_{n+1}$ is an element  of the {\it  full} braid group,
whose image  in $S_{n+1}$ is  the permutation $\sigma$. Now  $\sigma $
operates  on  the  Laurent  polynomial ring  $\Z[X_1^{\pm  1},  \cdots
X_{n+1}^{\pm1}]$ by permutations of the $X_i$'s.  Given $s$, denote by
$u$ the $n\times n$ matrix $(u_{ij})$ such that
\[\sigma (\e_i)=  \sum _{j=1}^n u_{ji} \e_j. \]  Recall that $B_{n+1}$
operates on the Gassner module  $W\otimes \Omega $ by automorphisms of
the abelian  group , {\it but  not linearly} over  $\Omega$. Let $s\in
B_{n+1}$, $\sigma  $ its image  in $S_{n+1}$.  It easily  follows from
the construction  of the Gassner  representation that, for all  $g$ in
the pure braid group $P_{n+1}$, we have the equality
\[g(n,X)(sgs^{-1})= \sigma (u) \sigma (g(n,X)(g) \sigma (u) ^{-1}.\]
In particular, the image of $g(n,X)$  is a conjugate, by an element in
$GL_n(R)$,  of  the  image   of  the  twisted  representation  $\sigma
(g(n,X))=g(n,\sigma (X))$.   Therefore, if we  wish to prove  that the
specialisation of $g(n,X)$ at some roots of unity is arithmetic, it is
enough  to  prove  it   for  the  ``twisted''  representation  $\sigma
(g(n,X))$  (the  notation  $\sigma  (g(n,X))$ means  that  the  matrix
entries  of $g(n,X)$  which  are elements  of  $R$ acted  upon by  the
permutation $\sigma$ of the variables $X_i$). \\

We now begin the proof of Theorem \ref{purebraidmainth}.

\begin{proof} We first  prove that there exist two  elements $v,v' \in
E_d^n$  which are  linearly independent  and mutually  orthogonal with
respect to the hermitian form. \\

Consider the  $d$ numbers $t_1, t_1t_2,  \cdots, t_1t_2\cdots t_{d-1}$
and $t_1t_2\cdots t_d$.  These are $d$ elements of  the group $\mu _d$
of $d$-th roots  of unity . Therefore, by  the pigeon- hole principle,
we have  two possibilities. (a) One  of these products is  one, or (b)
Two of them coincide. \\

Hence there exists a  subset $I\subset\{ 1,2, \cdots, d-1, d
\}$ consisting of $l$ consecutive  integers such that $\prod _{i\in I}
t_i=1$.  Let $I=\{a+1,  \cdots a+l\}$. Put 
\[v= \sum  _{i=1}^ {l-1} (1-t_{a+1}\cdots t_{a+i}) \e  _{a+i}.\] By an
earlier  computation, $v$  is  orthogonal to  all  the $\e_{a+i}$  with
$1\leq i  \leq a+l$. Note  that in the  expression of $v$ as  a linear
combination of  the $\e_i$, the ``last'' basis  vector $\e_{a+l}$ does
not appear.  \\

Similarly,  there exists  a  subset $J\subset  \{  d+1, \cdots,  2d\}$
consisting  of $m$  consecutive  numbers such  that  $\prod _{j\in  J}
t_j=1$;  let  $J=\{b+1,  \cdots  b+m\}$,  where $b\geq  l+1$. As before, 
\[v' =  \sum _{j=1}^ {m-1}  (1- t_{b+1}\cdots t_{b+j}) \e  _{b+j},\] is
isotropic and is  orthogonal to all the $\e_{b+j}$  with $1\leq j \leq
m$.  Since the  indices of the $\e_{\mu}$ occurring in  $v$ are of the
form  $\mu  =a+i$  with  $i\leq   l-1$,  it  follows  that  $a+i  \leq
d-1$.  Since the  indices of  $\e _{\nu}=  \e_{b+j}$ occurring  in the
expression for  $v'$ are of the  form $\nu =b+j \geq  d+1$, it follows
that $\nu -\mu \geq 2$. Therefore, $v,v'$ are orthogonal. \\

Therefore  the $E_d$-rank  of  the span  of  the vectors
$\e_{a+1}, \cdots , \e_{a+l}$ and $ \e_{b+1}, \cdots, \e_{b+m}$ is at
least two. \\

By  the  remarks preceding  the  beginning  of  the proof  of  Theorem
\ref{purebraidmainth},  we may assume  that $a=0$  and $b=l$,  after a
permutation  $\sigma$ of the  indices, so  that $t_1\cdots  t_l=1$ and
$t_{l+1}\cdots         t_{l+m}=1$.         Therefore        $t_1\cdots
t_{l+m+1}=t_{l+m+1}\neq 1$ and $V_n$ is non-degenerate if $n=l+m$.  By
Proposition \ref{opposite},  the group $\Gamma  _{l+m}$ intersects two
opposite  integral  unipotent  radicals  $U_P^{+}$  and  $U_P^{-}$  in
subgroups  of  finite  index.   By  the conclusion  of  the  preceding
paragraph we  get that for $n=l+m$,  the group $U(h)$  has $K$-rank at
least  two, and  therefore the  group  $\Gamma _n$  is arithmetic,  by
Theorem \ref{bamise}.  \\

If the Theorem is true for  {\it some} $n\geq l+m$, then we will prove
that it is true for $n+1$. There are several cases to consider. \\

Let  $V=V_n$ (resp.  $V_{n+1}$)  be  the span  of  $\e_1, \cdots  ,\e_n$
(resp. $\e_1,  \cdots, \e_{n+1}$).  Let $V_n  '$ be the  span of $\e_2,
\cdots, \e_{n+1}$.  Then, the  intersection $V_n\cap V_n'$ is the span
of  $\e_2,  \cdots,  \e_n$.  Therefore,  $V_n\cap V_n  '$  contains  a
subspace  $W''$  which is  non-degenerate  and  contains an  isotropic
vector (e.g.  take $W''$  to be the  span of $\e_l,  \e_{l+1}, \cdots,
\e_{l+m+1}$). \\

{\it Case 1.} Assume that $V_n,V_n ', V_{n+1}$ are all non-degenerate.
Since $n\geq l+m$  the $K$-rank of $U(h_{n+1})$ and  $U(h_n)$ are both
$\geq 2$. By induction  assumption, $U(V_n)\cap \Gamma $ is arithmetic
and  $U(V_n')\cap  \Gamma$  is  arithmetic. By  Lemma  \ref{inductive},
$U(h_{n+1})$ is also arithmetic. \\

{\it Case 2.} $V_{n+1}$ is non-degenerate but $V_n$ is degenerate. Then
by   Proposition   \ref{opposite},   $U(h_{n+1})\cap   \Gamma   $   is
arithmetic.  Similarly, if  $V_{n+1}$  non-degenerate but  $V_n '$  is
degenerate, $\Gamma _{n+1}$ can be proved to be arithmetic. \\

{\it Case 3.}  $V_{n+1}$ is  degenerate. Then $V_{n+1}$ contains a one
dimensional null space $E_dv$ and by induction, the image of $\Gamma $
in $U(V_{n+1}/Av)$ is arithmetic.  However, by part [3] of Proposition
\ref{unipotentradical},  the group $\Gamma  $ intersects  the integral
unipotent  radical  of  $U(V_{n+1})$   in  a  finite  index  subgroup.
Therefore, $\Gamma $ contains a  finite index subgroup of the integral
unipotent radical of  $U(h)$ and maps onto a  finite index subgroup of
the reductive Levi part of  $U(h)(O_d)$. Hence $\Gamma $ is a subgroup
of finite index. \\

If $n\geq 2d$, then in the  above notation, $n\geq l+m$. Now the three
induction  steps proved  above imply  that  the group  $\Gamma _n$  is
arithmetic. 
\end{proof}

\section{Homology of Cyclic Coverings} \label{cycliccoverings}

In this section, we will view the first $\Q$-homology of certain index
$d$ subgroups of the free  group on $n+1$ generators, with essentially
a direct sum of the reduced Gassner representation evaluated at $d$-th
roots of unity. The proof is a little indirect, since it does not seem
possible to get a natural basis of the homology of index $d$-subgroup,
which  generates  the  relevant  Gassner representation.  Instead,  we
replace  the free  group $F_{n+1}$  on  $n+1$ genetators  with a  free
product of $F_{n+1}$  and an auxilliary $\Z$ (the  free product is then
the free group  on $n+2$ generators).  It seems  easier to identify the
homology  of   a  finite   index  subgroup  of   the  latter,   with  a
specialisation of the Gassner representation. 

\subsection{Image of homology} \label{homologyimage} 
Let  $k_1, k_2, \cdots, k_{n+1}$  be integers co-prime
to $d$ with $1\leq k_i \leq d-1$. Write $k=(k_1,k_2, \cdots k_{n+1})$.
We get  a homomorphism from the  free group $F_{n+1}\ra  \Z/ d\Z$ (the
latter written multiplicatively  as $q^{\Z}/q^{d\Z}$) by sending $x_i$
to  the element  $q^{k_i}$.  Denote  by  $K(k,d)$ the  kernel to  this
map. We have thus an exact sequence
\[1 \ra K_0(k,d)  \ra F_{n+1} \ra \Z/d\Z \ra 1.\]  Being a subgroup of
finite index (in fact of  index $d$) in $F_{n+1}$, the group $K_0(k,d)$
is also free on $n_0'$ generators, with
\[(1-n_0')= d(1-(n+1)),~~{\rm i.e.}~~ n_0'=1+nd.\] Since the $k_i$ are
co-prime to  $d$, the  image of $x_i$  generates $\Z/d\Z$;  hence there
exists an element  $\xi \in F_{n+1}$ such that its  image is $q$. Then
the  elements  $z_i=x_i\xi  ^{-k_i}$  lie in  the  kernel  $K_0(k,d)$;
moreover,  the elements  $\xi  $  and $\{z_i  :~~1\leq  i \leq  n+1\}$
generate $F_{n+1}$.  Hence the $z_i$  and $\xi ^d$ generate the kernel
$K_0(k,d)$  as a normal  subgroup of  $F_{n+1}$: if  we go  modulo the
normal subgroup  generated by the  $z_i$, then the resulting  group is
generated by the image of $\xi$ and maps onto $\Z/d\Z$. \\

The  first homology  group  $V_0= K_0(k,d)^{ab}\otimes  \Q$ with  $\Q$
coefficients of  $K_0(k,d)$ is therefore  a vector space  of dimension
$n_0'=1+nd$ over $\Q$.  We have  already seen that this homology group
is a  module over the quotient  group $\Z/d\Z$, and is  hence a module
over the group ring $\Q[q]/(q^d-1)$.  As a $\Q$ vector space, $V_0$ is
generated  by the  elements  $\xi ^j  (z_i):~(2\leq  i \leq  n+1~~{\rm
and}~~0\leq  j\leq  d-1)$  and  the  element $\xi  ^d$.   Hence  these
elements form a basis of the first homology group over $\Q$.  \\

We first find the invariants $V_0^G$  in $V_0$ under the action of the
group  $G=\Z/d\Z$. Since the  image $q$  of $\xi  $ generates  $G$, it
follows  that  $V_0=(V_0/(1+q+\cdots+q^{d-1}))\oplus  V_0^G$. In  this
decomposition, the element  $q$ may be replaced by  $q^{k_i}$ for any
$i$ since $k_i$ is coprime to $d$. \\

We know that $\xi ^d \in V_0$ is invariant. Moreover, $x_i^d\in V_0$ is
invariant under  conjugation by $x_i$;  but the conjugation  action of
$F_{n+1}$ on  $V_0$ descends to that  of $G$ and  each $x_i$ generates
$G$. Therefore,  $x_i^d$ is invariant  under $G$. We write  $x_i^d$ in
terms of the $x_j=z_i \xi ^{k_i}$:
\[x_i ^d=  z_i(1+q^{k_i}+\cdots +  q^{k_i(d-1)}).\] As an  operator on
$V_0$    multiplication   by    the    element   $M_i=1+q^{k_i}+\cdots
+q^{(d-1)k_i}$    is    zero   on    non-invariants    and   $d$    on
invariants. Therefore, $M_i$ is independent of $k_i$ and hence $x _i ^d =
z_i(1+q+\cdots +q^{d-1})$. In other words the $\Q$-span of the $x_i^d$ is
the space of all invariants $V_0^G$:
 \begin{equation} \label{K_0invariants}  V_0^G= \sum _{i=1}^  {n+1} \Q
[x_i ^d] .  \end{equation}

Consider the  quotient $V_0^{ni} =  V_0/(1+q+\cdots q^{d-1})V_0$ ($ni$
stands for non-invariants).   This is a module over  the quotient ring
$\Q[q]/(1+q+\cdots  q^{d-1})$.   By the discussion  of the  preceding
paragraphs, the first homology  $V_0$ is generated as a $\Q[q]/(q^d-1)$
module by  (the images  in the abelianisation  of $K_0(k,d)$ of  ) the
elements $z_i$ and  by $\xi ^d$.  Since $\xi $  commutes with $\xi ^d$
it follows that  in the homology group, $q(\xi  ^d)=\xi ^d$; hence the
augmentation  ideal  of  the  group ring  $\Q[q]/(q^d-1)$  kills  $\xi
^d$. Therefore we  see that the $z_i$ form a basis  of the free module
$V_0^{ni}$ as a module over the ring $R_d= \Q[q]/(1+q+ \cdots q^{d-1})$:
\[V_0^{ni}=R_d^n.\]

We   also  have   the   free  group   on   $n+2$  generators   written
$F_{n+1}*t^{\Z}$, with a natural  inclusion of $F_{n+1}$ in $F_{n+2}$.
On $F_{n+2}$ we have a  homomorphism into $\Z/d\Z$ by sending $x_i$ to
$q^{k_i}$ and  $t$ to the  standard generator $q$. Denote  by $K(k,d)$
the kernel to this map; we have a short exact sequence
\[1  \ra K(k,d)  \ra  F_{n+1}*t^{\Z}\ra \Z/d\Z  \ra  1.\] For  similar
reasons, $K(k,d)$  is free on $n'$  generators where $n'$  is given by
the  formula   $n'=1+(n+1)d$.  Thus  the  first   homology  group  $V=
H_1(k,d)^{ab}\otimes \Q$  has dimension $(n+1)d+1$ over $\Q$  and is a
module  over the  group ring  $\Q[q]/(q^d-1)$.   If we  go modulo  the
action  of  the  linear  transformation  $(1+q+\cdots  +q^{d-1})$  the
resulting  quotient  of the  homology  group  of  $K(k,d)$ is  denoted
$V^{ni}$. Then, as before,
\[V^{ni}= R _d ^{n+1},\] is a free module over the ring $R_d$. \\

The   natural   inclusion   of   $F_{n+1}$   in   the   free   product
$F_{n+1}*t^{\Z}$ induces  a map $\phi$  of the homology groups  of the
kernels  $K_0(k,d)$ and $K(k,d)$,  which is  also equivariant  for the
action  of the  group ring  $\Z[\Z/d\Z]$. Thus  we get  maps  of $R_d$
modules $\phi ^{ni} : V_0^{ni}\simeq R_d^n\ra V^{ni}\simeq R_d^{n+1}$.
Since,  as $\Q$-vector  spaces, the  image of  $\phi :V_0\ra  V  $ has
codimension $\geq  $ $(n+1)d-nd=d$, it  follows that (at the  level of
``non-invariants'')   the  image   of  $V_0^{ni}$   in   $V^{ni}$  has
codimension $\geq d-1$. \\

\subsection{Image of the Gassner Module}

The pure braid group $P_{n+1}$  acts on $F_{n+1}$, and acts trivially
on  the abelian  quotient $\Z/d\Z$.   We  take the  trivial action  of
$P_{n+1}$   on  $t^{\Z}$   and   get  an   action   of  $P_{n+1}$   on
$F_{n+1}*t^{\Z}$.   Hence  $P_{n+1}$   acts  on   $K_0(k,d)$   and  on
$K(k,d)$. The map $K_0(k,d) \ra K(k,d)$ is equivariant for this action
as well, and the action of $P_{n+1}$ on the modules $K_0(k,d)^{ab}$and
$K(k,d)^{ab}$  is equivariant  for  the action  of  the product  group
$\Z/d\Z\times P_{n+1}$.\\

We had the exact sequence 
\[1\ra  K \ra  F_{n+1}*  F_{n+1}^{ab} \ra  F_{n+1}^{ab}  \ra 1,\]  and
realised   $K^{ab}$  as   a  module   over  the   group   algebra  $R=
\Z[F_{n+1}^{ab}]$.   By    replacing   $R$   by    the   larger   ring
$R'=R[\frac{1}{(1-X_1)\cdots (1-X_{n+1})}]$,  we found the  direct sum
decomposition
\[K^{ab}\otimes_R  R'\simeq W\otimes  R' \oplus  R'v_{n+1},\]  of $R'$
modules ({\it  but not of $P_{n+1}$ modules}).  In this decomposition,
$W$  was  the   reduced  Gassner  representation  $g_n(X):  P_{n+1}\ra
GL_n(R)$.   We   now  get  a  map   from  $F_{n+1}*F_{n+1}^{ab}$  onto
$F_{n+1}*t^{\Z}$ given by $x_i\mapsto x_i\in F_{n+1}$ and $X_i \mapsto
t^{k_i}$.  This induces a  map from  the abelianised  kernels $K^{ab}$
into $K(k,d)^{ab}$,  with the image of $K^{ab}$  being precisely the
specialisation  $X_i \mapsto t^{k_i}$;  in other  words, the  image of
$K^{ab}$ is the Gassner  representation evaluated at $(t^{k_1}, \cdots
,t^{k_{n+1}})$.  It follows  that if  $v$ is  the invariant  vector in
$K^{ab}$ then
\[v=(1-X_1)\e_1+  \cdots   +  (1-X_1X_2\cdots  X_n)\e_n+  (1-X_1\cdots
X_{n+1})v_{n+1}. \] Then its image in $K(k,d)^{ab}$ is also invariant;
moreover, if $t_1\cdots  t_{n+1}=1$ then the image of  $v$ lies in the
image of  $W$, namely it  is the image  of the element  $\sum _{i=1}^n
(1-t_1\cdots t_i)\e_i$.  \\

The polynomials $1-q^{k_i}$ and $1+q+\cdots q^{d-1}=(1-q^d)/(1-q)$ are
coprime  since $k_i$ and  $d$ are  coprime. Therefore,  $1-q^{k_i}$ is
invertible   in  the   ring   $R_d=\Q[q]/(1+q+\cdots  q^{d-1})$,   and
therefore,   the   map  $R\mapsto   \Q[q]/(q^d-1)$   induces  a   ring
homomorphism  $R'\ra  R_d$  (recall  that $R=\Z[X_1^{\pm  1},  \cdots,
X_{n+1}^{\pm 1}]$ and that $R'$  is obtained from $R$ by inverting all
the elements  $1-X_i$). Hence $V^{ni}= H_1(K(k,d)^{ab},\Q)/(1+q+\cdots
q^{d-1})$ is naturally a module over $R'$.  Therefore, $V^{ni}$ is the
(full)   Gassner  representation   evaluated  at   $(q^{k_1},  \cdots,
q^{k_{n+1}})$.  The  module $V^{ni}$ contains image  of the sub-module
$W\otimes R'$, under the map $R' \mapsto R_d$. However, $W$ is spanned
by the ``commutator'' elements
\[\e_i=\frac{1}{(1-X_i)(1-X_{i+1})}[x_i,x_{i+1}].\] Denote by $\e_i '$
the image of $\e_i$ in $V_0$.  Therefore, $\e_i '$ lie in the image of
the  abelianised kernel $K_0(k,d)^{ab}\otimes  _R R'$  (the commutator
$[x_i,x_{i+1}]$  certainly lies in  the image  of $K_0(k,d)$;  but the
inverted elements  of $R'$  map into elements  which have  inverses in
$R_d$  and  hence $\e_i'$  lies  in  the  image tensored  with  $R'$).
Consequently,  the   image  of  $V_0^{ni}=   K_0(k,d)\otimes  \Q$  has
codimension $\leq d-1$:
\[ V^{ni}/ (image (W\otimes R') =Rv/ (1+q+\cdots+q^{d-1}), \]
and the latter space has dimension $\leq d-1$. \\

The  conclusion of  the preceding  subsection  \ref{homologyimage} now
implies that the module  $V_0^{ni}$ maps injectively into $V^{ni}$ and
that the  image is precisely the  image of $W\otimes  R'$, the reduced
Gassner representation evaluated  at $(q^{k_1}, \cdots, q^{k_{n+1}})$.
We have proved the following

\begin{theorem}   \label{K_0homology}   The  representation   of
$P_{n+1}$ on the homology group
\[V_0^{ni}=(K_0(d,k)^{ab}\otimes    \Q)/(1+q+\cdots   q^{d-1}),\]   is
isomorphic  to  the  reduced  Gassner  representation  specialised  at
$X_i\mapsto q^{k_i}$, and as a module over $R_d$ it is the free module
$R_d^n$  {\rm (}where  $R_d$ is  the quotient  ring $\Q[q]/(1+q+\cdots
+q^{d-1})${\rm )}. We have therefore, the decomposition
\[V_0=\bigoplus _{e\mid d, ~~e\geq 2}g_n(k,e),\] of the (non-invariant
part of  the ) homology  of $K_0(k,d)$ as  a sum of the  {\it reduced}
Gassner representations $g_n(k,e)$.
\end{theorem}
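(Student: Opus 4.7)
The plan is to package together the two codimension estimates already established in the preceding two subsections to pin down the image of $V_0^{ni}$ exactly, and then apply the Chinese Remainder Theorem to the ring $R_d$ to get the stated direct sum decomposition.

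First, I would combine the bounds. In subsection \ref{homologyimage}, a rank count for the free groups $K_0(k,d)$ and $K(k,d)$ gave that the image of $\phi^{ni}\colon V_0^{ni}\to V^{ni}$ has codimension at least $d-1$. In the subsection on the image of the Gassner module, the explicit elements $\e_i'=[x_i,x_{i+1}]/((1-X_i)(1-X_{i+1}))$ (which make sense in $V_0^{ni}$ after inverting the $1-X_i$ in $R'$, which is legitimate because each $1-q^{k_i}$ is a unit in $R_d$) exhibit $V_0^{ni}$ as containing the image of $W\otimes R'$, and the complement $V^{ni}/\text{image}(W\otimes R')$ was shown to have dimension at most $d-1$. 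Putting these two inequalities together forces $\phi^{ni}$ to be injective and its image to coincide exactly with the specialization of $W\otimes R'$ at $X_i\mapsto q^{k_i}$. This identifies $V_0^{ni}$, as a $P_{n+1}$-module and as an $R_d$-module, with the reduced Gassner representation $W\otimes R_d$; since $W=\bigoplus_{i=1}^n R\,\e_i$ is $R$-free of rank $n$, the specialization is free of rank $n$ over $R_d$.

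Next, I would invoke the factorization $1+q+\cdots+q^{d-1}=(q^d-1)/(q-1)=\prod_{e\mid d,\;e\geq 2}\Phi_e(q)$ of cyclotomic polynomials, so that by the Chinese Remainder Theorem
\[
R_d=\Q[q]/(1+q+\cdots+q^{d-1})\;\cong\;\prod_{e\mid d,\;e\geq 2}\Q[q]/\Phi_e(q)\;=\;\prod_{e\mid d,\;e\geq 2}E_e,
\]
where $E_e=\Q(\omega_e)$ is the $e$-th cyclotomic field. Tensoring the identification $V_0^{ni}\simeq R_d^n$ with each factor $E_e$ produces a $P_{n+1}$-stable summand isomorphic to $E_e^n$, on which the representation is, by construction, the reduced Gassner representation specialized at $X_i\mapsto \omega_e^{k_i}$; by definition this is $g_n(k,e)$. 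Taking the direct sum over divisors $e\geq 2$ of $d$ gives
\[
V_0^{ni}\;\simeq\;\bigoplus_{e\mid d,\;e\geq 2} g_n(k,e),
\]
which is the asserted decomposition.

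I expect the only delicate point to be the bookkeeping that the injection $V_0^{ni}\hookrightarrow V^{ni}$ obtained from abstract rank counts really matches the map arising from the Gassner module: one must check that the ``commutator'' vectors $\e_i'$ defined via $R'$ land in the $\Q[\Z/d\Z]$-span described combinatorially in subsection \ref{homologyimage}, and that the $P_{n+1}$-action on them transports from the action on $W\otimes R$ to the conjugation action on $K_0(k,d)^{ab}$. Both are formal once one notes that the map $F_{n+1}*F_{n+1}^{ab}\twoheadrightarrow F_{n+1}*t^{\Z}$ is $P_{n+1}$-equivariant (with $t$ fixed) and that $\e_i'$ lies in the image of $K_0(k,d)^{ab}\otimes_R R'$ because $[x_i,x_{i+1}]$ already lies in $K_0(k,d)$; the rest is carried by the identifications set up in Sections \ref{gassnersubsection} and \ref{supplement}.
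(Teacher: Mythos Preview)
Your proposal is correct and follows essentially the same approach as the paper: the paper proves the theorem in the paragraphs immediately preceding its statement by combining the codimension $\geq d-1$ bound from subsection \ref{homologyimage} with the codimension $\leq d-1$ bound from the Gassner-module subsection, together with the containment of the image of $W\otimes R'$ in the image of $V_0^{ni}$, to force equality and injectivity. Your explicit invocation of the Chinese Remainder Theorem to decompose $R_d\cong\prod_{e\mid d,\,e\geq 2}E_e$ makes the final direct-sum statement more transparent than the paper's compressed formulation, but the underlying argument is the same.
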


Denote by $Q_0(k,d)$ the quotient  of the free group $K_0(k,d)$ by the
smallest  subgroup  $N$ normalised  by  $F_{n+1}$  and containing  the
``unipotent''  elements $x_1^d,  x_2  ^d, \cdots,  x_{n+1}^d$ and  the
element $(x_1x_2\cdots  x_{n+1})^{d/r}$, where  $r$ is the  g.c.d.  of
the sum $k_1+k_2+\cdots+k_{n+1}$ and the number $d$ (If we view $F$ as
the fundamental group of the  punctures Riemann surface, then $F$ is a
subgroup of  $SL_2(\R)$ and the  loops $x_i$ around the  punctures are
unipotent elements  in $SL_2(\R)$; this  is the reason we  have called
the $x_i$ unipotent  elements. We do not use the fact  that $F$ may be
viewed as  a subgroup of  $SL_2(\R)$). The quotient map  $K_0(k,d) \ra
Q_0(k,d)$ induces a corresponding map on the $\Q$-homology:
\[\phi: V_0=  H_1(K_0(k,d),\Q) \ra X_0=H_1(Q_0(k,d)),\Q).\]  Since the
kernel  $N$ to  the  quotient  map is,  by  assumption, normalised  by
$F_{n+1}$, it  follows that the foregoing  map $\phi $  on homology is
equivariant for the action of $\Z/d\Z$; therefore, $\phi $ is a map of
$\Q[q]/(q^d-1)$ modules. Correspondingly, we get a map
\[\phi:     V_0^{ni}\ra     X_0^{ni}=     H_1(Q_0(k,d),\Q)/(1+q+\cdots
+q^{d-1}).\] The vector  space $X_0$ does not have  any invariants for
the group $\Z/d\Z$,  because invariants in the quotients  $X_0$ are in
the  image of  invariants in  $V_0$ ($G$  is a  finite group  and the
modules    are   $\Q$-vector    spaces).    Secondly,   by    equation
\ref{K_0invariants},  the only  invariants in  $V_0= H_1(K_0(k,d),\Q)$
are the span of the ``unipotent'' classes $[x_1^d], \cdots, [x_{n+1}^d]$
which   lie    in   the   kernel   of    the   map   $K_0(k,d)^{ab}\ra
Q_0(k,d)^{ab}$. Therefore, $X_0^{ni}=X_0$. \\

The action of the group $P_{n+1}$  on the free group $F_{n+1}$ is such
that each generator $x_i$ of $F_{n+1}$ goes into a conjugate of itself
(see subsection  (\ref{artinfree})).  Moreover, the product element
$x_1\cdots x_{n+1}$  is invariant under  all of $B_{n+1}$.   Hence the
normal subgroup $N$ is stable under the action of the pure braid group
$P_{n+1}$, and therefore, the  homology group $H_1(Q_0(k,d), \Q)$ is a
$P_{n+1}$ module and  the map $\phi$ is equivariant  for the action of
$P_{n+1}$  as  well.  We  have  the  following  corollary  of  Theorem
\ref{K_0homology}.

\begin{corollary} \label{Q_0homology}  The representation of $P_{n+1}$
on the quotient $X_0$ is a direct sum
\[X_0\simeq \bigoplus  _{e \mid  d,~~e\geq 2} {\overline  g_n(k,d)},\] of
the   quotients  ${\overline   g_n(k,d)}$  of   the   reduced  Gassner
representations by the {\rm (} possibly one dimensional{\rm )} 
space of invariants
\end{corollary}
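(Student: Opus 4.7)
The plan is to identify the kernel of the surjection $\phi^{ni}\colon V_0^{ni}\twoheadrightarrow X_0$ with the sum of the one-dimensional $P_{n+1}$-invariant lines in the Gassner summands provided by Theorem \ref{K_0homology}.

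The paragraph preceding the corollary already establishes $X_0^{ni}=X_0$ and reduces $\phi$ to a surjection $\phi^{ni}$. Its kernel is the image in $V_0^{ni}$ of the $F_{n+1}$-normal closure $N$, which is generated by $\{x_i^d\}$ together with $y=(x_1x_2\cdots x_{n+1})^{d/r}$. The classes $[x_i^d]$ lie in $V_0^G$ by equation \ref{K_0invariants} and therefore vanish in $V_0^{ni}$, so $\ker\phi^{ni}$ is the $\Q[\Z/d\Z]$-submodule of $V_0^{ni}$ generated by the class $\bar y$ of $y$.

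Next I would compute $\bar y$ under the identification of $V_0^{ni}$ with $\bigoplus_{e\mid d,\, e\geq 2} g_n(k,e)$. Using the semidirect product computations of subsections \ref{gassnersubsection} and \ref{supplement}, the element $x_1x_2\cdots x_{n+1}$ has image $(v,X_1\cdots X_{n+1})$ in $K^{ab}\rtimes F_{n+1}^{ab}$ with $v=\sum_{i=1}^{n+1}X_1\cdots X_{i-1}e_i$, and raising to the $(d/r)$-th power yields
\[
(x_1\cdots x_{n+1})^{d/r}\ \longmapsto\ \Bigl(\bigl(1+T+T^2+\cdots+T^{d/r-1}\bigr)v,\ T^{d/r}\Bigr),\qquad T=X_1X_2\cdots X_{n+1}.
\]
Specializing $X_i\mapsto \omega_e^{k_i}$ and writing $T_e=\omega_e^{\sum k_i}$, we have $T_e^{d/r}=\omega_e^{ds}=1$ and, since $\sum k_i=rs$ with $\gcd(s,d/r)=1$, $T_e=1$ iff $e\mid r$. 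Hence the geometric sum vanishes when $e\nmid r$, while for $e\mid r$ it equals $d/r\ne 0$ and $v$ specializes to the invariant vector $v_e=\sum_{i=1}^n(1-\omega_e^{k_1+\cdots+k_i})\e_i$ of $g_n(k,e)$ identified in Lemma \ref{gassnerirreducible}.

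Therefore the $\Q[\Z/d\Z]$-span of $\bar y$ equals $\bigoplus_{e\mid r,\,e\geq 2}\Q(\omega_e)\cdot v_e$, since $q$ acts on the $e$-component as $\omega_e$ and $\Q[\omega_e]=\Q(\omega_e)$. By Lemma \ref{gassnerirreducible} this is precisely the full $P_{n+1}$-invariant subspace of $V_0^{ni}$, and the quotient is $\bigoplus_{e\mid d,\,e\geq 2}\overline{g_n(k,e)}$ (with the convention $\overline{g_n(k,e)}=g_n(k,e)$ when $e\nmid r$), proving the corollary. The main technical step is the explicit chase of $(x_1\cdots x_{n+1})^{d/r}$ through the semidirect products and the identification of Theorem \ref{K_0homology}; once that is set up, the vanishing/non-vanishing trichotomy follows immediately from the geometric sum $\sum_{j=0}^{d/r-1}T_e^j$.
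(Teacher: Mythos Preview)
Your proposal is correct and follows essentially the same approach as the paper: both reduce the kernel of $V_0^{ni}\to X_0$ to the $\Q[q]$-module generated by the class of $(x_1\cdots x_{n+1})^{d/r}$, compute this class as the geometric sum $(1+\pi+\cdots+\pi^{d/r-1})v$ with $\pi=t_1\cdots t_{n+1}$, and then observe that in the $e$-th Gassner summand the sum vanishes when $\pi\neq 1$ and equals $(d/r)v_e$ when $\pi=1$. Your phrasing in terms of the divisibility condition $e\mid r$ and the explicit verification $T_e^{d/r}=\omega_e^{ds}=1$ is slightly more explicit than the paper's, but the argument is the same.
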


\begin{proof} Since $X_0=X_0^{ni}$ it  follows that the elements $x_i^d
\in  V_0$ map  to zero  in $X_0$.  Hence the  quotient $X_0$  is $V_0$
modulo   the   $\Z[q]$   -   module  generated   by   $g=(x_1x_2\cdots
x_{n+1})^{d/r}$: the other elements $[x_i^d]$ in $V_0$ map to zero. \\

We now  deal with the element  $(x_1x_2\cdots x_{n+1})^{d/r}$.  Recall
that $r$  is the g.c.d.  of the integers  $d$ and $\sum  _{i=1} ^{n+1}
k_i$.   Then, the  element $g_{\infty}=  (x_1x_2\cdots x_{n+1})^{d/r}$
viewed   as   an   element  of   $F_{n+1}/K_{n+1}(d)^{(1)}$   (written
multiplicatively)  lies in  $K_{n+1}(d)^{ab}$.  Put $t_i=q^{k_i}$  and
$\pi  =t_1t_2\cdots  t_{n+1}$.   In  $K_{n+1}(k,d)^{ab}$  the  element
$g_{\infty}$ of the abelian group (written additively) is of the form
\[w=(x_1x_2\cdots     x_{n+1})^{d/r}=      v(1+\pi     +\cdots     \pi
^{d/r-1})=\lambda v.\]  In this formula, $v$ is  the invariant element
in $K^{ab}$ encountered before:
\[v=(1-t_1)\e_1+\cdots + (1-t_1t_2\cdots  t_{n+1})\e_n.\] We need only
check that  this element $g_{\infty}= \lambda  v$ goes to  zero in the
$e$-th component  of the decomposition  $H_1(K_0(k,d),\Q)\simeq \oplus
g_n(k,e)$, exactly when $g_n(k,e)$ has an invariant vector. \\

If  $g_n(k,e)$  has  an  invariant  vector,  then  the  element  $\pi=
t_1t_2\cdots  t_{n+1}$ of  $\Z[\Z/d\Z]$ maps  to $1$  in  the quotient
$\Z/e\Z$ of the group $G=\Z/d\Z$.   Hence the projection to the $e$-th
factor of $w$  is a non-zero scalar multiple of  the projection of $v$
since $\lambda =d/r \neq 0$ at the $e$-th place. \\

If $g(k,e)$ does not have an invariant vector, then $\pi =t_1t_2\cdots
t_{n+1}  $   is  not  1  in  $\Z/e\Z$;   hence  $\lambda  =\frac{1-\pi
^e}{1-\pi}= 0$  and our  element $g_{\infty}$ goes  to zero.   We have
therefore verified that corresponding to the decomposition
\[V_0=H_1(K_0(k,d),\Q)/(1+q+\cdots  +q^{d-1})\simeq  \bigoplus _{e\mid
d~e\geq  2} g_n(k,e)\]  of  representations of  the  pure braid  group
$P_{n+1}$, the quotient  $X_0=V_0^{ni}/\Q[G]g_{\infty}$ has a corresponding
decomposition
\[X_0=H_1(Q_0(k,d),\Q) \simeq \bigoplus _{e\mid d} \overline{g_n(k,e)}.\] 
as representations of the pure braid group $P_{n+1}$. 

\end{proof}

\begin{corollary} \label{homologyarithmetic}  If $n\geq 2d$,  then the
image  of the representation  of $P_{n+1}$  on the  space $W_0$  is an
arithmetic group.
\end{corollary}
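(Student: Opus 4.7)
The plan is to combine the decomposition provided by Corollary \ref{Q_0homology} with the arithmeticity theorem for each reduced Gassner representation at roots of unity (Theorem \ref{purebraidmainth}) and assemble the pieces using the product lemma (Lemma \ref{products}). By Corollary \ref{Q_0homology}, the $P_{n+1}$-representation on $X_0 = H_1(Q_0(k,d),\Q)$ splits as a direct sum
\[
\bigoplus_{e\mid d,\; e\geq 2} \overline{g_n(k,e)}
\]
of the quotients of the reduced Gassner representations at primitive $e$-th roots of unity. For each divisor $e\geq 3$ of $d$, the hypothesis $n\geq 2d\geq 2e$ lets us invoke Theorem \ref{purebraidmainth} directly: the image of $P_{n+1}$ under $g_n(k,e)$, and hence under the quotient $\overline{g_n(k,e)}$, is of finite index in the integral unitary group $U(h_e)(O_e)$ of the specialised skew-Hermitian form. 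The remaining divisor $e=2$ (which occurs only when $d$ is even) is handled by A'Campo's theorem cited in the remark following Theorem \ref{fullbraidmonodromy}.

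Next I would glue these factorwise arithmeticity statements into arithmeticity of the image on the whole direct sum by applying Lemma \ref{products}. Concretely, the image $\Gamma_0$ of $P_{n+1}$ on $X_0$ sits inside the product $\prod_{e\mid d,\,e\geq 2} U(h_e)(O_e)$, its projection to the $e$-th factor has finite index by the previous step, and each $SU(h_e)$ is an absolutely almost simple $K_e$-group with $\infty$-rank at least two (the rank-two statement is exactly the pigeon-hole output used inside the proof of Theorem \ref{purebraidmainth}, since $n\geq 2d\geq 2e$). To apply Lemma \ref{products} I must verify that for distinct $e,e'\mid d$ the pairs $(K_e, SU(h_e))$ and $(K_{e'}, SU(h_{e'}))$ are non-isomorphic; this follows because the totally real subfields $K_e=\Q(2\cos(2\pi/e))$ of distinct cyclotomic fields are non-isomorphic outside a handful of low-degree coincidences, which one handles by noting that in those coincidences the dimensions, or the signatures of the Hermitian forms at archimedean places, already distinguish the groups.

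The main obstacle is the last bookkeeping point — confirming the hypothesis of Lemma \ref{products} is genuinely met across all pairs of divisors $e\neq e'$ of $d$ simultaneously, rather than any nontrivial new geometric input, since the heavy lifting (production of unipotents, arithmeticity of each factor) has already been absorbed into Theorem \ref{purebraidmainth} and A'Campo's theorem. Once these routine compatibility checks are carried out, Lemma \ref{products} yields that $\Gamma_0$ has finite index in $\prod_{e\mid d,\,e\geq 2} U(h_e)(O_e)$, which is an arithmetic group, proving Corollary \ref{homologyarithmetic}.
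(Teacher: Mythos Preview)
Your proposal is correct and follows essentially the same approach as the paper: decompose via Corollary \ref{Q_0homology}, apply Theorem \ref{purebraidmainth} for $e\geq 3$ and A'Campo's theorem for $e=2$, then glue using Lemma \ref{products}. You are in fact slightly more careful than the paper's own proof, which invokes Lemma \ref{products} without explicitly discussing the non-isomorphism hypothesis on the pairs $(K_e, SU(h_e))$.
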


\begin{proof} For $e \geq 2$ dividing $d$, denote by $G_e$ the unitary
group  of  the  skew  hermitian  form ${\overline  h}$  on  the  space
${\overline g_n(k,e)}$  .  We have  seen from the  preceding corollary
that if  $\Gamma $ is  the image of  the action of $P_{n+1}$  on $X_0$
then $\Gamma$ is contained in the product $\prod _{e\mid d} G_e(O_e)$.
Suppose first  that $e\geq  3$. Then by  Theorem \ref{purebraidmainth}
(since $n\geq  2d\geq 2e$), the image  of $\Gamma $  in $G_e(O_e)$ has
finite    index.    If    $e=2$   then    the   method    of   Theorem
\ref{purebraidmainth} does not apply. However, for $e=2$, is is a well
known  theorem  of   \cite{A'C}  that  the  image  of   $\Gamma  $  in
$G_e(O_e)=Sp  _{2e}(\Z)$  has   finite  index.   Therefore,  by  Lemma
\ref{products}, $\Gamma $  has finite index in the  product $\prod _{e
\mid d} G_e(O_e)$.

\end{proof}

\section{Connection with Monodromy} \label{monodromysection}

\subsection{Some Cyclic Coverings of ${\mathbb P}^1$} 
\label{topologybraid}

Let $a_1,a_2, \cdots,a_{n+1}$ be distinct complex numbers; write $S_a$
for     the     complement     in     $\C$    of     these     points:
$S_a=\C\setminus\{a_1,a_2,\cdots,a_{n+1}\}$. The  fundamental group of
$S_a$, once  a base point is  chosen, may be identified  with the free
group on $F_{n+1}$  generated by small circles $x_i$  going around the
point $a_i$  counterclockwise once (and  joined to the  preferred base
point by an arc which avoids all the points $a_j$ and has zero winding
number around all the points $a_j$). The map $S_a\ra \C^*$ defined by
\[x\mapsto
(x-a_1)^{k_1}(x-a_2)^{k_2}\cdots(x-a_{n+1})^{k_{n+1}}=P_a(x),\]
induces a homomorphism $F_{n+1}\ra  q^{\Z}$, which sends each $x_i$ to
$q^{k_i}$.   Here, $q$  is a  small circle  around zero  in $\C^*$
which runs counterclockwise exactly once.\\

For  future reference,  note that  the loop  around infinity  lying in
$S_a$ represents  (the inverse  of) the product  element $x_1x_2\cdots
x_{n+1}$ and  that this element is  invariant under the  action of the
braid group $B_{n+1}$ on the free group $F_{n+1}$. \\

The affine  variety $\C^*={\mathbb G}_m$  admits a cyclic  covering of
order $d$  given by $z\mapsto  z^d$ from ${\mathbb G}_m$  to ${\mathbb
G}_m$.   The  covering  may  be  realised  as  the  space  $\{(x,y)\in
\C^*\times \C^*:y^d=x\}$ and the covering map is the first projection.
Pulling this covering back to $S_a$ we get a cyclic covering of $S_a$,
realised as the space
\[X_{a,k}=\{(x,y)\in              S_a\times     \C^* :
~~y^d=(x-a_1)^{k_1}(x-a_2)^{k_2}\cdots(x-a_{n+1})^{k_{n+1}}\},\]   with
the  first   projection  being  the  covering  map   from  $X_a$  onto
$S_a$. Therefore, under the identification of the fundamental group of
$S_a$ with $F_{n+1}$, the fundamental group of $X_{a,k}$ is identified
with $K_0(d,k)$. \\

As  the collection $a$  varies, we  get a  collection $\mathcal  P$ of
monic  polynomials $P_a$  of degree  $n+1$ which  have  distinct roots
$a_i$ occurring with given multiplicities $k_i$, and if ${\mathcal Q}$
denotes the variety
\[(w,x,P)\in \C^*\times  \C \times {\mathcal  P}: w= P(x),\]  then the
projection on to the third coordinate gives a fibration over $\mathcal
P$ with fibre at $P$ being  $S_a$ (here $a$ is the collection of roots
of $P$). We therefore get  a monodromy action of the fundamental group
of ${\mathcal P}$ on the  fundamental group $F_{n+1}$ of the fibre. We
have the following basic theorem of E. Artin.

\begin{theorem}  {\rm  (}  Artin   {\rm)}  The  fundamental  group  of
$\mathcal P$ is a subgroup of 
the Braid Group $B_{n+1}$ and contains the pure braid group $P_{n+1}$. 
The monodromy action of
$P_{n+1}$  on  $\pi  _1(S_a)\simeq  F_{n+1}$  is the  usual  action  of
$P_{n+1}$ on $F_{n+1}$ defined in section (\ref{artinfree}).
\end{theorem}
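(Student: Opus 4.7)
The plan is to identify $\mathcal{P}$ as an intermediate covering space between the ordered and unordered configuration spaces of $n+1$ distinct points in $\C$, and to deduce both statements from the classical topological identification of $B_{n+1}$ with the fundamental group of the unordered configuration space, together with the standard description of the monodromy on the punctured-plane fiber.

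First, I would consider the map $\Phi : \mathcal{C} \to \mathcal{P}$ given by $(a_1, \ldots, a_{n+1}) \mapsto P_a(x) = \prod_{i=1}^{n+1}(x-a_i)^{k_i}$, where $\mathcal{C}$ denotes the ordered configuration space of $(n+1)$-tuples of distinct complex numbers. Two tuples yield the same polynomial if and only if one is obtained from the other by a permutation $\sigma$ satisfying $k_{\sigma(i)} = k_i$ for all $i$, so $\Phi$ is a finite covering with deck group
\[S = \{\sigma \in S_{n+1} : k_{\sigma(i)} = k_i \text{ for all } i\}.\]
Since $\pi_1(\mathcal{C}) = P_{n+1}$, this gives a short exact sequence
\[1 \to P_{n+1} \to \pi_1(\mathcal{P}) \to S \to 1.\]

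Next I would factor $\Phi$ through the unordered configuration space: $\mathcal{C} \to \mathcal{P} \to \mathcal{C}/S_{n+1}$. Since $\pi_1(\mathcal{C}/S_{n+1}) = B_{n+1}$ (a classical theorem, cf.\ \cite{Bir}), functoriality produces an embedding $\pi_1(\mathcal{P}) \hookrightarrow B_{n+1}$ which identifies $\pi_1(\mathcal{P})$ with the preimage of $S$ under the natural surjection $B_{n+1} \to S_{n+1}$. In particular $P_{n+1} \subset \pi_1(\mathcal{P}) \subset B_{n+1}$, which is the first assertion.

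For the monodromy, the total space $\mathcal{Q}$ fibers over $\mathcal{C}/S_{n+1}$ with fiber $S_a = \C \setminus \{a_1, \ldots, a_{n+1}\}$, whose fundamental group is the free group $F_{n+1}$. The induced monodromy of $B_{n+1}$ on $F_{n+1}$ is, by the classical theorem of Artin, exactly the action written out in subsection \ref{artinfree}. The family over $\mathcal{P}$ is the pullback along $\mathcal{P} \to \mathcal{C}/S_{n+1}$, so its monodromy is simply the restriction of the Artin action to $\pi_1(\mathcal{P}) \subset B_{n+1}$; restricting once more to the subgroup $P_{n+1}$ reproduces the formulas of subsection \ref{artinfree} verbatim. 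The only delicate point is coherent basepoint bookkeeping across the tower $\mathcal{C} \to \mathcal{P} \to \mathcal{C}/S_{n+1}$, so that the standard loops $x_i \in \pi_1(S_a) = F_{n+1}$ are transported compatibly and the embedding $\pi_1(\mathcal{P}) \hookrightarrow B_{n+1}$ identifies the two monodromies on the nose; this is a matter of careful basepoint choices rather than a genuine obstacle.
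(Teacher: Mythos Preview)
The paper does not prove this theorem; it is stated as a classical result of Artin and used without further justification. Your covering-space argument is correct and supplies precisely the standard justification the paper elides: identifying $\mathcal{P}$ with the intermediate quotient $\mathcal{C}/S$ in the tower $\mathcal{C} \to \mathcal{C}/S_{n+1}$, and then reading off the monodromy on $\pi_1(S_a)\simeq F_{n+1}$ as the restriction of the classical Artin action of $B_{n+1}$ on $F_{n+1}$ recalled in subsection \ref{artinfree}.
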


Consequently, the monodromy action on the fibre of the fibration
\[\{(y,x,a)\in    \C^*\times   \C\times   {\mathcal    C}:   y^d=\prod
(x-a_i)^{k_i} \] over ${\mathcal C}$  is the usual action of $P_{n+1}$
on  the   subgroup  $K_{n+1}(d)\simeq  \pi   _1(X_{a,k})$;  therefore,
$P_{n+1}$ acts on the first homology $K_{n+1}(k,d)^{ab}$ of $X_{a,k}$,
and  this gives  the monodromy  action. Therefore,  the first  part of
Proposition   \ref{monodromyandgassner} follows   from  (the   Hurewicz
Theorem and) Theorem \ref{K_0homology}.

\subsection{The  Compactification  of  $X_{a,k}$} 

Denote by  $X_{a,k}^*$ the  compactification of the  affine $X_{a,k}$;
$X_{a,k}$ is  a compact Riemann surface with  finitely many punctures;
hence $X_{a,k}^*$ is  a smooth projective curve obtained  by filing in
these punctures.

Now the  covering map $X_{a,k} \ra  S_a$ is such  that these punctures
lie  over the  points $a_i$  or  else over  the point  at infinity  of
$S_a$. If a  puncture lies over some $a_i$, then the  image of a small
loop  around the  puncture in  $F_{n+1}$  is $x_i^d$  (since $k_i$  is
coprime to $d$);  if the puncture lies above  infinity, then the image
of  a small  loop  around the  puncture  in $F_{n+1}$  is  a power  of
$x_1x_2\cdots x_{n+1}$  the loop  around infinity; therefore,  such an
element is invariant under the action of the braid group.\\

The  mapping of $\pi  _1(X_{a,k})\ra \pi  _1(X_{a,k}^*)$ is  such that
these  loops around  the punctures  generate  the kernel,  by the  van
Kampen theorem; consequently,  the elements $x_1^d, \cdots, x_{n+1}^d$
map  to zero  in  $H_1(X_{a,k}^*,\Z)$ and  the element  $(x_1x_2\cdots
x_{n+1})^{d/r}$  maps to  $0$.   Therefore, the  fundamental group  of
$X_{a,k}^*$  may  be  identified   with  the  quotient  $Q_0(k,d)$  of
Corollary   \ref{Q_0homology}.    Therefore,   the  second   part   of
Proposition     \ref{monodromyandgassner} follows     from    Corollary
\ref{Q_0homology}. \\

The arithmeticity of the monodromy (Theorem \ref{cyclicmonodromy}) now
follows from Corollary  \ref{homologyarithmetic} since the homology of
$X_{a,k}^*$ is the homology of $Q_0(k,d)$. \\

\noindent {\bf  Acknowledgements:} I am  very grateful to  Madhav Nori
for the crucial remark that  an earlier proof for the arithmeticity of
the image of  the Burau representation would also  go through for the
Gassner representation). \\

The  support of the  JC Bose  fellowship for  the period  2008-2013 is
gratefully acknowledged.


\begin{thebibliography}{JPSH}


\bibitem{Abd}  M.  Abdulrahim,  Complex  specialisations  of  the
reduced   Gassner   representation    of   the   pure   braid   group,
Proc. Amer. Math.Soc, {\bf 125} (1997), no. 6, 1617-1624.\\

\bibitem{Abd2}  M. Abdurahim, A  faithfulness criterion  for the
Gassner representation of the pure  braid group, Proc. Amer. Math. Soc
{\bf 125}, (1997), 1249-1257.  \\

\bibitem{A'C}  N.    A'Campo,  Tresses,  monodromie   et  groupes
symplectique, Comment. Math. Helvetici, {\bf 54} (1987), 318-327. \\

\bibitem{Ba-Mi-Se}  H.Bass,   J.Milnor  and  J-P.   Serre,
Solution of  the Congruence subgroup  problem for $SL_n $  ($n\geq 3$)
and $Sp_{2n}$ ($n\geq 2$), Publ. IHES, {\bf 33}, (1967), 59-137. \\

\bibitem{Beu-Hec} F.Beukers and G. Heckman, Monodromy for the
hypergeometric  function $nF_{n-1}$, Invent.   Math. {\bf  95} (1989),
325-354. \\

\bibitem{Bir} J.Birman,  Braids, links and  mapping class groups,
{\bf 82}, Annals of Math. Studies, Princeton University Press, 1974.\\

\bibitem{Bor-Ti}   A.Borel  and  J.Tits,   Groupes  reductifs,
Publ. IHES.{\bf 27}, (1965), 55-150.\\

\bibitem{Del-Mos}  P.Deligne and  G.D.  Mostow,  Monodromy of
hypergeometric   functions   and   non-lattice   integral   monodromy,
Publ.Math. IHES {\bf 63}, (1986), 5-89. \\

\bibitem{Gr-Sch} P.Griffiths and W.Schmid, Recent deveopements
in  Hodge theory:  a discussion  of techniques  and  results, Discrete
subgroups of Lie groups  and applications to moduli (Internat. Colloq,
Bombay 1973), pp 31-127 Oxford University Press, Bombay 1975. \\

\bibitem{Gru-Lub}  F.Grunewald and A.Lubotzky,  Linear representations
of the automorphism group of a  free group, GAFA {\bf 18} (2009), no.5
1564-1608. \\

\bibitem{KM}  M.Kapovich   and  J.Millson,  Quantization   of  bending
deformations of  polygons in  $E^3$, hypergeometric integrals  and the
Gassner representation, Canad.Math.Bull. {\bf 44} (2001), {\bf no. 4},
36-60. \\

\bibitem{Long}  D.D.Long,  On  linear representations  of  braid
groups, Transactions of the A.M.S. Vol 311, No.2, (1989), 535-560. \\

\bibitem{Looi}   E.  Looijenga,  Uniformization   by  Lauricella
functions,  an  overview of  the  theory  of Deligne-Mostow,  207-244,
Progr.Math.{\bf 260} Birkhauser, Basel (2007). \\

\bibitem{Looi2}  E.  Looijenga, Prym  Representations  of the  Mapping
Class Groups, Geom. Dedicata {\bf 64} (1997), no.1, 60-83. \\

\bibitem{Mar} G.A.Margulis, Discrete subgroups of semi-simple Lie
groups, Ergebnisse der Mathematik und ihrer Grenzgebiete 3.Folge. Band
{\bf 17}, Springer-Verlag (1991).\\

\bibitem{Mc}   C.  Mcmullen,  Braid   groups  and   Hodge  theory,
Math.Annalen, Online First, 27th March, (2012).
http:// dx.doi.org/10.1007/s00208-012-0004-2 .  \\

\bibitem{Mos}  G.D. Mostow,  Generalised Picard  lattices arising
from  half-integral  conditions, Publ.Math.   IHES,  {\bf 63}  (1986),
91-106. \\

\bibitem{Nor}   M.V.Nori,   A   nonarithmetic  monodromy   group,
C.R.Acad.Aci, Paris, Ser I, math.302 (1986), no. 2, 71-72.\\

\bibitem{Ra}   M.S.   Raghunathan,  A   note  on   generators  for
arithmetic   subgroups  of  algebraic   groups,  Pacific   Journal  of
Mathematics, {\bf 152} (1991), 365-373. \\


\bibitem{Sar} P.  Sarnak, Notes on  thin groups, MSRI  hot topics
workshop, Feb 2012). \\


\bibitem{Sq}  C.Squier,  The   Burau  representation  is  unitary,
Proc. Amer. Math. Soc. {\bf 90} (1984), 199-202. \\


\bibitem{Ti}  J.   Tits,  Syst'eme`s  generateurs  de  groupes  de
congruence, C.R.Acad. Sci. Paris, Serie A {\bf 283} (1976), 693. \\


\bibitem{Va} L. Vaserstein,  The structure of classical arithmetic
groups  of rank  greater than  1 (English  Translation),  Math.  USSR,
Sbornik {\bf 20} (1973), 465-492. \\
 

\bibitem{Ve}  T.N.Venkataramana,  On  systems  of  generators  for
arithmetic subgroups  of higher rank groups, Pacific  Journal of Math.
{\bf 166} no.1 (1994), 193-212. \\

\bibitem{Ve2}  T.N.Venkataramana,   Arithmeticity  of  the  Burau
representation at roots of unity, (to appear in Annals of Mathematics).

\end{thebibliography}
\end{document}